\theoremstyle{definition}
\newtheorem{thm}{{\bf Theorem}}[section]
\newtheorem{lem}[thm]{{\bf Lemma}}
\newtheorem{prop}[thm]{{\bf Proposition}}
\newtheorem{conj}[thm]{{\bf Conjecture}}
\newtheorem{dfn}[thm]{{\bf Definition}}
\newtheorem{atte}[thm]{{\bf Remark}}
\newcounter{Exami}
\title{On compactifications of character varieties of $n$-punctured projective line }
\author{Arata Komyo}
\address{Department of Mathematics, Graduate School of Science, Kobe University, 1-1 Rokkodai-cho, Nada-ku, Kobe, 657-8501, Japan}
\email{akomyo@math.kobe-u.ac.jp}
\begin{document}

\maketitle

\begin{abstract}
In this paper, we construct compactifications of $SL_2(\mathbb{C})$-character varieties of $n$-punctured projective line and study the boundary divisor of the compactifications.
This study is motivated by the conjecture for the configuration of the boundary divisor, due to C. Simpson.
We verify the conjecture for a few examples. 
\end{abstract} 

\section{Introduction}

Let $C$ be a compact Riemann surface of genus $g$, and let $\{ t_1,\ldots,t_n \}$ be the set of $n$-distinct points on $C$.
For a positive integer $r>0$, denote by $\mathcal{P}_r$ the set of partitions of $r$,
and fix $\boldsymbol{\mu}=(\mu^1,\ldots,\mu^n) \in (\mathcal{P}_r)^n$ where $\mu^i=(\mu^i_1,\ldots, \mu^i_{r_i}) \in \mathcal{P}_r$.
For each partition $\mu^i\in \mathcal{P}_r$, let us fix semisimple conjugacy classes $\mathcal{C}_1,\ldots,\mathcal{C}_n \subset SL_r(\mathbb{C})$ which is generic in the sense of \cite[Definition 2.1.1]{HLR}   
and type $\mu^1,\ldots,\mu^n$, that is, 
the multiplicities of eigenvalues of matrices in $\mathcal{C}_i$ are given by $\mu^i=( \mu^i_1,\mu^i_2,\ldots )$.
We consider a monodoromy $SL_r({\mathbb{C}})$-semisimple representation 
\begin{equation*}
\rho : \pi_1(C\setminus \{ t_1,\ldots,t_n\},*) \longrightarrow SL_r(\mathbb{C})
\end{equation*}
of type $(g,\boldsymbol{\mu})$ which satisfies the condition $\rho (\gamma_i) \in \mathcal{C}_i$ for each $i$ where $\gamma_i$ is a anticlockwise loop around the point $t_i$.
We can define the $SL_r(\mathbb{C})$-\textit{character variety} $\mathcal{R}_{g,\boldsymbol{\mu}}$ of the $n$-punctured compact Riemann surface of genus $g$ by the following categorical quotient
\begin{align*}
\mathcal{R}_{g, \boldsymbol{\mu}}:= \{ (A_1,B_1, &\ldots, A_g,B_g; M_1, \ldots ,M_n) \in SL_r(\mathbb{C})^{2g} \times \mathcal{C}_1 \times \cdots \times \mathcal{C}_n  \\
& \mid (A_1,B_1) \cdots (A_g,B_g)M_1 \cdots M_n =I_r\}/\!/SL_r(\mathbb{C}).
\end{align*}
Here, we set $(A,B)=ABA^{-1}B^{-1}$ and $I_r$ is the identity matrix.
The variety depends on the actual choice of eigenvalues, but for simplicity we drop this choice from the notation.
The categorical quotient $\mathcal{R}_{g,\boldsymbol{\mu}}$ can be considered as a moduli space of 
monodoromy $SL_r({\mathbb{C}})$-semisimple representations of type $(g,\boldsymbol{\mu})$.
The variety $\mathcal{R}_{g,\boldsymbol{\mu}}$, if nonempty, is a nonsingular affine variety of dimension 
\begin{equation*}
d_{g,\boldsymbol{\mu}}:= r^2(2g-2+n) - \sum_{i,j}(\mu^i_j)^2+2-2g.
\end{equation*}
(See \cite{HLR}). 
In the case where $g=0$ and $d_{g,\boldsymbol{\mu}}=2$, $SL_r(\mathbb{C})$-character varieties can be classified into four cases, which can be listed as follows:
\begin{equation}\label{dim2 list}
\begin{aligned}
\boldsymbol{\mu}&=((1,1),(1,1),(1,1),(1,1)) \\
\boldsymbol{\mu}&=((1,1,1),(1,1,1),(1,1,1)) \\
\boldsymbol{\mu}&=((2,2),(1,1,1,1),(1,1,1,1))\\
\boldsymbol{\mu}&=((3,3),(2,2,2),(1,1,1,1,1,1)).
\end{aligned}
\end{equation}
In the first and second types, the $SL_r(\mathbb{C})$-character varieties are known to be an affine cubic surface. (\cite{FK}, \cite{Jim}, \cite{Iwa}, \cite{Lawton}).

The purpose of this paper is to study the configuration of boundary divisor of compactifications of $SL_r(\mathbb{C})$-character varieties. 
This study is motivated by a conjecture due to Simpson \cite{Sim3}, which is explained as follows.
We choose a smooth compactification $\overline{\mathcal{R}_{g,\boldsymbol{\mu}}}$ of $\mathcal{R}_{g,\boldsymbol{\mu}}$ 
such that $D^{{\it B}}_{g,\boldsymbol{\mu}} = \overline{\mathcal{R}_{g,\boldsymbol{\mu}}} \setminus \mathcal{R}_{g,\boldsymbol{\mu}}$ is a divisor with normal crossings.
We call the divisor $D^{{\it B}}_{g,\boldsymbol{\mu}}$ a \textit{boundary divisor} of the compactification $\overline{\mathcal{R}_{g,\boldsymbol{\mu}}}$.
Let $\overline{N}^{{\it B}}_{g, \boldsymbol{\mu}}$ be a small neighborhood of $D^{{\it B}}_{g, \boldsymbol{\mu}}$ in $\overline{\mathcal{R}_{g,\boldsymbol{\mu}}}$, 
and let $N^{{\it B}}_{g, \boldsymbol{\mu}} = \overline{N}^{{\it B}}_{g, \boldsymbol{\mu}}\cap \mathcal{R}_{g, \boldsymbol{\mu}}=\overline{N}^{{\it B}}_{g, \boldsymbol{\mu}} \setminus D^{{\it B}}_{g, \boldsymbol{\mu}}$.
Let $\Delta( D^{{\it B}}_{g,\boldsymbol{\mu}})$ be
a simplicial complex whose $n$-dimensional simplices correspond to the irreducible components of intersections of $k+1$ distinct components of $D^{{\it B}}_{g, \boldsymbol{\mu}}$.
This is called the \textit{boundary complex} or \textit{Stepanov complex} of a compactification of $\mathcal{R}_{g,\boldsymbol{\mu}}$ (see \cite{Ste}, \cite{Thu}, and \cite{Pay}).
\begin{thm}[{\cite{Ste}, \cite{Thu}, and \cite{Pay}}]
\textit{
The homotopy type of boundary complex $\Delta( D^{{\it B}}_{g,\boldsymbol{\mu}})$ is independent of the choice of compactifications.
}
\end{thm}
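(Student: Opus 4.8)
The plan is to reduce the statement to a local combinatorial lemma by means of the Weak Factorization Theorem, following the strategy of Stepanov and Payne. Write $U := \mathcal{R}_{g,\boldsymbol{\mu}}$, and let $(X_1,D_1)$ and $(X_2,D_2)$ be two smooth proper compactifications with normal crossing boundary, so that $X_a \setminus D_a = U$ for $a = 1,2$. Since each $X_a$ contains $U$ as the same open subvariety, the induced birational map $X_1 \dashrightarrow X_2$ is an isomorphism over $U$. First I would invoke the strong form of the Weak Factorization Theorem of Abramovich--Karu--Matsuki--W\l odarczyk to produce a chain
\begin{equation*}
X_1 = W_0 \dashrightarrow W_1 \dashrightarrow \cdots \dashrightarrow W_m = X_2
\end{equation*}
of smooth proper compactifications $W_j$ of $U$ in which each arrow is a blow-up or a blow-down, each arrow is an isomorphism over $U$, and --- crucially --- the boundary $E_j := W_j \setminus U$ is again a normal crossing divisor and every center of blow-up is a smooth subvariety \emph{contained in the boundary} that has normal crossings with it. Because homotopy equivalences compose, it then suffices to treat a single elementary step.

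Thus I would reduce to the following assertion. Let $(X,D)$ be a smooth proper compactification of $U$ with $D$ a normal crossing divisor, let $Z \subseteq D$ be a smooth irreducible center having normal crossings with $D$, let $\pi : X' \to X$ be the blow-up along $Z$ with exceptional divisor $E$, and set $D' := \pi^{-1}(D)_{\mathrm{red}} = \widetilde{D} + E$, where $\widetilde{D}$ is the strict transform. Then $X' \setminus D' = U$ as well, and I claim $\Delta(D')$ is homotopy equivalent to $\Delta(D)$. To analyze this I would work locally around $Z$: choosing coordinates in which $D$ is a union of coordinate hyperplanes and $Z$ is a coordinate subspace, one reads off which boundary components $D_{i_1},\ldots,D_{i_q}$ contain $Z$, and hence which simplices $\sigma_J$ of $\Delta(D)$ (those with $\{i_1,\ldots,i_q\} \subseteq J$) are affected by the blow-up.

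The combinatorial heart of the argument --- and the step I expect to be the main obstacle --- is the precise description of $\Delta(D')$ in terms of $\Delta(D)$, which is Stepanov's Lemma. The new component $E$ contributes a vertex $v_E$, and the effect on the complex is a stellar-type subdivision: the subcomplex of simplices $\sigma_J$ with $Z \subseteq D_J$ is replaced by its star subdivision at $v_E$, while the remaining simplices are untouched. When $Z$ is itself a closed stratum $D_I$, this modification is an honest subdivision of the geometric realization, so $|\Delta(D')|$ and $|\Delta(D)|$ are even homeomorphic; in the general case the modification is only guaranteed to be a homotopy equivalence, since $E$ may introduce faces that correspond to no stratum of $D$. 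The delicate point is to match each irreducible component of a multiple intersection $E \cap D_{j_1} \cap \cdots \cap D_{j_k}$ with a face produced by the subdivision, using the normal crossing hypothesis on $Z$ to control these components; carrying out this local-to-global bookkeeping for arbitrary admissible centers is the real content of the proof.

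An alternative route, which bypasses weak factorization, is Thuillier's: one identifies $|\Delta(D)|$ up to homotopy with the skeleton of the pair $(X,D)$ sitting inside the Berkovich analytification $U^{\mathrm{an}}$ of $U$ over the trivially valued field $\mathbb{C}$, together with a canonical deformation retraction of a model-independent subspace of $U^{\mathrm{an}}$ onto this skeleton. Since $U^{\mathrm{an}}$ and the retraction are intrinsic to $U$, independence of the compactification is immediate. Here the obstacle is of a different, analytic nature --- constructing the retraction and proving that the skeleton of any normal crossing compactification is a deformation retract of the same intrinsic space --- so I would present the weak-factorization argument as the main line and only indicate the Berkovich interpretation as a cross-check.
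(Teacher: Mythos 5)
Your proposal is correct and follows exactly the strategy of the sources the paper cites for this theorem (Stepanov and Payne via weak factorization plus Stepanov's lemma on admissible blow-ups, with Thuillier's Berkovich skeleton as the alternative); the paper itself gives no proof and simply refers to \cite{Ste}, \cite{Thu}, and \cite{Pay}. Your identification of the combinatorial heart --- the star-subdivision description of $\Delta(D')$ and the bookkeeping of irreducible components of $E\cap D_{j_1}\cap\cdots\cap D_{j_k}$ --- is precisely where the real content lies in those references, so the outline is faithful to the standard argument.
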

We have a continuous map, well-defined up to homotopy,
\begin{equation}
N^{{\it B}}_{g, \boldsymbol{\mu}} \longrightarrow \Delta( D^{{\it B}}_{g,\boldsymbol{\mu}}).
\end{equation}
On the other hand, let $\mathcal{M}_{g, \boldsymbol{\mu}}$ be the moduli space of parabolic Higgs bundles, 
which is diffeomorphic to the character variety $\mathcal{R}_{g, \boldsymbol{\mu}}$ via the non-abelian Hodge theory \cite{Sim90}.
In particular, we have $\dim \mathcal{M}_{g, \boldsymbol{\mu}}=d_{g,\boldsymbol{\mu}}$.
We have the Hitchin fibration $\mathcal{M}_{g, \boldsymbol{\mu}} \rightarrow \mathbb{A}^{\frac{d_{g,\boldsymbol{\mu}}}{2}}$.
The moduli space $\mathcal{M}_{g, \boldsymbol{\mu}}$ has a canonical orbifold compactification, where the divisor at infinity is the quotient
\begin{equation*}
D^{{\it Dol}}_{g, \boldsymbol{\mu}} := \mathcal{M}_{g, \boldsymbol{\mu}}^*/\mathbb{C}^*.
\end{equation*}
Here, $\mathcal{M}_{g, \boldsymbol{\mu}}^*$ is the complement of the nilpotent cone.
Let $\overline{N}^{{\it Dol}}_{g, \boldsymbol{\mu}}$ be a small neighborhood of $D^{{\it Dol}}_{g, \boldsymbol{\mu}}$, 
and let $N^{{\it Dol}}_{g, \boldsymbol{\mu}} = \overline{N}^{{\it Dol}}_{g, \boldsymbol{\mu}}\cap \mathcal{R}_{g, \boldsymbol{\mu}}=\overline{N}^{{\it Dol}}_{g, \boldsymbol{\mu}} \setminus D^{{\it Dol}}_{g, \boldsymbol{\mu}}$.
The Hitchin fibration gives us a continuous map to the sphere at infinity in the Hitchin base
\begin{equation}
N^{{\it Dol}}_{g, \boldsymbol{\mu}} \longrightarrow S^{d_{g,\boldsymbol{\mu}} -1}.
\end{equation}
\begin{conj}[\cite{Sim3}]\label{conj1}
\begin{enumerate}
\item \textit{There exists a homotopy-commutative diagram}
\begin{equation*}
\begin{CD}
N^{{\it Dol}}_{g, \boldsymbol{\mu}} @>{\cong}>> N^{{\it B}}_{g, \boldsymbol{\mu}}  \\
@VVV @VVV \\
S^{d_{g,\boldsymbol{\mu}} -1} @>{\cong}>>\Delta( D^{{\it B}}_{g,\boldsymbol{\mu}}).
\end{CD}
\end{equation*}
\item \textit{
In particular,
there exists a non-singular compactification of $\mathcal{R}_{g, \boldsymbol{\mu}}$ such that the boundary complex is a simplicial decomposition of sphere $S^{d_{g,\boldsymbol{\mu}} -1}$.
}
\end{enumerate}
\end{conj}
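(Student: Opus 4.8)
The plan is to verify Conjecture \ref{conj1} for the four families appearing in the dimension-two list above, where $d_{g,\boldsymbol{\mu}}=2$, the Hitchin base is $\mathbb{A}^{1}$, and the target sphere is simply $S^{d_{g,\boldsymbol{\mu}}-1}=S^{1}$. In each of these cases $\mathcal{R}_{g,\boldsymbol{\mu}}$ is an affine surface — for the first two types an affine cubic surface in $\mathbb{C}^{3}$ — so part (2) of the conjecture reduces to producing a smooth normal-crossings compactification whose boundary complex is homotopy equivalent to a circle. First I would realize each character variety explicitly as the affine variety cut out by a single Fricke-type equation $f(x,y,z)=0$ in $\mathbb{C}^{3}$, using the trace-coordinate descriptions available in \cite{FK}, \cite{Jim}, \cite{Iwa}, and \cite{Lawton}, recording the dependence of $f$ on the eigenvalue data of the conjugacy classes $\mathcal{C}_{i}$.

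Next I would take the projective closure $\overline{S}\subset\mathbb{P}^{3}$ and analyze the divisor at infinity $\overline{S}\cap\{w=0\}$, which is the plane cubic curve defined by the leading degree-three part of $f$. For the relevant Fricke cubics this curve degenerates into a \emph{triangle of lines}, i.e.\ a cycle of three rational components meeting pairwise in nodes. I would then resolve the singularities of $\overline{S}$ and perform further blow-ups so that the total boundary $D^{{\it B}}_{g,\boldsymbol{\mu}}=\overline{\mathcal{R}_{g,\boldsymbol{\mu}}}\setminus\mathcal{R}_{g,\boldsymbol{\mu}}$ becomes a divisor with simple normal crossings, carefully tracking which exceptional components are adjoined to the boundary and which lie over interior points and so must be discarded. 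The expected outcome is that $D^{{\it B}}_{g,\boldsymbol{\mu}}$ is a cyclic chain (a \emph{necklace}) of smooth rational curves meeting transversally.

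The core computation is then combinatorial: the dual, or Stepanov, complex $\Delta(D^{{\it B}}_{g,\boldsymbol{\mu}})$ of a cyclic chain of $N$ rational curves is an $N$-gon, whose underlying space is homotopy equivalent to $S^{1}=S^{d_{g,\boldsymbol{\mu}}-1}$. By the homotopy-invariance theorem of Stepanov, Thuillier, and Payne stated above, this homotopy type is independent of the chosen compactification, which settles part (2). For part (1) I would identify the Hitchin base direction on the Dolbeault side, describe the degeneration of the parabolic Higgs bundles as one runs to infinity in $\mathcal{M}^{*}_{g,\boldsymbol{\mu}}$ along the $\mathbb{C}^{*}$-action, and match the induced map $N^{{\it Dol}}_{g,\boldsymbol{\mu}}\to S^{1}$ with the retraction $N^{{\it B}}_{g,\boldsymbol{\mu}}\to\Delta(D^{{\it B}}_{g,\boldsymbol{\mu}})$ through the non-abelian Hodge correspondence of \cite{Sim90}.

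The main obstacle is precisely this last matching step: establishing the \emph{homotopy-commutativity} of the square requires controlling the asymptotic behavior of the non-abelian Hodge correspondence near infinity, relating the Betti-side boundary geometry to the Dolbeault-side scaling action, and this is the genuinely hard part since it bridges the two pictures rather than working within either. The purely Betti-side statement, part (2), is comparatively accessible once the normal-crossings resolution is in hand; there the residual delicacy is bookkeeping, namely ensuring that no stray exceptional divisor breaks the cyclic pattern or contributes a higher-dimensional simplex that would collapse the circle. I expect the four families to require separate but parallel treatments, with the two non-cubic cases handled by the same projective-closure-and-resolution scheme applied to their respective defining equations.
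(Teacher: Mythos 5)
The statement you are asked to prove is a \emph{conjecture} of Simpson, and the paper does not prove it in full either: its main theorem only verifies assertion (2) in two specific cases, namely $g=0$, $r=3$, $n=3$, $\boldsymbol{\mu}=((1,1,1),(1,1,1),(1,1,1))$ with $d_{g,\boldsymbol{\mu}}=2$, and $g=0$, $r=2$, $n=5$ with $d_{g,\boldsymbol{\mu}}=4$. Measured against that, your proposal has two genuine gaps. First, you restrict attention entirely to the $d_{g,\boldsymbol{\mu}}=2$ list, where the target is $S^1$. For the first two types in that list your plan coincides with the paper's Section 3: use the Fricke--Klein and Lawton trace coordinates to realize the character variety as an affine cubic in $\mathbb{C}^3$, take the closure in $\mathbb{P}^3$, and read off the curve at infinity ($XYZ=0$, a triangle of lines, in the first case; the nodal cubic $X^3+Y^3-XYZ=0$ in the second), whose dual complex is $S^1$. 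But for the third and fourth types, $\boldsymbol{\mu}=((2,2),(1,1,1,1),(1,1,1,1))$ and $((3,3),(2,2,2),(1,1,1,1,1,1))$, no Fricke-type cubic equation is available in the cited literature, so the sentence ``the same projective-closure-and-resolution scheme applied to their respective defining equations'' has no defining equations to apply itself to; the paper does not treat these cases either, precisely because the classical invariant theory does not extend.

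Second, and more importantly, you miss the case that constitutes the actual content of the paper: $g=0$, $r=2$, $n=5$, where $d_{g,\boldsymbol{\mu}}=4$ and the boundary complex must be a simplicial decomposition of $S^3$. Here the character variety is four-dimensional, there is no single defining equation in $\mathbb{C}^3$, and the paper proceeds by an entirely different route: it compactifies the representation variety $\mathrm{Rep}_{n,\boldsymbol{k}}$ inside $(\mathbb{P}^4)^{n-1}$ following Benjamin, works out the Hilbert--Mumford (semi)stability in terms of the combinatorics of nilpotent boundary matrices, performs a Kirwan-type partial desingularization by blowing up the orbits of the two properly semistable points, takes the GIT quotient, blows up the reducible intersections $E_{1,3}^{\pm}$, $E_{2,4}^{\pm}$, and then verifies by a case-by-case analysis of all multiple intersections of the ten boundary components that the resulting dual complex triangulates $S^3$. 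None of this is anticipated by your projective-closure scheme, and the bookkeeping you describe as ``comparatively accessible'' is exactly where the difficulty concentrates in dimension four. Your honest assessment that part (1) (the homotopy-commutativity with the Hitchin fibration) is the hard bridging step is consistent with the paper, which also does not prove part (1) and only records that it is known in the first case of the list.
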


\begin{atte}[See \cite{Sim3}]
The assertion (1) of Conjecture \ref{conj1} is true in the first case of the list (\ref{dim2 list}).
\end{atte}

The main theorem of this paper is the following
\begin{thm}[Theorem \ref{thm}]\label{thm intro}
\textit{The assertion} (2) {of Conjecture} \ref{conj1} \textit{is true in the following cases}: 
\begin{enumerate}
\item $g=0,r=3,n=3,\boldsymbol{\mu}=((1,1,1),(1,1,1),(1,1,1)),\ d_{g,\boldsymbol{\mu}}=2;$
\item $g=0,r=2,n=5,\boldsymbol{\mu}=((1,1),(1,1),(1,1),(1,1),(1,1))\ d_{g,\boldsymbol{\mu}}=4.$
\end{enumerate}
\end{thm}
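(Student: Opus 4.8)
The plan is, in each of the two cases, to exhibit an explicit smooth projective compactification $\overline{\mathcal{R}_{0,\boldsymbol{\mu}}}$ whose boundary divisor $D^{B}_{0,\boldsymbol{\mu}}=\overline{\mathcal{R}_{0,\boldsymbol{\mu}}}\setminus\mathcal{R}_{0,\boldsymbol{\mu}}$ has simple normal crossings, and then to read the boundary complex $\Delta(D^{B}_{0,\boldsymbol{\mu}})$ off from the combinatorics of the boundary strata and check that it is a simplicial decomposition of $S^{d_{0,\boldsymbol{\mu}}-1}$. This is the expected outcome: for generic semisimple $\mathcal{C}_i$ the variety $\mathcal{R}_{0,\boldsymbol{\mu}}$ is smooth and carries an algebraic symplectic form whose top power is a nowhere-vanishing holomorphic volume form, so $K_{\mathcal{R}_{0,\boldsymbol{\mu}}}$ is trivial and $\mathcal{R}_{0,\boldsymbol{\mu}}$ is log Calabi--Yau. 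The boundary complex of a good (anticanonical) compactification of a log Calabi--Yau variety is conjecturally a sphere of dimension $d_{0,\boldsymbol{\mu}}-1$, and the content of the theorem is to verify this explicitly in these two small cases.

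For case (1), I would use that $\mathcal{R}_{0,\boldsymbol{\mu}}$ is here the affine cubic surface attached to the second entry of the list (\ref{dim2 list}), cut out in trace coordinates $(x,y,z)$ by a cubic whose degree-three part is, after a linear change of coordinates, $xyz$. First I would take the projective closure $\overline{S}\subset\mathbb{P}^3$ and intersect with the plane at infinity $H=\{w=0\}$: since the leading form is $xyz$, the curve at infinity is the triangle of lines $\{x=0\}\cup\{y=0\}\cup\{z=0\}$ inside $H$. The closure is singular at the three vertices of this triangle, so next I would blow up until $\overline{S}\setminus\mathcal{R}_{0,\boldsymbol{\mu}}$ becomes a cycle of smooth rational curves meeting pairwise transversally, with no triple points. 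Its dual complex is then a triangulated circle, giving $\Delta(D^{B}_{0,\boldsymbol{\mu}})\simeq S^1=S^{d_{0,\boldsymbol{\mu}}-1}$, which is assertion (2).

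For case (2) I would work with the trace coordinates of the five-punctured sphere. With $M_1\cdots M_5=I_2$ and the traces $\operatorname{tr}(M_i)$ fixed by the $\mathcal{C}_i$, the functions $\operatorname{tr}(M_iM_j)$ together with a minimal set of triple products $\operatorname{tr}(M_iM_jM_k)$ generate the coordinate ring and present $\mathcal{R}_{0,\boldsymbol{\mu}}$ as an explicit affine fourfold. To organise the compactification I would use the fibration obtained by cutting the five-punctured sphere along a separating simple closed curve $\delta$: the trace $t_\delta$ realises $\mathcal{R}_{0,\boldsymbol{\mu}}$, over the $t_\delta$-line, as a family of the case-(1) cubic surfaces together with the one-dimensional Goldman twist flow along $\delta$. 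I would use this to build a smooth model compatible fibrewise with the compactification of case (1), resolve the remaining non-normal-crossing loci at infinity, and then enumerate the boundary divisors together with their multiple intersections to identify $\Delta(D^{B}_{0,\boldsymbol{\mu}})$ with a triangulation of $S^3=S^{d_{0,\boldsymbol{\mu}}-1}$.

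The essential difficulty is case (2). In dimension two the boundary complex is just a graph and the triangle-of-lines picture makes the homotopy type transparent; in dimension four one must instead (i) replace the naive closure, which is singular and non-normal-crossing along the locus at infinity, by a genuine simple normal crossing model while keeping control of the homotopy type of the dual complex, and (ii) carry out the full bookkeeping of the two-, three-, and four-fold intersections of the boundary divisors to confirm that the resulting three-dimensional complex triangulates $S^3$ and not merely some other closed three-manifold or a homotopy sphere. The naive action-angle description coming from a pants decomposition is of little help here, since each factor $\mathbb{A}^1$ contributes a contractible piece and so does not see the spherical boundary; the genuine log Calabi--Yau compactification differs from the toric one precisely along the divisors that produce the sphere, and pinning these down is where I expect the real work to lie.
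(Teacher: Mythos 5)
There are two genuine problems. First, in case (1) your key computational claim is false: for $g=0$, $r=3$, $n=3$ the character variety is the cubic surface $x_3^2-fx_3+g=0$ with $f=x_1x_2-a_2b_1x_1-a_1b_2x_2+\cdots$ and $g=x_1^3+x_2^3+\cdots$, so the degree-three part is $x_1^3+x_2^3-x_1x_2x_3$. This form is \emph{not} projectively equivalent to $xyz$ (it has no linear factor), and the curve at infinity is an irreducible plane cubic with one node, not a triangle of three lines. You have substituted the boundary of the Fricke--Klein cubic (the $r=2$, $n=4$ case, where the leading form really is $x_1x_2x_3$) for that of the $r=3$, $n=3$ case. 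The conclusion $\Delta(D^{B}_{0,\boldsymbol{\mu}})\simeq S^1$ happens to survive, because the dual complex of an irreducible nodal curve is one vertex with one loop-edge, again a circle; but the combinatorics is entirely different from what you describe, and the blow-ups you propose at ``the three vertices of the triangle'' do not correspond to anything in the actual geometry.

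Second, and more seriously, for case (2) you have only stated a plan, not a proof: the entire content of the theorem here is the explicit construction of a normal crossing compactification and the enumeration of its boundary strata, and none of that is carried out. You do not produce the boundary divisors, do not determine which double, triple and quadruple intersections are nonempty or irreducible, and do not verify that the resulting $3$-complex is $S^3$; you yourself flag this as ``where the real work lies.'' The paper's route is also quite different from the one you sketch: it compactifies the representation variety by closing each conjugacy class inside $\mathbb{P}(M_3(\mathbb{C}))$ (so each $\overline{\mathcal{C}_i}\cong\mathbb{P}^1\times\mathbb{P}^1$), analyzes stability via the Hilbert--Mumford criterion, performs a Kirwan blow-up along the two closed orbits of properly semistable points, and then blows up the two \emph{reducible} intersections $E_1\cap E_3$ and $E_2\cap E_4$; the resulting ten boundary components ($E_1,\ldots,E_4$, the two Kirwan exceptional divisors, and four further exceptional divisors) are shown by direct computation to form an octahedron-like triangulation of $S^2$ suspended by the two Kirwan divisors into $S^3$. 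Your pants-decomposition/Goldman-flow fibration could conceivably lead to an alternative construction, but as written it gives no mechanism for detecting the reducibility of certain boundary intersections, which is exactly the phenomenon that forces the extra blow-ups and produces the correct vertex count in the paper's argument.
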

For the case $(1)$ of Theorem \ref{thm intro}, the assertion (2) of Conjecture \ref{conj1} can be verified by the classical invariant theory. 
(\cite{FK}, \cite{Jim}, \cite{Iwa}, \cite{Lawton}).
However, it seems that the application of the classical invariant theory is difficult for general cases.
Then, we construct compactifications of $SL_r(\mathbb{C})$-character varieties as follows.
Following \cite{abc}, we can construct a compactification of the \textit{representation variety} \cite{abc}
\begin{align*}
\mathrm{Rep}_{g, \boldsymbol{\mu}}:=  \{ (A_1,B_1, \ldots, A_g,B_g;& M_1, \ldots ,M_n) \in SL_r(\mathbb{C})^{2g} \times \mathcal{C}_1 \times \cdots \times \mathcal{C}_n  \\
& \mid (A_1,B_1) \cdots (A_g,B_g)M_1 \cdots M_n =I_r\}.
\end{align*}
Then, we take the GIT quotient of this compactification of $\mathrm{Rep}_{g, \boldsymbol{\mu}}$, 
which gives a compactification $\overline{\mathcal{R}_{g, \boldsymbol{\mu}}}$ of $\mathcal{R}_{g, \boldsymbol{\mu}}$.
As special cases, we consider the case where $g=0,r=2,n\ge4, \boldsymbol{\mu}=((1,1),\ldots,(1,1))$. 
For $n=4$, we obtain the same result as the classical invariant theory \cite{FK}.
For $n=5$ (i.e., the case $(2)$ of Theorem \ref{thm intro}), $\overline{\mathcal{R}_{g, \boldsymbol{\mu}}}$ has singular points.
A suitable blowing up of $\overline{\mathcal{R}_{g, \boldsymbol{\mu}}}$ shows that the assertion (2) of Conjecture \ref{conj1} holds.
It seems that the configuration of the boundary divisor $D^{{\it B}}_{0,\boldsymbol{\mu}}$ is rather complicated for $n\ge6$.

Conjecture \ref{conj1} is related to the P=W conjecture due to Hausel et al (\cite{CHM}).
First, we consider compact curve cases.
The non-abelian Hodge theory for compact curves 
states that character varieties $\mathcal{R}$ are diffeomorphic to moduli spaces $\mathcal{M}$ of semi-stable Higgs bundles.
Then, we have the induced isomorphism between the rational cohomology groups of $\,\mathcal{R}$ and $\mathcal{M}$.
The P=W conjecture assert that the isomorphism of the rational cohomology groups exchanges 
the weight filtration on the cohomology groups of $\mathcal{R}$ 
with the perverse Leray filtration associated with the Hitchin fibration on the cohomology groups of $\mathcal{M}$.
The P=W conjecture is verified in the case where $r=2$ (\cite{CHM}).
We may extend the conjecture to punctured curve cases.  
On the other hand, there exists a natural isomorphism from the reduced homology of the boundary complex $\Delta( D^{{\it B}}_{g,\boldsymbol{\mu}})$
 to the $2l$-th graded piece of the weight filtration on the cohomology of $\mathcal{R}_{g, \boldsymbol{\mu}}$:
\begin{equation*}
\widetilde{H}_{i-1} ( \Delta( D^{{\it B}}_{g,\boldsymbol{\mu}}), \mathbb{Q}) \cong Gr^W_{2l} H^{2l-i} ( \mathcal{R}_{g, \boldsymbol{\mu}}, \mathbb{Q} ).
\end{equation*}
(For example, see \cite[Theorem 4.4]{Pay}).
By the isomorphism, the assertion (2) of Conjecture \ref{conj1} implies that there exists only $1$-dimensional weight $2d_{g,\boldsymbol{\mu}}$ part 
in the middle degree $d_{g,\boldsymbol{\mu}}$ cohomology of the character variety,
which is also a consequence of the P=W conjecture.

\begin{atte}
The structure groups of character varieties studied in \cite{CHM} are $GL_n(\mathbb{C})$, $PGL_n(\mathbb{C})$ and $SL_n(\mathbb{C})$. 
However, for $g=0$, those character varieties are the same.
\end{atte}

The organization of this paper is as follows.
In Section 2, we give the definition of a $SL_r(\mathbb{C})$-character variety.
In Section 3, we consider the case where $g=0,r=2, n=4$ and $g=0,r=3,n=3$.
In those cases, the character varieties are describe by invariants and a relation of invariants.
We recall that the character varieties are affine cubic surfaces.
In Section 4, we consider the construction of compactifiations of $SL_2(\mathbb{C})$-character varieties of $g=0, \boldsymbol{\mu}=((1,1),\ldots,(1,1))$.
In Section 5 and 6, we describe the boundary divisor of the compactifiations of the cases where $n=4$ and $n=5$.

\renewcommand{\abstractname}{Acknowledgements}
\begin{abstract}
The author would like to thank Professor Kentaro Mitsui, Professor Masa-Hiko Saito and Professor Carlos Simpson, and  for many comments and discussions.
He thanks Professor Masa-Hiko Saito for warm encouragement.
\end{abstract}

\section{Preliminaries}

We fix integers $g,r,n$ with $g\ge 0, r>0,n>0$, and
let $(C,\boldsymbol{t})=(C,t_1,\ldots,t_n)$ be an $n$-pointed compact Riemann surface of genus $g$, 
which consists of a compact Riemann surface $C$ of genus $g$ and a set of $n$-distinct points $\boldsymbol{t}=\{ t_i\}_{1\le i\le n}$ on $C$.
We put $D(\boldsymbol{t})=t_1 +\cdots + t_n$ for each $(C,\boldsymbol{t})=(C,t_1,\ldots,t_n)$. 
We denote by 
\begin{equation}
\Gamma_{C,\boldsymbol{t}} :=\pi_1(C\setminus D(\boldsymbol{t}),*)
\end{equation}
the fundamental group of $C\setminus D(\boldsymbol{t})$ with the base point $*\in C\setminus D(\boldsymbol{t})$. 
The group $\Gamma_{C,\boldsymbol{t}}$ is generated by $(2g+n)$-element $\alpha_1,\ldots,\alpha_g,\beta_1,\ldots,\beta_g,\gamma_1,\ldots,\gamma_n$ with one relation 
\begin{equation*}
(\alpha_1,\beta_1) \cdots (\alpha_g, \beta_g)\gamma_1 \cdots \gamma_n=1.
\end{equation*}
Here, we set $(\alpha,\beta)=\alpha\beta\alpha^{-1}\beta^{-1}$.
The set of generators $\alpha_1,\ldots,\alpha_g,\beta_1,\ldots,\beta_g,\gamma_1,\ldots,\gamma_n$ is called \textit{canonical generators} of $\Gamma_{C,\boldsymbol{t}}$.
\begin{dfn}
An $SL_r(\mathbb{C})$-representation of the fundamental group $\Gamma_{C,\boldsymbol{t}}$ is a group homomorphism 
\begin{equation}
\rho : \Gamma_{C,\boldsymbol{t}} \longrightarrow SL_r(\mathbb{C}). 
\end{equation}
\end{dfn}
Let $\mathrm{Hom}(\Gamma_{C,\boldsymbol{t}},SL_r(\mathbb{C}))$ be the set of all $SL_r(\mathbb{C})$-representations of $\Gamma_{C,\boldsymbol{t}}$.
If we fix a set of canonical generators of $\Gamma_{C,\boldsymbol{t}}$, we have the identification
\begin{equation*}
\mathrm{Hom}(\Gamma_{C,\boldsymbol{t}},SL_r(\mathbb{C}))\xrightarrow{\simeq}SL_r(\mathbb{C})^{2g+n-1}.
\end{equation*}

\begin{dfn}
Two $SL_r(\mathbb{C})$-representations $\rho_1$ and $\rho_2$ are isomorphic to each other, if and only if there exists a matrix $P\in SL_r(\mathbb{C})$ such that 
\begin{equation*}
\rho_2 (\gamma)=P^{-1} \cdot \rho_1(\gamma) \cdot P \mbox{ for all } \gamma \in \Gamma_{C,\boldsymbol{t}}.
\end{equation*}
\end{dfn}
Let $R^r_{(g,n-1)}$ denote the affine coordinate ring of $SL_r(\mathbb{C})^{2g+n-1}$.
We consider the simultaneous action of $SL_r(\mathbb{C})$ on $SL_r(\mathbb{C})^{2g+n-1}$ as
\begin{align*}
P \curvearrowright &(A_1,\ldots,A_g,B_1,\ldots,B_g;M_1,\ldots,M_{n-1}) \\
&\mapsto (P^{-1}A_1P,\ldots , P^{-1}A_gP,P^{-1}B_1P,\ldots , P^{-1}B_gP;P^{-1}M_1P,\ldots , P^{-1}M_{n-1}P).
\end{align*}
The invariant ring $(R^r_{(g,n-1)})^{Ad(SL_r(\mathbb{C}))}$ is finitely generated.
For any $(C,\boldsymbol{t} )$, there exists the universal categorical quotient map 
\begin{equation*}
\Phi^r_{(C,\boldsymbol{t})} : \mathrm{Hom}(\Gamma_{C,\boldsymbol{t}},SL_r(\mathbb{C})) \cong SL_r(\mathbb{C})^{2g+n-1} \rightarrow 
\mathcal{R}^r_{(C,\boldsymbol{t})} =SL_r(\mathbb{C})^{2g+n-1} /\!/ SL_r(\mathbb{C})
\end{equation*}
where 
\begin{equation*}
\mathcal{R}^r_{(C,\boldsymbol{t})} =\mathrm{Spec}[(R^r_{(g,n-1)})^{Ad(SL_r(\mathbb{C}))}].
\end{equation*}
 The following lemme is due to Simpson.
\begin{lem}[{\cite[Proposition 6.1]{Sim2}}]
\textit{
The closed points of $\mathcal{R}^r_{(C,\boldsymbol{t})}$ represent the Jordan equivalence classes of $SL_r(\mathbb{C})$-representations of $\Gamma_{C,\boldsymbol{t}}$.
}
\end{lem}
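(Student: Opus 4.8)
The plan is to invoke the standard machinery of geometric invariant theory for a reductive group acting on an affine variety. First I would recall the fundamental correspondence: since $SL_r(\mathbb{C})$ is reductive and acts by conjugation on the affine variety $\mathrm{Hom}(\Gamma_{C,\boldsymbol{t}},SL_r(\mathbb{C}))\cong SL_r(\mathbb{C})^{2g+n-1}$, the categorical quotient $\mathcal{R}^r_{(C,\boldsymbol{t})}=\mathrm{Spec}[(R^r_{(g,n-1)})^{Ad(SL_r(\mathbb{C}))}]$ enjoys two properties: its closed points are in bijection with the closed $Ad(SL_r(\mathbb{C}))$-orbits, and each fibre of $\Phi^r_{(C,\boldsymbol{t})}$ contains a unique closed orbit, two representations mapping to the same point precisely when the Zariski closures of their orbits meet. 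Granting this, the lemma reduces to identifying the unique closed orbit in each fibre with a Jordan equivalence class.

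The heart of the argument is to show that the orbit $Ad(SL_r(\mathbb{C}))\cdot\rho$ is closed if and only if $\rho$ is semisimple, and that the unique closed orbit in $\overline{Ad(SL_r(\mathbb{C}))\cdot\rho}$ is the orbit of the semisimplification $\rho^{ss}$. Suppose $\rho$ is reducible but not semisimple, and choose a filtration $0=V_0\subsetneq V_1\subsetneq\cdots\subsetneq V_k=\mathbb{C}^r$ by common $\rho$-invariant subspaces whose graded pieces $V_j/V_{j-1}$ exhaust the Jordan--H\"older factors. Fixing a linear splitting $\mathbb{C}^r\cong\bigoplus_j V_j/V_{j-1}$, I would take the one-parameter subgroup $\lambda\colon\mathbb{C}^*\to SL_r(\mathbb{C})$ acting by $t^{d_j}$ on the $j$-th summand, with weights $d_j$ decreasing along the filtration (and normalised so that $\lambda$ lands in $SL_r$) so that every off-diagonal block of each $\rho(\gamma)$ acquires a strictly positive power of $t$ after conjugation by $\lambda(t)$. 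Then $\lim_{t\to 0}\lambda(t)\rho\lambda(t)^{-1}$ exists in $SL_r(\mathbb{C})^{2g+n-1}$ and equals the block-diagonal representation $\bigoplus_j\mathrm{gr}_j(\rho)$, namely the associated graded. Iterating on the summands until each is irreducible realises $\rho^{ss}$ inside the orbit closure. This degeneration is where the main care is required: one must confirm that the limit is an honest point of $SL_r(\mathbb{C})^{2g+n-1}$---which holds because the block-diagonal part of a filtration-preserving unimodular matrix is again unimodular---and that it reproduces exactly the semisimple representation rather than some other boundary point.

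It then remains to match this with Jordan equivalence, where two representations are declared equivalent when their semisimplifications are isomorphic, equivalently when they share the same multiset of Jordan--H\"older constituents. On one side, the previous step places $\rho$ and $\rho^{ss}$ in a common fibre of $\Phi^r_{(C,\boldsymbol{t})}$, so any two Jordan-equivalent representations, having a common semisimplification, are identified in $\mathcal{R}^r_{(C,\boldsymbol{t})}$. On the other side, a semisimple representation has a closed orbit: by the Hilbert--Mumford criterion no one-parameter subgroup can degenerate it, or more concretely any limit is again a direct sum of the same irreducible constituents and hence conjugate to $\rho^{ss}$ by the Krull--Schmidt uniqueness of the decomposition into irreducibles. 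Thus each fibre contains exactly one semisimple representation up to isomorphism, the common semisimplification, and isomorphism classes of semisimple representations coincide with Jordan equivalence classes of arbitrary representations. Combined with the GIT correspondence recalled in the first paragraph, this yields the desired bijection between the closed points of $\mathcal{R}^r_{(C,\boldsymbol{t})}$ and the Jordan equivalence classes of $SL_r(\mathbb{C})$-representations of $\Gamma_{C,\boldsymbol{t}}$.
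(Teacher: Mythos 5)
Your proposal is correct, but note that the paper gives no proof of this lemma at all—it is quoted verbatim from \cite[Proposition 6.1]{Sim2}—and your argument is exactly the standard one found there: closed points of the affine GIT quotient biject with closed orbits, the closed orbits are precisely those of semisimple representations (obtained by degenerating along a one-parameter subgroup adapted to a Jordan--H\"older filtration, the limit being the associated graded), and Jordan equivalence is the relation of having isomorphic semisimplifications. So your proof is sound and takes essentially the same route as the cited source.
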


Let us set 
\begin{equation*}
\mathcal{A}^{(n)}_r:=\left\{ \boldsymbol{a}=(a^{(i)}_j)^{1\le i \le n}_{1\le j \le r-1} \in \mathbb{C}^{nr-n} \right\}.
\end{equation*}
For $\boldsymbol{a}=(a^{(i)}_j)\in \mathcal{A}^{(n)}_r$, we set
\begin{equation*}
\chi_{i}(s):=s^r+a^{(i)}_{r-1}s^{r-1}+\cdots+a^{(i)}_1s+(-1)^r,\ (i=1,\ldots, n).
\end{equation*}
Moreover, we define the morphism
\begin{equation*}
\phi^r_{(C,\boldsymbol{t})} :  \mathcal{R}^r_{(C,\boldsymbol{t})} \rightarrow \mathcal{A}^{(n)}_r
\end{equation*}
by the relation 
\begin{equation*}
\mathrm{det}(sI_r-\rho (\gamma_i ))=\chi_{i}(s)
\end{equation*}
where $[\rho]\in \mathcal{R}^r_{(C,\boldsymbol{t})}$ and $\gamma_i$ is a anticlockwise loop around the point $t_i$.
The fiber of $\phi^r_{(C,\boldsymbol{t})}$ at $\boldsymbol{a}\in \mathcal{A}^{(n)}_r$ is given by the affine subscheme of $\mathcal{R}^r_{(C,\boldsymbol{t})}$:
\begin{align*}
\mathcal{R}^r_{(C,\boldsymbol{t}),\boldsymbol{a}}:=&\ (\phi^r_{(C,\boldsymbol{t})})^{-1}(\boldsymbol{a}) \\
=&\ \{[\rho]\in \mathcal{R}^r_{(C,\boldsymbol{t})}\mid \mathrm{det}(sI_r-\rho(\gamma_i))=\chi_{i}(s), 1 \le i \le n\}.
\end{align*}
For $\boldsymbol{a} \in \mathcal{A}^{(n)}_r$, 
let $\mu^i=(\mu^i_1,\mu^i_2,\ldots)$ be the partition of $r$ which implies the multiplicity of the solutions of the equation $\chi_i(s)=0$.
Put $\boldsymbol{\mu}=(\mu^1,\ldots,\mu^n)$, called the \textit{multiplicity of $\boldsymbol{a} \in \mathcal{A}^{(n)}_r$}.
Moreover, we define the subvariety 
\begin{equation*}
\mathcal{A}^{(n)}_{r, \boldsymbol{\mu}}:= \left\{ \boldsymbol{a}=(a^{(i)}_j)^{1\le i \le n}_{1\le j \le r-1} \in \mathbb{C}^{nr-n}\ 
\middle|\ \mbox{the multiplicity of $\boldsymbol{a}$ is $\boldsymbol{\mu}$} \right\} 
\subset \mathcal{A}^{(n)}_r.
\end{equation*}
\begin{dfn}
We fix a $k$-tuple $\boldsymbol{\mu}$ of partitions of $r$. 
Let $\boldsymbol{a}$ be a element of $\mathcal{A}^{(n)}_{r, \boldsymbol{\mu}}$.
Then, we define
\begin{align*}
\mathcal{R}^{r,\textit{s}}_{(C,\boldsymbol{t}),\boldsymbol{\mu},\boldsymbol{a}}:=
&\ \{[\rho]\in \mathcal{R}^r_{(C,\boldsymbol{t})}\mid \mathrm{det}(sI_r-\rho(\gamma_i))=\chi_{i}(s), \rho(\gamma_i): \mbox{diagonalizable}, 1 \le i \le n\}\\
=&\ \{ (A_1,B_1, \ldots, A_g,B_g; M_1, \ldots ,M_n) \in SL_r(\mathbb{C})^{2g} \times \mathcal{C}_1 \times \cdots \times \mathcal{C}_n  \\
&\qquad \qquad \quad \mid (A_1,B_1) \cdots (A_g,B_g)M_1 \cdots M_n =I_r\}/\!/SL_r(\mathbb{C})
\end{align*}
where $\mathcal{C}_i=\{  M \in SL_r(\mathbb{C}) \mid \mathrm{det}(sI_r-M)=\chi_{\boldsymbol{a}^{(i)}}(s),\ M:\mbox{diagnalizable} \}$.
In Section 1, we denoted by $\mathcal{R}_{g,\boldsymbol{\mu}}$ the variety instead of $\mathcal{R}^{r,\textit{s}}_{(C,\boldsymbol{t}),\boldsymbol{\mu},\boldsymbol{a}}$, for simplicity.
The affine subvariety $\mathcal{R}^{r,\textit{s}}_{(C,\boldsymbol{t}),\boldsymbol{\mu},\boldsymbol{a}}$ is called a 
$SL_r(\mathbb{C})$-\textit{character variety} of the $n$-punctured compact Riemann surface of genus $g$.
In particular, we denote by $\mathcal{R}^{r}_{n,\boldsymbol{a}}$ this variety in the case where $g=0,\boldsymbol{\mu}=((1,\ldots,1),\ldots,(1,\ldots,1))$.
\end{dfn}

If we take a generic $\boldsymbol{a} \in \mathcal{A}^{(n)}_{r, \boldsymbol{\mu}}$, 
the affine algebraic variety $\mathcal{R}^{r,\textit{s}}_{(C,\boldsymbol{t}),\boldsymbol{\mu},\boldsymbol{a}}$ is a non-singular irreducible variety of dimension
\begin{equation*}
d_{g,\boldsymbol{\mu}}:= r^2(2g-2+n) - \sum_{i,j}(\mu^i_j)^2+2-2g,
\end{equation*}
and has a holomorphic symplectic structure,  if nonempty. (See \cite{HLR},\cite{IIS}).
In particular, for $g=0,\boldsymbol{\mu}=((1,\ldots,1),\ldots,(1,\ldots,1))$, the dimension of  $\mathcal{R}^r_{n,\boldsymbol{a}}$ is
\begin{equation*}
d_{0,((1,1),\ldots,(1,1))}= 2 n-6.
\end{equation*}

\section{Invariant ring }

Now, we recall the explicit description of the invariant ring $(R^r_{(g,n-1)})^{Ad(SL_r(\mathbb{C}))}$ for the two cases $g=0,r=2,n=4$ and $g=0,r=3,n=3$.
The following proposition follows from the fundamental theorem for matrix invariants. (See \cite{For} or \cite{Pro}).
\begin{prop}
\textit{
\begin{equation*}
(R^r_{(0,n-1)})^{Ad(SL_r(\mathbb{C}))}=\mathbb{C}[\mathrm{Tr}(M_{i_1}M_{i_2}\cdots M_{i_{k}})\mid 1 \le i_1,\ldots,i_k \le n-1].
\end{equation*}
In particular, for $r=2$, the elements $\mathrm{Tr}(M_{i_1}M_{i_2}\cdots M_{i_{k}})$ of degree $k\le 3$ generate the invariant ring, that is,
\begin{equation*}
(R^2_{(0,n-1)})^{Ad(SL_2(\mathbb{C}))}=\mathbb{C}[\mathrm{Tr}(M_i),\mathrm{Tr}(M_iM_j),\mathrm{Tr}(M_iM_jM_k) \mid 1 \le i,j,k \le n-1].
\end{equation*}
}
\end{prop}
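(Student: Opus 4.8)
The plan is to treat the two assertions separately: the first is a direct consequence of the fundamental theorem for matrix invariants, and the second follows from a length-reduction argument driven by the Cayley--Hamilton identity in $M_2(\mathbb{C})$.

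For the first assertion I would start from the classical fundamental theorem (Procesi, Razmyslov): on $M_r(\mathbb{C})^{n-1}$ with $GL_r(\mathbb{C})$ acting by simultaneous conjugation, the invariant ring is generated by the traces $\mathrm{Tr}(X_{i_1}\cdots X_{i_k})$ of words in the generic matrices. Two reductions then yield the stated form. First, $SL_r(\mathbb{C})$ and $GL_r(\mathbb{C})$ have the same conjugation orbits: scalar matrices act trivially, and every $g\in GL_r(\mathbb{C})$ factors as $g=\lambda h$ with $\lambda I$ scalar and $h\in SL_r(\mathbb{C})$, after extracting an $r$-th root of $\det g$ in $\mathbb{C}$; hence the two invariant rings coincide. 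Second, $SL_r(\mathbb{C})^{n-1}\subset M_r(\mathbb{C})^{n-1}$ is a closed $GL_r(\mathbb{C})$-stable subvariety, so restriction gives a surjection of coordinate rings; since $GL_r(\mathbb{C})$ is linearly reductive in characteristic $0$, taking invariants is exact and the induced map on invariant rings remains surjective. Therefore the invariant ring of $SL_r(\mathbb{C})^{n-1}$ is generated by the restrictions of the word-traces, which is the first displayed formula.

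For the second assertion I would use the polarized Cayley--Hamilton identity, valid for all $X,Y\in M_2(\mathbb{C})$,
\begin{equation*}
XY+YX=\mathrm{Tr}(X)\,Y+\mathrm{Tr}(Y)\,X+\bigl(\mathrm{Tr}(XY)-\mathrm{Tr}(X)\mathrm{Tr}(Y)\bigr)I,\tag{$\star$}
\end{equation*}
obtained by polarizing $Z^2-\mathrm{Tr}(Z)Z+\det(Z)I=0$. Let $S$ be the subring generated by the traces of words of length $\le 3$. By the first assertion it suffices to show every $\mathrm{Tr}(M_{i_1}\cdots M_{i_L})$ with $L\ge 4$ lies in $S$, which I would prove by induction on $L$. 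The heart of the matter is $L=4$: multiplying $(\star)$ by a third matrix and taking traces, each adjacent transposition of the four letters changes $\mathrm{Tr}(ABCD)$ by a $\mathbb{Q}$-polynomial in traces of length $\le 3$; combining the relations coming from the swaps $ABCD\leftrightarrow ABDC$, $ABCD\leftrightarrow ADBC$ and $ABDC\leftrightarrow ADBC$ yields $2\,\mathrm{Tr}(ABCD)\in S$, hence $\mathrm{Tr}(ABCD)\in S$. For general $L\ge 4$ I would group the word into four blocks $P=M_{i_1}$, $Q=M_{i_2}$, $R=M_{i_3}$, $T=M_{i_4}\cdots M_{i_L}$ and apply the $L=4$ identity to the $2\times 2$ matrices $P,Q,R,T$; this writes $\mathrm{Tr}(M_{i_1}\cdots M_{i_L})$ in terms of traces of products of at most three of these blocks, each of which is a word of length strictly less than $L$, so the induction closes.

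The main obstacle is the $L=4$ reduction. A single application of $(\star)$ controls only the \emph{symmetric} combination $\mathrm{Tr}(ABCD)+\mathrm{Tr}(ABDC)$ under an adjacent swap, and iterating the substitution naively is circular. One must therefore verify that the web of swap-relations among the six cyclic orderings of $\{A,B,C,D\}$ is rich enough to isolate $\mathrm{Tr}(ABCD)$ itself rather than merely relate the orderings to one another. The parity computation producing $2\,\mathrm{Tr}(ABCD)\in S$ is exactly what breaks the circularity, and keeping track of the cyclic adjacencies and the resulting sign pattern is the delicate point to get right.
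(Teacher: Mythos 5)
Your proposal is correct, and it follows the same route the paper takes: the paper offers no written proof beyond the citation ``follows from the fundamental theorem for matrix invariants (see [For] or [Pro])'', and your first paragraph is exactly that invocation, with the standard (and correctly executed) reductions from $GL_r$ to $SL_r$ and from $M_r(\mathbb{C})^{n-1}$ to the closed reductive-group-stable subvariety $SL_r(\mathbb{C})^{n-1}$ spelled out. Your second part supplies the classical Cayley--Hamilton length-reduction that the paper implicitly also outsources to Procesi (the bound $2^r-1$, which is $3$ for $r=2$); the identity $(\star)$ and the parity computation $2\,\mathrm{Tr}(ABCD)\in S$ from the three adjacent-swap relations both check out, so your write-up is a valid self-contained expansion of what the paper merely cites.
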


First, we consider the case where $g=0,r=2,n=4$.
Let $(i,j,k)$ be a cyclic permutation of $(1,2,3)$. 
Then, the invariant ring $(R^2_{(0,3)})^{Ad(SL_2(\mathrm{C}))}$ is generated by
\begin{equation}
x_i:=\mathrm{Tr}(M_k M_j)\  (i=1,2,3), a_i:=\mathrm{Tr}(M_i)\ (i=1,2,3), a_4:=\mathrm{Tr}(M_3 M_2 M_1).
\end{equation}
The following proposition is due to Frike-Klein, Jimbo, and Iwasaki, (\cite{FK}, \cite{Jim}, \cite{Iwa}).

\begin{prop}
\textit{
The invariant ring $(R^2_{(0,3)})^{Ad(SL_2(\mathrm{C})))}$ is generated by seven elements $x_1,x_2,x_3,a_1,a_2,a_3,a_4$ and there exists a relation
\begin{equation*}
f_{\boldsymbol{a}}(x):= x_1 x_2 x_3+x_1^2+x_2^2+x_3^2-\theta_1(\boldsymbol{a})x_1-\theta_2(\boldsymbol{a})x_2-\theta_3(\boldsymbol{a})x_3+\theta_4(\boldsymbol{a})=0
\end{equation*}
where
\begin{align*}
&\theta_i (\boldsymbol{a})=a_i a_4+a_j a_k\quad  (i,j,k),\\
&\theta_4 (\boldsymbol{a})=a_1 a_2 a_3 a_4 +a_1^2+a_2^2+a_3^2+a_4^2-4.
\end{align*}
Therefore, we have an isomorphism
\begin{equation*}
(R^2_{(0,3)})^{Ad(SL_2(\mathbb{C})))} \cong \mathbb{C}[x_1,x_2,x_3,a_1,a_2,a_3,a_4]/(f_{\boldsymbol{a}}(x)).
\end{equation*}
}
\end{prop}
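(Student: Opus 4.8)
The plan is to combine the fundamental theorem for matrix invariants (the preceding proposition) with the classical Fricke trace identities for $SL_2(\mathbb{C})$, and then to match Krull dimensions so that no relation beyond $f_{\boldsymbol{a}}$ can survive. Throughout I write $M_1,M_2,M_3$ for the three free generators (so that $M_4=(M_1M_2M_3)^{-1}$). By the preceding proposition the invariant ring $(R^2_{(0,3)})^{Ad(SL_2(\mathbb{C}))}$ is generated by the traces $\mathrm{Tr}(M_i)$, $\mathrm{Tr}(M_iM_j)$ and $\mathrm{Tr}(M_iM_jM_k)$ with $1\le i,j,k\le 3$, so it suffices to reduce every such trace to a polynomial in $x_1,x_2,x_3,a_1,a_2,a_3,a_4$.

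First I would record the identities that drive the reduction: Cayley--Hamilton in the form $M+M^{-1}=\mathrm{Tr}(M)\,I$, giving $\mathrm{Tr}(M^{-1})=\mathrm{Tr}(M)$ and $\mathrm{Tr}(M^2)=\mathrm{Tr}(M)^2-2$; the product rule $\mathrm{Tr}(AB)+\mathrm{Tr}(AB^{-1})=\mathrm{Tr}(A)\mathrm{Tr}(B)$; the commutator formula $\mathrm{Tr}(ABA^{-1}B^{-1})=\mathrm{Tr}(A)^2+\mathrm{Tr}(B)^2+\mathrm{Tr}(AB)^2-\mathrm{Tr}(A)\mathrm{Tr}(B)\mathrm{Tr}(AB)-2$; and the triple-sum identity
\[
\mathrm{Tr}(ABC)+\mathrm{Tr}(ACB)=\mathrm{Tr}(A)\mathrm{Tr}(BC)+\mathrm{Tr}(B)\mathrm{Tr}(CA)+\mathrm{Tr}(C)\mathrm{Tr}(AB)-\mathrm{Tr}(A)\mathrm{Tr}(B)\mathrm{Tr}(C).
\]
Using these, the single traces are the $a_i$; a repeated double trace gives $\mathrm{Tr}(M_i^2)=a_i^2-2$, while the distinct double traces are exactly $x_1,x_2,x_3$; any triple trace with a repeated index collapses via $\mathrm{Tr}(M_i^2M_j)=a_i\,\mathrm{Tr}(M_iM_j)-\mathrm{Tr}(M_j)$ to shorter traces; and a triple trace in the three distinct letters falls, up to the cyclic invariance of $\mathrm{Tr}$, into one of the two classes represented by $p:=\mathrm{Tr}(M_1M_2M_3)$ and $q:=\mathrm{Tr}(M_1M_3M_2)=\mathrm{Tr}(M_3M_2M_1)=a_4$. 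This establishes generation by the seven listed elements.

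To produce the relation I would view $p$ and $q$ as the two roots of a quadratic whose coefficients lie in $\mathbb{C}[a_1,a_2,a_3,x_1,x_2,x_3]$. The triple-sum identity gives $p+q=a_1x_1+a_2x_2+a_3x_3-a_1a_2a_3$. For the product, apply the product rule to $X=M_1M_2M_3$ and $Y=M_1M_3M_2$:
\[
pq=\mathrm{Tr}(XY)+\mathrm{Tr}(XY^{-1}),
\]
where the second term simplifies by conjugation- and cyclic-invariance to the commutator trace $\mathrm{Tr}(M_2M_3M_2^{-1}M_3^{-1})$, and the first, a length-six word, reduces by repeated use of the product rule to a polynomial in $a_1,a_2,a_3,x_1,x_2,x_3$. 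The outcome is that $pq$ is free of $a_4$, so substituting the root $q=a_4$ into $t^2-(p+q)t+pq=0$ produces a genuine relation among the seven generators; collecting terms reproduces $a_4^2-(a_1x_1+a_2x_2+a_3x_3-a_1a_2a_3)a_4+pq=0$, which is exactly $f_{\boldsymbol{a}}(x)=0$ with the stated $\theta_i$ and $\theta_4$. This explicit reduction of $pq$ is the computational heart of the argument and the step I expect to be the main obstacle.

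Finally I would show that $f_{\boldsymbol{a}}$ generates the whole ideal of relations. The invariant ring $A:=(R^2_{(0,3)})^{Ad(SL_2(\mathbb{C}))}$ is the coordinate ring of the irreducible affine variety $SL_2(\mathbb{C})^3/\!/SL_2(\mathbb{C})$, hence a domain of Krull dimension $\dim SL_2(\mathbb{C})^3-\dim PGL_2(\mathbb{C})=9-3=6$. Generation gives a surjection $\pi:\mathbb{C}[x_1,x_2,x_3,a_1,a_2,a_3,a_4]\twoheadrightarrow A$ whose kernel is a prime ideal of height $7-6=1$ containing $f_{\boldsymbol{a}}$; since the source is a UFD this kernel is principal, say $\ker\pi=(h)$ with $h\mid f_{\boldsymbol{a}}$. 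It then remains to check that $f_{\boldsymbol{a}}$ is irreducible, which I would do by regarding it as a monic quadratic in $x_1$ and verifying that its discriminant $(x_2x_3-\theta_1)^2-4(x_2^2+x_3^2-\theta_2x_2-\theta_3x_3+\theta_4)$ is not a square in $\mathbb{C}[x_2,x_3,a_1,\ldots,a_4]$ (equivalently, invoking irreducibility of the character variety). Then $f_{\boldsymbol{a}}$ and $h$ agree up to a unit, so $\ker\pi=(f_{\boldsymbol{a}})$ and $A\cong\mathbb{C}[x_1,x_2,x_3,a_1,a_2,a_3,a_4]/(f_{\boldsymbol{a}})$.
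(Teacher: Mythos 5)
Your argument is correct, but there is nothing in the paper to compare it against: the paper states this proposition as a known result of Fricke--Klein, Jimbo and Iwasaki and gives no proof, only the citations \cite{FK}, \cite{Jim}, \cite{Iwa}. What you have written is a sound reconstruction of the classical argument. The generation step (reducing all traces of words of length $\le 3$ to the seven listed elements via Cayley--Hamilton and the product rule, and recovering $p=\mathrm{Tr}(M_1M_2M_3)$ from $p+q$ and $q=a_4$) is complete; the derivation of $f_{\boldsymbol{a}}$ as $t^2-(p+q)t+pq$ evaluated at $t=a_4$ is the standard route and gives exactly the stated $\theta_i$, $\theta_4$ once one checks the indexing ($x_1=\mathrm{Tr}(M_3M_2)$ pairs with $a_2a_3$ from the $pq$-term and with $a_1a_4$ from the $(p+q)$-term, as required); and the concluding step --- $\ker\pi$ is a height-one prime in a UFD, hence principal, hence generated by $f_{\boldsymbol{a}}$ once $f_{\boldsymbol{a}}$ is shown irreducible --- is exactly the right way to rule out further relations, with your discriminant test valid because $f_{\boldsymbol{a}}$ is monic of degree $2$ in $x_1$ and the leading coefficient $x_3^2-4$ of the discriminant in $x_2$ is not a square. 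The one genuinely unfinished item is the one you flag yourself: the explicit reduction of $\mathrm{Tr}(M_1M_2M_3M_1M_3M_2)$ showing that $pq$ lies in $\mathbb{C}[a_1,a_2,a_3,x_1,x_2,x_3]$ with the precise coefficients needed for $\theta_4=a_1a_2a_3a_4+a_1^2+a_2^2+a_3^2+a_4^2-4$. This is a finite, classical computation (it is Fricke's product identity), so it is a verification to be carried out rather than a gap in the strategy, but as written the proposal asserts rather than establishes it.
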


We have the surjective morphism
\begin{equation*}
\phi^2_{(\mathbb{P}^1,0,1,t,\infty)} :  \mathcal{R}^2_{(\mathbb{P}^1,0,1,t,\infty)}=\mathrm{Spec}[(R^2_{(0,3)})^{Ad(SL_2(\mathbb{C})))}] 
\rightarrow \mathcal{A}^{(4)}_2=\mathrm{Spec}[\mathbb{C}[a_1,a_2,a_3,a_4]]
\end{equation*}
where $t$ is a point of $\mathbb{P}^1$ such that $t\neq 0,1,\infty$.
The fiber at $\boldsymbol{a}\in \mathcal{A}^{(4)}_2$, such that the type of the multiplicities of eigenvalues is $((1,1),(1,1),(1,1),(1,1))$, is an affine cubic hypersurface in $\mathbb{C}^3$.
Hence, the $SL_2(\mathbb{C})$-character variety of the $4$-punctured projective line is an affine cubic hypersurface
\begin{equation*}
\mathcal{R}_{4,\boldsymbol{a}} \cong \{ (x_1,x_2,x_3) \in \mathbb{C}^3 \mid f_{\boldsymbol{a}}(x)=0 \}.
\end{equation*}
The affine cubic hypersurface is called a \textit{Fricke-Klein cubic surface}.

We consider the natural compactification $\mathbb{C}^3\hookrightarrow \mathbb{P}^3$ as follows.
Set $x_1=X/W,x_2=Y/W,x_3=Z/W$.
Then, we obtain the following homogeneous polynomial
\begin{equation*}
X Y Z+X^2 W+Y^2 W+Z^2 W-\theta_1(a) X W^2-\theta_2(a) Y W^2-\theta_3(a) Z W^2+\theta_4(a) W^3=0.
\end{equation*}
Substitute $W=0$ to this equation.
Then, we obtain the equation $XYZ=0$.
Hence, the boundary divisor of the natural compactification of $\mathcal{R}_{4,\boldsymbol{a}}$ consists of three lines.
The boundary complex is shown in Figure 1.
\begin{figure}
\unitlength 0.1in
\begin{picture}( 26.3000,  9.2000)(  9.5000,-16.9000)
%
{\color[named]{Black}{%
\special{pn 8}%
\special{pa 1624 770}%
\special{pa 1094 1650}%
\special{fp}%
\special{pa 1070 1438}%
\special{pa 2062 1438}%
\special{fp}%
\special{pa 1312 778}%
\special{pa 1960 1650}%
\special{fp}%
}}%
\put(9.5000,-12.2000){\makebox(0,0)[lb]{$X=0$}}%
\put(13.5000,-15.6000){\makebox(0,0)[lb]{$Y=0$}}%
\put(16.8900,-12.2700){\makebox(0,0)[lb]{$Z=0$}}%
%
{\color[named]{Black}{%
\special{pn 8}%
\special{pa 2176 1186}%
\special{pa 2640 1186}%
\special{fp}%
\special{sh 1}%
\special{pa 2640 1186}%
\special{pa 2574 1166}%
\special{pa 2588 1186}%
\special{pa 2574 1206}%
\special{pa 2640 1186}%
\special{fp}%
}}%
%
{\color[named]{Black}{%
\special{pn 8}%
\special{pa 2940 1132}%
\special{pa 2940 1132}%
\special{fp}%
}}%
%
{\color[named]{Black}{%
\special{pn 4}%
\special{sh 1}%
\special{ar 2952 1064 16 16 0  6.28318530717959E+0000}%
\special{sh 1}%
\special{ar 3548 1064 16 16 0  6.28318530717959E+0000}%
\special{sh 1}%
\special{ar 3266 1540 16 16 0  6.28318530717959E+0000}%
\special{sh 1}%
\special{ar 3266 1540 16 16 0  6.28318530717959E+0000}%
}}%
%
{\color[named]{Black}{%
\special{pn 8}%
\special{pa 2958 1064}%
\special{pa 3548 1064}%
\special{fp}%
\special{pa 3548 1064}%
\special{pa 3266 1534}%
\special{fp}%
\special{pa 3266 1534}%
\special{pa 2958 1078}%
\special{fp}%
}}%
%
{\color[named]{Black}{%
\special{pn 0}%
\special{sh 1.000}%
\special{ia 2952 1064 34 38  0.0000000  6.2831853}%
}}%
{\color[named]{Black}{%
\special{pn 4}%
\special{pn 4}%
\special{ar 2952 1064 34 38  0.0000000  6.2831853}%
}}%
%
{\color[named]{Black}{%
\special{pn 0}%
\special{sh 1.000}%
\special{ia 3266 1540 32 38  0.0000000  6.2831853}%
}}%
{\color[named]{Black}{%
\special{pn 4}%
\special{pn 4}%
\special{ar 3266 1540 32 38  0.0000000  6.2831853}%
}}%
%
{\color[named]{Black}{%
\special{pn 0}%
\special{sh 1.000}%
\special{ia 3548 1070 32 38  0.0000000  6.2831853}%
}}%
{\color[named]{Black}{%
\special{pn 4}%
\special{pn 4}%
\special{ar 3548 1070 32 38  0.0000000  6.2831853}%
}}%
\put(11.6600,-18.2000){\makebox(0,0)[lb]{$XYZ=0$}}%
\put(21.7000,-10.0000){\makebox(0,0)[lb]{boundary}}%
\put(21.9000,-11.4000){\makebox(0,0)[lb]{complex}}%
\end{picture}%
\caption{} 
\end{figure}
The boundary complex is a simplicial decomposition of $S^1$.

Next, we consider the case where $g=0,r=3,n=3$. 
We describe generators and defining relations for the invariant ring $(R^{3}_{(0,2)})^{Ad(SL_3(\mathbb{C}))}$.
The following proposition is due to Lawton \cite{Lawton}. 
\begin{prop}
\textit{
The invariant ring $(R^{3}_{(0,2)})^{Ad(SL_3(\mathbb{C}))}$ is generated by
\begin{align*}
a_1 &:= \mathrm{Tr}(M_1) &   &a_2 := \mathrm{Tr}(M_1^{-1})\\
b_1 &:= \mathrm{Tr}(M_2)  & &b_2 := \mathrm{Tr}(M_2^{-1}) \\
c_1 &:= \mathrm{Tr}(M_1^{-1}M_2^{-1})=\mathrm{Tr}(M_3) & &c_2 := \mathrm{Tr}(M_1 M_2)=\mathrm{Tr}(M_3^{-1}) \\
x_1 &:= \mathrm{Tr}(M_1 M_2^{-1}) & &x_2 := \mathrm{Tr}(M_1^{-1}M_2) \\
x_3 &:= \mathrm{Tr}(M_1 M_2 M_1^{-1}M_2^{-1}),
\end{align*}
and there exists a relation
\begin{equation*}
x_3^2-f x_3+g=0
\end{equation*}
where $f,g$ are polynomials of  $x_1,x_2$ over $\mathbb{C}[a_1,a_2,b_1,b_2,c_1,c_2]$, more precisely,
\begin{align*}
f &= x_1x_2-a_2b_1x_1-a_1b_2x_2+(\mbox{constant terms in $x_1,x_2$}) \\
g &= x_1^3+x_2^3 +(\mbox{terms that order is at most $2$ in $x_1,x_2$} ).
\end{align*}
}
\end{prop}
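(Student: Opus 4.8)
The plan is to start from the fundamental theorem for matrix invariants recalled above, which tells us that $(R^{3}_{(0,2)})^{Ad(SL_3(\mathbb{C}))}$ is spanned by traces $\mathrm{Tr}(W)$ of words $W$ in $M_1,M_2$ and their inverses, and then to use the Cayley--Hamilton theorem to cut this down to the nine listed generators. For $M\in SL_3(\mathbb{C})$ the characteristic polynomial gives
\begin{equation*}
M^3-\mathrm{Tr}(M)\,M^2+\mathrm{Tr}(M^{-1})\,M-I_3=0,\qquad \mathrm{Tr}(M^{-1})=\tfrac12\bigl(\mathrm{Tr}(M)^2-\mathrm{Tr}(M^2)\bigr),
\end{equation*}
so that $M^{-1}$ and every power $M^k$ are $\mathbb{C}[\mathrm{Tr}(M),\mathrm{Tr}(M^{-1})]$-linear combinations of $I_3,M,M^2$. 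Polarizing this identity in three matrices $A,B,C$ yields the fundamental trace identity for $3\times 3$ matrices, which expresses the symmetric combination $\mathrm{Tr}(ABC)+\mathrm{Tr}(ACB)$ through traces of words of length $\le 2$. Applying these two facts repeatedly, I would eliminate all inverses and all exponents $\ge 2$ and reduce every word to a polynomial in the candidate invariants; the point of the reduction is that, after using the $SL_3$ relations, the only independent invariant not already captured by the traces of $M_1^{\pm1},M_2^{\pm1},M_1M_2,M_1^{-1}M_2^{-1},M_1M_2^{-1},M_1^{-1}M_2$ is the commutator trace $x_3=\mathrm{Tr}(M_1M_2M_1^{-1}M_2^{-1})$. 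This establishes that $a_1,a_2,b_1,b_2,c_1,c_2,x_1,x_2,x_3$ generate.

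For the relation, the key point is that $x_3=\mathrm{Tr}([M_1,M_2])$ is paired with $x_3'=\mathrm{Tr}([M_1,M_2]^{-1})=\mathrm{Tr}([M_2,M_1])$. The matrix-transpose involution $\theta\colon (M_1,M_2)\mapsto(M_1^{T},M_2^{T})$ descends to the quotient (transposition turns conjugation by $P$ into conjugation by $(P^{-1})^{T}$), and a direct check shows that $\theta$ fixes each of $a_1,a_2,b_1,b_2,c_1,c_2,x_1,x_2$ while interchanging $x_3\leftrightarrow x_3'$. Consequently the symmetric functions $f:=x_3+x_3'$ and $g:=x_3x_3'$ are $\theta$-invariant; applying the polarized Cayley--Hamilton identity to the commutator word exhibits them as polynomials in $x_1,x_2$ over $\mathbb{C}[a_1,a_2,b_1,b_2,c_1,c_2]$, and then $x_3$ is a root of the monic quadratic $x_3^2-fx_3+g=0$. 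That this is the full defining relation follows from a dimension count: $SL_3(\mathbb{C})^2/\!/SL_3(\mathbb{C})$ has dimension $2\cdot 8-8=8$, which matches a hypersurface in the affine nine-space with coordinates the nine generators.

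It remains to pin down the leading forms $f=x_1x_2-a_2b_1x_1-a_1b_2x_2+(\text{lower order in }x_1,x_2)$ and $g=x_1^3+x_2^3+(\text{order}\le 2\text{ in }x_1,x_2)$. These come out by tracking only the top-degree-in-$(x_1,x_2)$ contributions while reducing $\mathrm{Tr}([M_1,M_2])+\mathrm{Tr}([M_2,M_1])$ and $\mathrm{Tr}([M_1,M_2])\,\mathrm{Tr}([M_2,M_1])$; here the symmetry of the answer under the swap $M_1\leftrightarrow M_2$ (which interchanges $x_1\leftrightarrow x_2$ and $a_i\leftrightarrow b_i$ and fixes $c_1,c_2$) is a useful consistency check on the stated shapes. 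The main obstacle is exactly this last bookkeeping: the existence of the quadratic follows cleanly from the transpose symmetry together with the generation statement, but isolating the precise top terms of $f$ and $g$ requires the lengthy explicit expansion via the polarized Cayley--Hamilton identities, and this is the step where I would reproduce or cite Lawton's computation.
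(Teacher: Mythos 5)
The paper does not prove this proposition at all: it is quoted verbatim from Lawton's paper \cite{Lawton}, so there is no in-text argument to compare yours against. Your outline does track Lawton's actual strategy --- generation via the fundamental theorem of matrix invariants plus Cayley--Hamilton reductions, and the quadratic relation via the transpose involution $(M_1,M_2)\mapsto(M_1^{T},M_2^{T})$, which (because word reversal fixes the traces of the first eight words up to cyclic permutation and sends $\mathrm{Tr}([M_1,M_2])$ to $\mathrm{Tr}([M_2,M_1])=\mathrm{Tr}([M_1,M_2]^{-1})$) forces $x_3$ to satisfy a monic quadratic over the subring generated by the other eight elements. That part of your argument is correct and is exactly the right device.

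There is, however, a concrete false step in the generation half. For $3\times 3$ matrices the symmetrized triple trace $\mathrm{Tr}(ABC)+\mathrm{Tr}(ACB)$ is \emph{not} expressible through traces of words of length $\le 2$: if you take the trace of the fully polarized Cayley--Hamilton identity, the coefficient of $\mathrm{Tr}(ABC)+\mathrm{Tr}(ACB)$ coming from the $M^3$ term is exactly cancelled by the contribution of the polarized determinant (since $6\det M=\mathrm{Tr}(M)^3-3\,\mathrm{Tr}(M)\mathrm{Tr}(M^2)+2\,\mathrm{Tr}(M^3)$ and $\mathrm{Tr}(I_3)=3$), so that identity is vacuous in degree $3$; and restricting to diagonal matrices shows $\sum_i a_ib_ic_i$ is an indecomposable multisymmetric function, hence a genuine new degree-$3$ generator. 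The identity that actually drives the reduction is the degree-$4$ fundamental trace identity obtained by multiplying the polarized Cayley--Hamilton identity by a fourth matrix and then tracing, combined with the known degree bound ($\le 6$, after Teranishi and Abeasis--Pittaluga) for generators of the trace algebra of two generic $3\times 3$ matrices; this is precisely the intricate bookkeeping that Lawton carries out and that your proposal defers to him, both for the list of nine generators and for the explicit leading forms of $f$ and $g$. Finally, note that your dimension count only shows the quadratic is \emph{consistent} with the variety being a hypersurface; it does not by itself show the quadratic generates the full relation ideal (one also needs algebraic independence of the other eight generators), though the proposition as stated only asserts the existence of the relation, so this last point is harmless.
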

We consider the subring $A^{(3)}_3=\mathbb{C}[a_1,a_2,b_1,b_2,c_1,c_2]$ of $(R^{3}_{(0,2)})^{Ad(SL_3(\mathbb{C}))}$.
We have a natural morphism
\begin{equation*}
\phi^{3}_{(\mathbb{P}^1,0,1,\infty)} :  \mathcal{R}^{3}_{(\mathbb{P}^1,0,1,\infty)}=\mathrm{Spec}[(R^{3}_{(0,2)})^{Ad(SL_3(\mathbb{C}))}]  \rightarrow \mathcal{A}^{(3)}_3=\mathrm{Spec}[A^{(3)}_3].
\end{equation*}
The fiber at $\boldsymbol{a}\in \mathcal{A}^{(3)}_3$, such that the type of the multiplicities of eigenvalues is $((1,1,1),(1,1,1),(1,1,1))$, is an affine cubic hypersurface in $\mathbb{C}^3$.
Hence, the $SL_3(\mathbb{C})$-character variety of the $3$-punctured projective line is an affine cubic hypersurface
\begin{equation*}
\mathcal{R}^3_{3,\boldsymbol{a}} \cong \{ (x_1,x_2,x_3) \in \mathbb{C}^3 \mid x_3^2-f x_3+g=0 \}.
\end{equation*}

We consider the compactification $\mathbb{C}^3\hookrightarrow \mathbb{P}^3$ as follows.
Set $x_1=X/W,x_2=Y/W,x_3=Z/W$.
Then, we obtain the following homogeneous polynomial
\begin{equation*}
X^3+Y^3-X Y Z +(\mbox{term containing }W )=0.
\end{equation*}
We substitute $W=0$ to this equation. 
Then, we obtain the equation $X^3+Y^3-X Y Z=0$.
This equation defines a plane cubic curve having a node.
The boundary complex is shown in Figure 2.
\begin{figure}
\input{picture2.tex}
\caption{} 
\end{figure}
The boundary complex is a simplicial decomposition of $S^1$.

\section{ A compactification of the character variety}

We construct a compactification of the $SL_2(\mathbb{C})$-character variety $\mathcal{R}_{n,\boldsymbol{k}}$ 
($\boldsymbol{k}$ of the $n$-punctured projective line is date of coefficient of characteristic polynomials)
by means of the geometric invariant theory for a compactification of the following variety
\begin{dfn}
We put
\begin{equation}
\begin{aligned}
\mathrm{Rep}_{n,\boldsymbol{k}}:=&\ \{ (M_1,\ldots,M_{n-1} ) \in \mathcal{C}_1 \times \cdots \times \mathcal{C}_{n-1}|M_{n-1}^{-1}\cdots M_1^{-1} \in \mathcal{C}_n \} \\
=&\ \{ (M_1,\ldots,M_{n-1} ) \in \mathcal{C}_1 \times \cdots \times \mathcal{C}_{n-1}|\mathrm{Tr}(M_{n-1}^{-1}\cdots M_1^{-1}) =k_n \}
\end{aligned}
\end{equation}
where $\mathcal{C}_i=\{  M \in SL_2(\mathbb{C}) \mid \mathrm{Tr}(M)=k_i  \}$ and $\boldsymbol{k}=(k_1,\ldots,k_n) \in \mathbb{C}^n$.
The affine variety $\mathrm{Rep}_{n,\boldsymbol{k}}$ is said to the $SL_2(\mathbb{C})$-\textit{representation variety} of the $n$-punctured line.
\end{dfn}
We will introduce a compactification of the representation variety due to Benjamin \cite{abc}. 
First, we consider a construction of a compactification of the algebraic group $SL_2(\mathbb{C})$.
We pick an embedding $\alpha\colon SL_2(\mathbb{C}) \hookrightarrow PGL_3(\mathbb{C})$.
Such an embedding always exists: we consider the natural embedding $SL_2(\mathbb{C}) \rightarrow GL_2(\mathbb{C})$ and
we take the composition of the embedding and the map
$GL_2(\mathbb{C}) \xrightarrow[]{\xi} GL_3(\mathbb{C}) \rightarrow PGL_3(\mathbb{C})$ where
\begin{equation*}
\xi (A)=
\arraycolsep5pt
\left(
\begin{array}{@{\,}c|c@{\,}}
A&\\
\hline 
& 1 
\end{array}
\right)
\end{equation*}
and the second arrow is the canonical projection.
We regard $PGL_3(\mathbb{C})$ as an open subvariety of $\mathbb{P}(M_3(\mathbb{C}))$, 
and define the compactification $\overline{SL_2(\mathbb{C})}$ of $SL_2(\mathbb{C})$ as the closure of $\alpha(SL_2(\mathbb{C}))$ in $\mathbb{P}(M_3(\mathbb{C}))$, that is,
\begin{equation*}
\overline{SL_2(\mathbb{C})}=\left\{ \arraycolsep5pt\left(
\begin{array}{@{\,}cc|c@{\,}}
a&b&\\
c&d& \\
\hline 
&& e 
\end{array}
\right) \in \mathbb{P}(M_3(\mathbb{C}))\ \middle| \  ad-bc=e^2  \right\}.
\end{equation*}
Then, we obtain a compactification of the semisimple conjugacy class $\mathcal{C}_i$, denoted by $\overline{\mathcal{C}_i}$, that is,
\begin{equation*}
\overline{\mathcal{C}_i}=\left\{ \arraycolsep5pt\left(
\begin{array}{@{\,}cc|c@{\,}}
a&b&\\
c&d& \\
\hline 
&& e 
\end{array}
\right) \in \mathbb{P}(M_3(\mathbb{C}))\  \middle| \  ad-bc=e^2, \ a+d=k_i e  \right\}.
\end{equation*}
We can define a compactification of the representation variety.
\begin{dfn}
We put
\begin{equation}
\overline{\mathrm{Rep}_{n,\boldsymbol{k}}}:=
\{ (M_1,\ldots,M_{n-1} ) \in \overline{\mathcal{C}}_1 \times \cdots \times \overline{\mathcal{C}}_{n-1} \mid \mathrm{Tr} (A_{1}\cdots A_{n-1})= k_ne_1\cdots e_{n-1} \}
\end{equation}
where 
\begin{equation*}
M_1=
\arraycolsep5pt
\left(
\begin{array}{@{\,}c|c@{\,}}
A_1&\\
\hline 
& e_1 
\end{array}
\right),
\ldots,
M_{n-1}=
\arraycolsep5pt
\left(
\begin{array}{@{\,}c|c@{\,}}
A_{n-1}&\\
\hline 
& e_{n-1} 
\end{array}
\right) .
\end{equation*}
\end{dfn}
\begin{atte}
In general, for $X\in \overline{SL_2(\mathbb{C})}$, there is no inverse.
Since 
\begin{equation*}
\mathrm{Tr}(A_{n-1}^{-1}\cdots A_1^{-1})=\mathrm{Tr}(A_{1}\cdots A_{n-1})
\end{equation*}
 for $\forall A_i\in SL_2(\mathbb{C})$, 
we use the condition $ \mathrm{Tr}(A_{1}\cdots A_{n-1})=k_n $, instead of $\mathrm{Tr}(A_{n-1}^{-1}\cdots A_1^{-1})=k_n$.
\end{atte}

We have the following action of $SL_2(\mathbb{C})$ on $\overline{\mathrm{Rep}_{n,\boldsymbol{k}}}$,
which is compatible with the simultaneous action of $SL_2(\mathbb{C})$ on $\mathrm{Rep}_{n,\boldsymbol{k}}$
\begin{equation}\label{action for proj}
\begin{aligned}
P \curvearrowright &\left(
\arraycolsep5pt
\left(
\begin{array}{@{\,}c|c@{\,}}
A_1&\\
\hline 
& e_1 
\end{array}
\right),
\ldots,
\arraycolsep5pt
\left(
\begin{array}{@{\,}c|c@{\,}}
A_{n-1}&\\
\hline 
& e_{n-1}
\end{array}
\right) \right) \\
&\longmapsto
\left(
\arraycolsep5pt
\left(
\begin{array}{@{\,}c|c@{\,}}
PA_1P^{-1}&\\
\hline 
& e_1 
\end{array}
\right), 
\ldots,
\arraycolsep5pt
\left(
\begin{array}{@{\,}c|c@{\,}}
PA_{n-1}P^{-1}&\\
\hline 
& e_{n-1}
\end{array}
\right)
\right).
\end{aligned}
\end{equation}

We regard $\overline{\mathrm{Rep}_{n,\boldsymbol{k}}} \subset \overline{\mathcal{C}_1} \times \cdots \times \overline{\mathcal{C}_{n-1}}$ 
as the closed subset in $\mathbb{P}^4 \times \cdots  \times \mathbb{P}^4$. 
Then, we obtain an embedding in the projective space by the Segre embedding.
Let $L$ be an ample line bundle associated with this embedding,
that is, 
\begin{equation*}
L= \bigotimes_{i=1}^{n-1} p_i^* (\mathcal{O}_{\mathbb{P}^4}(1))
\end{equation*}
where $p_i \colon \overline{\mathrm{Rep}_{n,\boldsymbol{k}}} \rightarrow \mathbb{P}^4$ is the $i$-th projection.
Then, $L$ admits the $SL_2(\mathbb{C})$-linearization with respect to the action.


For $x=(M_1,\ldots,M_{n-1}) \in \overline{\mathrm{Rep}_{n,\boldsymbol{k}}}$,
we put 
\begin{equation*}
I^{\textit{nil}} :=\{ i \in \{1,\ldots,n-1\} \mid M_i \mbox{ is nilpotent i.e. $e_i=0$ } \}.
\end{equation*}
If $I^{\textit{nil}}$ is not empty, we decompose 
\begin{equation}\label{decomp index set}
I^{\textit{nil}} = I_1^{\textit{nil}}\cup \cdots \cup I_k^{\textit{nil}}
\end{equation}
where the index set $I_l^{\textit{nil}} \subset I^{\textit{nil}}\ (1\le l\le k)$ consists of indexes of same matrices, 
that is, matrices indexed by elements of  $I^{\textit{nil}}_l$ are same each other 
and two matrices which respectively have indexes in $I^{\textit{nil}}_{l}$ and $I^{\textit{nil}}_{l'}$ where $l\neq l'$ are not equal.
Let $\sharp I_l^{\textit{nil}}$ be the cardinality of $I_l^{\textit{nil}}$,
and let $m_1$ be a maximum value in $\sharp I_1^{\textit{nil}}, \ldots,\sharp I_k^{\textit{nil}}$.  
We put
\begin{equation*}
J_l :=\{ j \in \{ 1, \ldots ,n-1\} \mid M_j \mbox{ is not nilpotent, } M_j*M_i = M_i*M_j=M_i,  i \in I_l^{\textit{nil}} \}.  
\end{equation*}
Here, we define the product $*$ as 
\begin{equation*}
M *
M':= 
\arraycolsep5pt
\left(
\begin{array}{@{\,}c|c@{\,}}
AA'&\\
\hline 
& e 
\end{array}
\right) \in \mathbb{P}M_{3}(\mathbb{C})\quad \mbox{for }
M:=\arraycolsep5pt
\left(
\begin{array}{@{\,}c|c@{\,}}
A&\\
\hline 
& e 
\end{array}
\right)
\mbox{ and }
M':=\arraycolsep5pt
\left(
\begin{array}{@{\,}c|c@{\,}}
A'&\\
\hline 
& e' 
\end{array}
\right).
\end{equation*}
Note that the product $*$ is well-defined in the case where $M$ (resp. $M'$) is nilpotent and $M'$ (resp. $M$) is not nilpotent 
where $M \in \overline{\mathcal{C}}$ and $M' \in \overline{\mathcal{C}'}$.
Let $m_2$ be a maximum value in $\{ \sharp J_l  \mid l  \mbox{ is satisfied } \sharp I^{\textit{nil}}_l=m_1 ,1\le l\le k \}$.
If $I^{\textit{nil}}$ is empty, then we put $m_1=m_2=0$.

\begin{atte}
Let $(M_1,\ldots,M_{n-1}) \in \overline{\mathrm{Rep}_{n,\boldsymbol{k}}}$.
Suppose that $i \in I^{\textit{nil}}$.
We normalize the nilpotent matrix $M_i$:
\begin{equation}\label{nilpotent matrix}
M_i=
\arraycolsep5pt
\left(
\begin{array}{@{\,}cc|c@{\,}}
0&1&\\
0&0&\\
\hline 
&& 0
\end{array}
\right).
\end{equation}
For a matrix $M_j$ ($j\neq i$),
the condition which, by this transformation, the matrix $M_j$ is transformed to the following form 
\begin{equation*}
\begin{pmatrix}
a_j&b_j\\
0&d_i
\end{pmatrix}
\end{equation*}
is equivalent to the condition $M_j*M_i = M_i*M_j=M_i$.
\end{atte}

\begin{prop}\label{stability condition}
\textit{
The point $x=(M_1,\ldots,M_{n-1})$ is semi-stable (resp. stable)  point if and only if $x$ is satisfied the following condition,
\begin{equation}
 n-1 \ge 2 m_1 + m_2    \quad (\mbox{resp.}\ >\ ). 
\end{equation}
}
\end{prop}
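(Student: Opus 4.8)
The plan is to apply the Hilbert--Mumford numerical criterion to the $SL_2(\mathbb{C})$-linearized ample line bundle $L=\bigotimes_{i=1}^{n-1}p_i^*(\mathcal{O}_{\mathbb{P}^4}(1))$ on $\overline{\mathrm{Rep}_{n,\boldsymbol{k}}}$. Thus $x$ is semi-stable (resp. stable) if and only if $\mu^L(x,\lambda)\ge 0$ (resp. $>0$) for every nontrivial one-parameter subgroup $\lambda\colon \mathbb{C}^*\to SL_2(\mathbb{C})$, where $\mu^L(x,\lambda)=-\min\{r : \tilde x_r\neq 0\}$ is the Mumford weight read off from the weight decomposition of a lift $\tilde x$ of $x$. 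Since every one-parameter subgroup of $SL_2(\mathbb{C})$ is conjugate to $t\mapsto\mathrm{diag}(t^m,t^{-m})$ and replacing $\lambda$ by a positive power only rescales $\mu^L$, it suffices to consider $\lambda(t)=g\,\mathrm{diag}(t,t^{-1})\,g^{-1}$; conjugating $x$ in place of $\lambda$, this is the same as fixing $\lambda(t)=\mathrm{diag}(t,t^{-1})$ and letting the eigenline $L=\langle e_1\rangle\subset\mathbb{C}^2$ range over all lines. (As a consistency check, the analogous computation for $(\mathbb{P}^1)^{n}/\!/SL_2$ recovers the classical condition that no point of $\mathbb{P}^1$ carries more than half of the marked points.)

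First I would compute the weights of $\lambda$ on a single factor $\overline{\mathcal{C}_i}\subset\mathbb{P}^4$. Writing the homogeneous coordinates as the entries $(a,b,c,d,e)$ of $\left(\begin{smallmatrix}a&b&\\c&d&\\&&e\end{smallmatrix}\right)$ and using that $\lambda$ acts by conjugation on the $2\times2$ block, one finds the weights $0,2,-2,0,0$ respectively. As $L$ is a tensor product of the $p_i^*\mathcal{O}_{\mathbb{P}^4}(1)$ the weight is additive, $\mu^L(x,\lambda)=\sum_{i=1}^{n-1}\mu_i$, and each $\mu_i=-\min\{\text{weights of the nonzero coordinates of }M_i\}$ takes only the values $-2,0,2$. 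A short case analysis, using the normal form \eqref{nilpotent matrix} of a nilpotent $M_i$ together with the preceding Remark, which identifies the condition $M_j*M_i=M_i*M_j=M_i$ with upper-triangularity, shows that $\mu_i=-2$ exactly when $M_i$ is nilpotent with image line $L$; that $\mu_i=0$ exactly when $M_i$ is non-nilpotent and $L$ is an invariant line; and that $\mu_i=2$ otherwise. Setting $p(L)=\#\{i : M_i\text{ nilpotent with line }L\}$ and $q(L)=\#\{i : M_i\text{ non-nilpotent, }L\text{ invariant}\}$ gives the clean formula
\begin{equation*}
\mu^L(x,\lambda)=2(n-1)-4p(L)-2q(L),
\end{equation*}
so that $x$ is semi-stable (resp. stable) if and only if $n-1\ge 2p(L)+q(L)$ (resp. $>$) for every line $L$.

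It then remains to identify $\max_L\bigl(2p(L)+q(L)\bigr)$ with $2m_1+m_2$ at the stability threshold, and this is the step I expect to be the crux. Since two nilpotent $2\times2$ matrices share an image line if and only if they are projectively equal, the lines $L$ with $p(L)>0$ are precisely the common lines of the groups $I^{\textit{nil}}_l$, with $p=\sharp I^{\textit{nil}}_l$ and $q=\sharp J_l$; hence $2m_1+m_2$ is the value of $2p+q$ on a line carrying a maximal number of equal nilpotents. For a line $L^*$ with $p(L^*)=m_1$ one has $q(L^*)\le m_2$ by the definition of $m_2$, so $2p(L^*)+q(L^*)\le 2m_1+m_2$. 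For any other line $L$ the $m_1$ nilpotents on $L^*$, the $p(L)$ nilpotents on $L$, and the $q(L)$ invariant non-nilpotents on $L$ are pairwise disjoint subsets of the $n-1$ factors, whence $m_1+p(L)+q(L)\le n-1$ and therefore $2p(L)+q(L)\le (n-1)+\bigl(p(L)-m_1\bigr)\le n-1$. Combining the two estimates yields $\max_L\bigl(2p(L)+q(L)\bigr)\le\max(2m_1+m_2,\,n-1)$. Consequently $n-1\ge 2m_1+m_2$ forces $n-1\ge 2p(L)+q(L)$ for every $L$, giving semi-stability, while $n-1<2m_1+m_2$ is witnessed by $L^*$ as an instability; running the same argument with strict inequalities settles the stable case. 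The delicate points are the exact weight bookkeeping in the case analysis (so that the three values $-2,0,2$ align with the sets $I^{\textit{nil}}_l$ and $J_l$ through the Remark) and the disjointness count above, which is what confines the worst one-parameter subgroup to a maximal nilpotent line and thereby forces the threshold to be governed by $2m_1+m_2$ rather than by the a priori larger $\max_L(2p+q)$.
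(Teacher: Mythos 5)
Your proof follows essentially the same route as the paper: the Hilbert--Mumford criterion applied to the diagonal one-parameter subgroup, the additive weight count on the Segre factors giving $\mu=2(n-1)-4p(L)-2q(L)$ as the line $L$ varies, and the identification of the worst line with one carrying a maximal group of coincident nilpotents, so that the threshold is $2m_1+m_2$. The one point on which both your argument and the paper's are silent is the degenerate case $I^{\textit{nil}}=\emptyset$, where strict stability additionally requires that the non-nilpotent $M_i$ admit no common invariant line; this is ensured by the genericity of $\boldsymbol{k}$ via the trace relation, but is asserted rather than proved in the paper as well.
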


\begin{proof}
For any integer $r>0$, let $\lambda_r$ be the 1-parameter subgroup (1-PS) of $SL_2(\mathbb{C})$ given by
\begin{equation}\label{1-PS def}
\lambda_r \colon t  \longmapsto \begin{pmatrix}
t^r &0\\
0&t^{-r}
\end{pmatrix}, t \in\mathbb{C}^{\times}.
\end{equation}
The matrix $\lambda_r(t)$ acts on $\overline{\mathrm{Rep}_{n,\boldsymbol{k}}}$ as follows.
\begin{align*}
\begin{pmatrix}
t^r&0\\
0&t^{-r}
\end{pmatrix}
\curvearrowright
&\left(
\arraycolsep5pt
\left(
\begin{array}{@{\,}cc|c@{\,}}
a_1&b_1&\\
c_1&d_1&\\
\hline 
&& e_1
\end{array}
\right),
\ldots,
\arraycolsep5pt
\left(
\begin{array}{@{\,}cc|c@{\,}}
a_{n-1} & b_{n-1} &\\
 c_{n-1} & d_{n-1} &\\
\hline 
&& e_{n-1}
\end{array}
\right)
\right)\\
&\longmapsto
\left(
\arraycolsep5pt
\left(
\begin{array}{@{\,}cc|c@{\,}}
a_1&t^{2r} b_1&\\
t^{-2r} c_1&d_1&\\
\hline 
&& e_1
\end{array}
\right),
\ldots,
\arraycolsep5pt
\left(
\begin{array}{@{\,}cc|c@{\,}}
a_{n-1} & t^{2r} b_{n-1} &\\
t^{-2r} c_{n-1} & d_{n-1} &\\
\hline 
&& e_{n-1}
\end{array}
\right)
\right).
\end{align*}

We put $n':=5^{n-1}$.
Let $\mathbb{A}^{n'}$ be the affine cone over the projective space $\mathbb{P}^{n'-1}$ which is the target space of the Segre embedding.
We take a base change of the affine cone $\mathbb{A}^{n'}$ via $\overline{\mathrm{Rep}_{n,\boldsymbol{k}}} \hookrightarrow \mathbb{P}^{n'-1}$, 
denoted by the same notation $\mathbb{A}^{n'}$. 
Let $x^*=(M_1^*,\ldots, M_{n-1}^*)$ be the closed point of $\mathbb{A}^{n'}$ lying over $x \in \overline{\mathrm{Rep}_{n,\boldsymbol{k}}}$, that is,
$x^* \neq 0$ and $x^*$ projects to $x$.
The action (\ref{action for proj}) and the linearization $L$ define a linear action of $SL_2(\mathbb{C})$ on $\mathbb{A}^{n'}$.
In particular, the matrix $\lambda_r(t)$ acts on $\mathbb{A}^{n'}$ as follows.
For each $i=1,\ldots,n-1$, let $e_1^{(i)},\ldots,e_5^{(i)}$ be a basis of $\mathbb{A}^{5}$ such that the matrix 
\begin{equation*}
M_i^* =\left(
\begin{array}{@{\,}cc|c@{\,}}
a_i&b_i&\\
c_i&d_i&\\
\hline 
&& e_i
\end{array}
\right) 
\end{equation*}
 is describe by
\begin{equation*}
M_i^* = a_i e_1^{(i)}+ b_i e_2^{(i)}+c_i e_3^{(i)}+d_i e_4^{(i)}+e_i e_5^{(i)}.
\end{equation*}
Let $e_{i_1,\ldots,i_{n-1}}$ be the base
$ e^{(1)}_{i_1}\otimes \cdots \otimes e^{(n-1)}_{i_{n-1}}$ of $\mathbb{A}^{n'}$ where $i_1,\ldots,i_{n-1} \in \{1,\ldots,5  \}$.
Then, the action of $\lambda_r(t)$ on $\mathbb{A}^{5}$ is given by
\begin{equation*}
\lambda_r(t) \cdot e_{i_1,\ldots,i_{n-1}}  = t^{2r(r^+_{i_1,\ldots,i_{n-1}}-r^-_{i_1,\ldots,i_{n-1}})} e_{i_1,\ldots,i_{n-1}}
\end{equation*}
where $i_1,\ldots,i_{n-1} \in \{1,\ldots,5  \}$ and $r^+_{i_1,\ldots,i_{n-1}}$ (resp. $r^-_{i_1,\ldots,i_{n-1}}$) is the number of $2$ (resp. $3$) in the index set $\{  i_1,\ldots,i_{n-1} \}$.
For $x^*\in \mathbb{A}^{n'}$ lying over $x \in \overline{\mathrm{Rep}_{n,\boldsymbol{k}}}$,
we write $x^*=\sum x^*_{i_1,\ldots,i_{n-1}} e_{i_1,\ldots,i_{n-1}}$, so that 
\begin{equation*}
\lambda_r(t) \cdot x^*  = \sum t^{ 2r r_{i_1,\ldots,i_{n-1}}}  x^*_{i_1,\ldots,i_{n-1}} e_{i_1,\ldots,i_{n-1}}
\end{equation*}
where $r_{i_1,\ldots,i_{n-1}}= r^+_{i_1,\ldots,i_{n-1}}-r^-_{i_1,\ldots,i_{n-1}}$,
and we put
\begin{equation}
\begin{aligned}
\mu^L(x,\lambda_r) := &\ \max \{ -r_{i_1,\ldots,i_{n-1}} \mid {i_1,\ldots,i_{n-1}} \mbox{ such that } x_{i_1,\ldots,i_{n-1}}^*\neq 0 \} \\
= &\ \sharp \left\{ i\ \middle|\ M_i=\begin{pmatrix}
a_i&b_i \\
c_i&d_i
\end{pmatrix}, c_i\neq0 \right\}-\sharp \left\{ i\ \middle|\ M_i=\begin{pmatrix}
0&1 \\
0&0
\end{pmatrix},e_i=0 \right\}.
\end{aligned}
\end{equation}
On the other hand, we have
\begin{align*}
&\sharp \left\{ i\ \middle|\ M_i=\begin{pmatrix}
a_i&b_i \\
c_i&d_i
\end{pmatrix}, c_i\neq0 \right\}  \\
&=(n-1)- \sharp \left\{ i\ \middle|\ M_i=\begin{pmatrix}
0&1 \\
0&0
\end{pmatrix},e_i=0 \right\}-
\sharp \left\{ i\ \middle|\ M_i=\begin{pmatrix}
a_i&b_i \\
0&d_i
\end{pmatrix}, e_i \neq 0 \right\}.
\end{align*}
Then, we have 
\begin{equation}
\begin{aligned}
&\mu^L(x,\lambda_r)\\
&\ = (n-1) - 2\sharp \left\{ i\ \middle|\ M_i=\begin{pmatrix}
0&1 \\
0&0
\end{pmatrix},e_i=0 \right\} 
- \sharp \left\{ i\ \middle|\ M_i=\begin{pmatrix}
a_i&b_i \\
0&d_i
\end{pmatrix}, e_i \neq 0 \right\}.
\end{aligned}
\end{equation}

By the Hilbert-Mumford criterion (see \cite[Theorem 2.1]{MFK} or \cite[Proposition 4.11]{Newstead}), the point $x$ is stable (resp. semi-stable) for this action 
if and only if $\mu^L(g\cdot x,\lambda_r) > 0$ (resp. $\ge0$) for every $g \in SL_2(\mathbb{C})$ and every 1-PS $\lambda_r$ of the form (\ref{1-PS def}).
If the point $x$ satisfies the condition $2m_1 < \sharp I^{\textit{nil}}$, then we have $\mu^L(g\cdot x,\lambda_r) > 0$ for any $g \in SL_2(\mathbb{C})$.
On the other hand, we consider the case where the point $x$ satisfies the condition $2m_1 \ge \sharp I^{\textit{nil}}$. 
There are at most two components of the decomposition (\ref{decomp index set}) of $I^{\textit{nil}}$ such that the cardinalities are $m_1$.
We denote by $I_{\textit{max}}^{\textit{nil}}$ the union of the components.
If the index set $I^{\textit{nil}} \setminus I_{max}^{\textit{nil}}$ is nonempty,
then we have $\mu^L(g\cdot x,\lambda_r) > 0$ for $g \in SL_2(\mathbb{C})$ 
such that $gM_ig^{-1}$ is the matrix (\ref{nilpotent matrix}) where $i \in I^{\textit{nil}} \setminus I_{max}^{\textit{nil}}$.
For $g \in SL_2(\mathbb{C})$ such that $gM_ig^{-1}$ is the matrix (\ref{nilpotent matrix}) where $i \in I_{max}^{\textit{nil}}$, we have
\begin{equation}\label{n-1-2m1+m2}
\mu^L(g\cdot x,\lambda_r) \ge (n-1) -( 2 m_1 + m_2).
\end{equation}
If the index $i \in I_{max}^{\textit{nil}}$ of the normalized matrix is a element of $I_{l}^{\textit{nil}}$ such that $\sharp I_{l}^{\textit{nil}}=m_1$ and $\sharp J_{l}=m_2$,
then the equality of (\ref{n-1-2m1+m2}) holds.
For the other matrix $g\in SL_2(\mathbb{C})$, we have $\mu^L(g\cdot x,\lambda_r) > 0$.
We have thus proved the proposition.
\end{proof}

We obtain a compactification of the character variety $\mathcal{R}_{n,\boldsymbol{k}}$.
\begin{dfn}
\begin{equation*}
\overline{\mathcal{R}_{n,\boldsymbol{k}}}:=\mathrm{Proj}\ H^0(\overline{\mathrm{Rep}_{n,\boldsymbol{k}}},L^{\otimes r})^{Ad(SL_2(\mathbb{C}))}.
\end{equation*}
\end{dfn}
The variety $\overline{\mathcal{R}_{n,\boldsymbol{k}}}$ is a projective algebraic variety.
This variety may have singular points on the boundary. 
Then, we should take a resolution of singular points of $\overline{\mathcal{R}_{n,\boldsymbol{k}}}$.  
In general, it is not easy to give a systematic resolution of singularities for any $n$.
On the following sections, we treat the cases for $n=4, 5$.
We will show that $\overline{\mathcal{R}_{n,\boldsymbol{k}}}$ is non-singular and the boundary divisor is a triangle of $\mathbb{P}^1$.
On Section \ref{section n=5}, we will treat the case for $n=5$.

\section{$n=4$}\label{section n=4}
Let
\begin{equation}
\left(
\arraycolsep5pt
\left(
\begin{array}{@{\,}cc|c@{\,}}
a_1&b_1&\\
c_1&d_1&\\
\hline 
&& e_1
\end{array}
\right),
\arraycolsep5pt
\left(
\begin{array}{@{\,}cc|c@{\,}}
a_2& b_2& \\
c_2& d_2&  \\
\hline 
&&e_2 
\end{array}
\right),
\arraycolsep5pt
\left(
\begin{array}{@{\,}cc|c@{\,}}
a_3 & b_3 &\\
c_3 & d_3 &\\
\hline 
&& e_3
\end{array}
\right)
\right)
\in \overline{\mathrm{Rep}_{4,\boldsymbol{k}}}.
\end{equation}
The compactification $\overline{\mathrm{Rep}_{4,\boldsymbol{k}}}$ is defined by the following equations in $\mathbb{P}^4\times \mathbb{P}^4 \times \mathbb{P}^4$
\begin{equation}
a_i+d_i=k_ie_i,\ (i=1,2,3),
\end{equation}
\begin{equation}
a_id_i-b_ic_i=e_i^2,\ (i=1,2,3),
\end{equation}
\begin{equation}\label{Tr equation n=4}
\mathrm{Tr}\left(\begin{pmatrix}
a_1&b_1\\
c_1&d_1
\end{pmatrix}
\begin{pmatrix}
a_2&b_2\\
c_2&d_2
\end{pmatrix}
\begin{pmatrix}
a_3&b_3\\
c_3&d_3
\end{pmatrix}\right)=k_4e_1e_2e_3.
\end{equation}

We analyze the stability.
If $e_i=0$ and $e_je_k\neq0\ ( j,k \in \{1,2,3\}\setminus \{i\} )$, then
$x$ is an unstable point if and only if $x$ is a point of the orbit of $(M_1,M_2,M_3)$ where
\begin{equation*}
M_i=
\arraycolsep5pt
\left(
\begin{array}{@{\,}cc|c@{\,}}
0&1&\\
0&0&\\
\hline 
&& 0
\end{array}
\right),
M_j=
\arraycolsep5pt
\left(
\begin{array}{@{\,}cc|c@{\,}}
a_j& b_j& \\
0& d_j&  \\
\hline 
&&e_j 
\end{array}
\right),
M_k=
\arraycolsep5pt
\left(
\begin{array}{@{\,}cc|c@{\,}}
a_k & b_k &\\
0 & d_k &\\
\hline 
&& e_k
\end{array}
\right).
\end{equation*}
If $e_i=0, e_j=0$, then $x$ is an unstable point if and only if $x$ is a point of the orbit of $(M_1,M_2,M_3)$ 
where two matrices in $M_1,M_2,M_3$ are 
\begin{equation*}
\arraycolsep5pt
\left(
\begin{array}{@{\,}cc|c@{\,}}
0&1&\\
0&0&\\
\hline 
&& 0
\end{array}
\right).
\end{equation*}

\begin{lem}
\textit{
The point $x \in \overline{\mathrm{Rep}_{4,\boldsymbol{k}}}$ is stable if and only if $x$ is semistable. 
}
\end{lem}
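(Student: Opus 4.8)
The plan is to invoke the stability criterion from Proposition \ref{stability condition} and show that, for $n=4$, the semistable-but-not-stable case is \emph{vacuous}. Recall that a point $x=(M_1,M_2,M_3)$ is semistable iff $n-1 \ge 2m_1+m_2$ and stable iff $n-1 > 2m_1+m_2$; the only way semistability can fail to imply stability is if there exists a semistable point with $2m_1+m_2 = n-1 = 3$. So the entire lemma reduces to checking that the equality $2m_1+m_2=3$ is incompatible with membership in $\overline{\mathrm{Rep}_{4,\boldsymbol{k}}}$, i.e. with the defining trace relation (\ref{Tr equation n=4}).

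First I would enumerate the integer solutions of $2m_1+m_2=3$ subject to the constraints $m_1,m_2\ge 0$ and $m_1+m_2 \le n-1 = 3$ (since the nilpotent indices counted by $m_1$ and the ``commuting'' indices counted by $m_2$ are disjoint subsets of $\{1,2,3\}$). The candidate solutions are $(m_1,m_2)=(1,1)$ and $(m_1,m_2)=(0,3)$. The case $(0,3)$ is immediately excluded because $m_2>0$ forces $m_1>0$ by the very definition of $J_l$ (the set $J_l$ is defined relative to a nonempty $I_l^{\textit{nil}}$, so $m_2=0$ whenever $m_1=0$). Thus only $(m_1,m_2)=(1,1)$ survives, meaning exactly one matrix is nilpotent and exactly one of the remaining two matrices fixes it under the product $*$.

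Next I would analyze the surviving configuration directly using the normalization in the Remark following Definition of $\overline{\mathrm{Rep}_{n,\boldsymbol{k}}}$. After conjugating so that the nilpotent matrix, say $M_i$, equals $\left(\begin{smallmatrix} 0&1\\ 0&0\end{smallmatrix}\right)$ with $e_i=0$, the condition $M_j * M_i = M_i * M_j = M_i$ for the index $j\in J_l$ forces $M_j$ to be upper-triangular, of the form $\left(\begin{smallmatrix} a_j&b_j\\ 0&d_j\end{smallmatrix}\right)$ with $e_j\neq 0$. The remaining third matrix $M_k$ is unconstrained by $m_1,m_2$ but has $e_k\neq 0$ (it is not nilpotent, else $m_1$ would be larger or $I^{\textit{nil}}$ would change). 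The plan is then to substitute this explicit triangular-plus-nilpotent triple into the trace relation (\ref{Tr equation n=4}) and read off a contradiction: the left side $\mathrm{Tr}(A_1 A_2 A_3)$ will have a factor structure forced by the nilpotent entry, while the right side $k_4 e_1 e_2 e_3$ vanishes because $e_i=0$. Carefully tracking which entries survive should show the equality cannot hold for a point actually lying in $\overline{\mathrm{Rep}_{4,\boldsymbol{k}}}$, unless the putative point degenerates into one of the strictly-unstable configurations already excluded in the stability analysis preceding the lemma.

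The main obstacle I anticipate is the last step: verifying that no genuine point of the compactified representation variety realizes the borderline value $2m_1+m_2=3$. This requires checking the trace equation against the normalized matrices rather than treating the numerics $m_1,m_2$ abstractly, because a priori the Hilbert--Mumford count only tells us about the $SL_2(\mathbb{C})$-orbit and its closure, not about the defining equations. One must confirm that any point attaining the borderline count would have to lie in the boundary locus where $M_i$ and $M_j$ are forced into the specific unstable forms described just before the lemma (two matrices equal to the standard nilpotent, or one nilpotent with the others upper-triangular), and that such loci are precisely the \emph{strictly unstable} ones—so they are excluded from the semistable set altogether. Once this compatibility check is complete, semistability forces the strict inequality $3 > 2m_1+m_2$, which is exactly stability, proving the lemma.
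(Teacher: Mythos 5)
Your overall strategy coincides with the paper's: reduce a properly semistable point to a normalized form and show it cannot satisfy the trace relation (\ref{Tr equation n=4}). The one configuration you do treat --- one nilpotent matrix, one upper-triangular companion in $J_l$ with $e_j\neq 0$, and a third matrix with nonzero lower-left entry --- is exactly the paper's first case, and your planned trace computation goes through there: up to cyclic order the product has trace $c_j a_k$ or $c_j d_k$, which is nonzero because $a_kd_k=e_k^2\neq0$, while the right-hand side $k_4e_1e_2e_3$ vanishes.

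However, there is a genuine gap in your enumeration. From $(m_1,m_2)=(1,1)$ you conclude that ``exactly one matrix is nilpotent,'' justifying the non-nilpotency of the third matrix by ``else $m_1$ would be larger or $I^{\textit{nil}}$ would change.'' This is false: $m_1$ is the maximal cardinality of a block $I_l^{\textit{nil}}$ of \emph{mutually equal} nilpotent matrices in the decomposition (\ref{decomp index set}), so a second nilpotent matrix that is \emph{distinct} from $M_i$ as a point of $\mathbb{P}(M_3(\mathbb{C}))$ forms its own singleton block and leaves $m_1=1$; if the unique non-nilpotent matrix is upper-triangular with respect to one of the two nilpotents, then $m_2=1$ and $2m_1+m_2=3$ still holds. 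This is precisely the paper's second properly semistable configuration, which after normalization reads
\begin{equation*}
M_i=
\arraycolsep5pt
\left(
\begin{array}{@{\,}cc|c@{\,}}
0&1&\\
0&0&\\
\hline
&& 0
\end{array}
\right),\quad
M_j=
\arraycolsep5pt
\left(
\begin{array}{@{\,}cc|c@{\,}}
0& 0& \\
1& 0&  \\
\hline
&& 0
\end{array}
\right),\quad
M_k=
\arraycolsep5pt
\left(
\begin{array}{@{\,}cc|c@{\,}}
a_k & b_k &\\
0 & d_k &\\
\hline
&& e_k
\end{array}
\right),
\end{equation*}
and your case analysis silently discards it. The lemma survives because this configuration also violates (\ref{Tr equation n=4}): the trace of the product is $a_k$ or $d_k$, nonzero since $a_kd_k=e_k^2\neq0$, while the right-hand side is $0$. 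But the proof is incomplete until this second case is stated and disposed of.
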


\begin{proof}
The point $x=(M_1,M_2,M_3)$ is not stable if only $x$ is normalized as follows.
\begin{equation*}
M_i=
\arraycolsep5pt
\left(
\begin{array}{@{\,}cc|c@{\,}}
0&1&\\
0&0&\\
\hline 
&& 0
\end{array}
\right),
M_j=
\arraycolsep5pt
\left(
\begin{array}{@{\,}cc|c@{\,}}
a_j& b_j& \\
c_j& d_j&  \\
\hline 
&&e_j 
\end{array}
\right),
M_k=
\arraycolsep5pt
\left(
\begin{array}{@{\,}cc|c@{\,}}
a_k & b_k &\\
0 & d_k &\\
\hline 
&& e_k
\end{array}
\right), \mbox{ where } c_j\neq0,
\end{equation*}
or
\begin{equation*}
M_i=
\arraycolsep5pt
\left(
\begin{array}{@{\,}cc|c@{\,}}
0&1&\\
0&0&\\
\hline 
&& 0
\end{array}
\right),
M_j=
\arraycolsep5pt
\left(
\begin{array}{@{\,}cc|c@{\,}}
0& 0& \\
1& 0&  \\
\hline 
&& 0
\end{array}
\right),
M_k=
\arraycolsep5pt
\left(
\begin{array}{@{\,}cc|c@{\,}}
a_k & b_k &\\
0 & d_k &\\
\hline 
&& e_k
\end{array}
\right).
\end{equation*}
However, the matrices are not satisfied the equation (\ref{Tr equation n=4}).
Then, there are no strictly semistable points.
\end{proof}

The following theorem shows that our compactification $\overline{\mathcal{R}_{4,\boldsymbol{k}}}$ of $\mathcal{R}_{4,\boldsymbol{k}}$ has the same configuration 
of the boundary divisor as the natural compactification of the Fricke-Klein cubic surface.

\begin{thm} 
\textit{
The boundary divisor of the compactification $\overline{\mathcal{R}_{4,\boldsymbol{k}}}$ is a triangle of three projective lines.
}
\end{thm}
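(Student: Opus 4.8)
The plan is to describe the boundary $\partial := \overline{\mathcal{R}_{4,\boldsymbol{k}}}\setminus \mathcal{R}_{4,\boldsymbol{k}}$ directly as a geometric quotient and to exhibit its three components. Since stability and semistability coincide (the preceding lemma), $\overline{\mathcal{R}_{4,\boldsymbol{k}}}$ is the geometric quotient of the stable locus, which I denote $\overline{\mathrm{Rep}_{4,\boldsymbol{k}}}^{s}$, by $SL_2(\mathbb{C})$; the open subset where $e_1e_2e_3\neq 0$ is exactly $\mathcal{R}_{4,\boldsymbol{k}}$. Hence $\partial$ is the image of the closed locus $V=\{e_1e_2e_3=0\}\cap \overline{\mathrm{Rep}_{4,\boldsymbol{k}}}^{s}$. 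I would decompose $V=V_1\cup V_2\cup V_3$ with $V_i=\{e_i=0\}\cap\overline{\mathrm{Rep}_{4,\boldsymbol{k}}}^{s}$ (so $M_i$ is nilpotent), and set $D_i:=V_i/SL_2(\mathbb{C})$, with the aim of showing that each $D_i$ is a projective line and that the three lines form a triangle.

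First I would study one component, say $D_1$. On the open part where $e_2e_3\neq 0$ I normalise $M_1=\left(\begin{smallmatrix}0&1\\0&0\end{smallmatrix}\right)$, whose projective stabiliser is the Borel subgroup $B$ of upper-triangular matrices; a direct computation turns the trace relation (\ref{Tr equation n=4}) into the single condition that $M_2M_3$ be upper triangular, i.e. that it fix the line $\ker M_1$. The resulting family of $(M_2,M_3)$ is three-dimensional, and quotienting by the two-dimensional group $B$ gives something one-dimensional. To identify it with $\mathbb{P}^1$ I would analyse the graded ring of invariant sections $\bigoplus_r H^0(\overline{\mathrm{Rep}_{4,\boldsymbol{k}}},L^{\otimes r})^{Ad(SL_2(\mathbb{C}))}$ restricted to $V_1$; concretely the invariant $\mathrm{Tr}(M_2M_3)$, read as a ratio of invariant sections, should provide an affine coordinate separating generic $B$-orbits, and adjoining the two limiting points, where in addition $e_2=0$ or $e_3=0$, should yield $D_1\cong\mathbb{P}^1$. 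The same argument applies verbatim to $D_2$ and $D_3$.

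Next I would locate the vertices. The intersection $D_i\cap D_j$ consists of the stable points with $e_i=e_j=0$; by the explicit description of the unstable locus in Section \ref{section n=4}, such a point is stable exactly when the two nilpotents $M_i,M_j$ are distinct. Using $SL_2(\mathbb{C})$ to put them in the opposite standard forms $\left(\begin{smallmatrix}0&1\\0&0\end{smallmatrix}\right)$ and $\left(\begin{smallmatrix}0&0\\1&0\end{smallmatrix}\right)$ leaves only the diagonal torus $T$, and relation (\ref{Tr equation n=4}) then forces the $(1,1)$-entry of the remaining matrix $M_k$ to vanish; modulo $T$ this pins down a single point. Thus each pair of components meets in exactly one point, and the three resulting points are pairwise distinct since they have distinct vanishing patterns of the $e_i$.

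Finally I would rule out a common triple point. A point of $D_1\cap D_2\cap D_3$ would consist of three nilpotents; writing $u_i$ for the kernel vector of $M_i$ and $\omega$ for the standard symplectic form, so that $M_i$ is a scalar multiple of $x\mapsto \omega(u_i,x)u_i$, a short computation gives $\mathrm{Tr}(M_1M_2M_3)=\omega(u_1,u_2)\,\omega(u_2,u_3)\,\omega(u_3,u_1)$. Since the right-hand side of (\ref{Tr equation n=4}) is $0$ here, two of the nilpotent directions must coincide, so at least two of the matrices are equal; then $m_1\ge 2$ and hence $2m_1+m_2\ge 4>3=n-1$, so the point is unstable by Proposition \ref{stability condition}. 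Therefore $D_1\cap D_2\cap D_3=\varnothing$, and $\partial=D_1\cup D_2\cup D_3$ is a union of three projective lines meeting pairwise in a single point with no common point, that is, a triangle. The main obstacle is the second step: carrying out the non-reductive quotient by $B$ and proving that the invariant $\mathrm{Tr}(M_2M_3)$, together with the two added vertices, really gives an isomorphism $D_i\cong\mathbb{P}^1$ rather than merely a one-dimensional image; the remaining steps are short computations feeding off the stability analysis already in place.
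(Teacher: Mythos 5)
Your overall strategy coincides with the paper's: stratify the boundary by which $e_i$ vanishes, normalise the nilpotent matrix so that the residual group is the Borel and then the torus, and read off the components and their intersections from the stability analysis. Your treatment of the vertices and of the absence of a triple point (via $\mathrm{Tr}(M_1M_2M_3)=\omega(u_1,u_2)\omega(u_2,u_3)\omega(u_3,u_1)$ and Proposition \ref{stability condition}) is correct and in fact makes explicit a point the paper leaves implicit. However, the step you yourself single out as the main obstacle is where the argument genuinely breaks: $\mathrm{Tr}(M_2M_3)$ does \emph{not} separate generic orbits on $D_1$. Concretely, after normalising $M_1=\left(\begin{smallmatrix}0&1\\0&0\end{smallmatrix}\right)$ and using the unipotent radical to put $M_2=\left(\begin{smallmatrix}0&-1/c_2\\ c_2&k_2\end{smallmatrix}\right)$, the trace relation forces $a_3=-k_2c_3/c_2$, and the remaining data is the pair $(c_2,c_3)\in(\mathbb{C}^*)^2$ on which the residual torus acts with equal weights; so the open part of $E_1$ is the $\mathbb{C}^*$ with coordinate $u=c_3/c_2$. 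A direct computation then gives
\begin{equation*}
\mathrm{Tr}(M_2M_3)=-\Bigl(\frac{c_3}{c_2}+\frac{c_2}{c_3}\Bigr)=-(u+u^{-1}),
\end{equation*}
a $2\!:\!1$ function: the orbits labelled by $u$ and $u^{-1}$ are distinct (they differ by which eigenvalue of $M_2M_3$ is attained on the distinguished line $\ker M_1$, an invariant of the Borel action) yet have the same value of $\mathrm{Tr}(M_2M_3)$. So this invariant cannot serve as the affine coordinate on $D_1$.

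The conclusion $E_1\cong\mathbb{P}^1$ is nevertheless true; what is needed is the finer invariant $u$ rather than its symmetrisation. This is exactly what the paper's argument supplies: after identifying each quadric $\overline{\mathcal{C}_i}$ with $\mathbb{P}^1\times\mathbb{P}^1$ via (\ref{isomorphism to P1P1}), the locus $[e_1=0]$ becomes the hypersurface (\ref{boundary eq n=4}) in $\mathbb{P}^1\times(\mathbb{P}^1\times\mathbb{P}^1)$, and the torus-invariant sections of the induced linearization form a three-dimensional space $\langle s_1,s_2,s_3\rangle$ subject to one linear relation, so the quotient is $\mathrm{Proj}$ of a polynomial ring in two variables, i.e.\ $\mathbb{P}^1$. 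If you want to keep your more hands-on route, replace $\mathrm{Tr}(M_2M_3)$ by the ratio $c_3/c_2$ (equivalently, the eigenvalue of $M_2M_3$ on $\ker M_1$) and check directly that it extends to an isomorphism of $D_1$ with $\mathbb{P}^1$ sending the two added vertices to $0$ and $\infty$; with that repair the rest of your proof goes through.
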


\begin{proof}
We describe the boundary divisor explicitly.
Let $E_i$ be the image of the divisor $[e_i=0]$ on $\overline{\mathrm{Rep}_{4,\boldsymbol{k}}}$ 
by the quotient $\overline{\mathrm{Rep}_{4,\boldsymbol{k}}}\rightarrow \overline{\mathcal{R}_{4,\boldsymbol{k}}}$ ($i=1,2,3$).
First, we describe $[e_1=0]$.
We normalize $M_1$ by the $SL_2(\mathbb{C})$-conjugate action as the matrix (\ref{nilpotent matrix}).
The stabilizer subgroup of the matrix is 
$\left\{ 
\begin{pmatrix}
a&b\\
0&1/a
\end{pmatrix}
\right\}$.

By the stability, we obtain $c_{2}\neq 0$ and $ c_{3}\neq 0$. 
Since $c_{2}\neq 0$,
the matrices of the component $[e_1=0]$ are normalized by the action of this stabilizer subgroup:
\begin{equation}\label{normalization n=4}
\left(
\arraycolsep5pt
\left(
\begin{array}{@{\,}cc|c@{\,}}
0&1&\\
0&0&\\
\hline 
&& 0
\end{array}
\right),
\arraycolsep5pt
\left(
\begin{array}{@{\,}cc|c@{\,}}
0&-e_2^2&\\
c_{2}^2& k_2 c_{2} e_2 \\
\hline 
&& c_{2} e_2 
\end{array}
\right),
\arraycolsep5pt
\left(
\begin{array}{@{\,}cc|c@{\,}}
a_{3}&b_{3}&\\
c_{3}&d_{3}&\\
\hline 
&& e_3 
\end{array}
\right)
\right).
\end{equation}
The stabilizer subgroup of the normalized matrices is the torus group 
$\left\{ \begin{pmatrix}
a&0\\
0&a^{-1}
\end{pmatrix} \right\} $.

Before we consider the quotient by the torus group, 
we consider the normalized matrices (\ref{normalization n=4}).
The normalized matrices are defined by the following equations
\begin{equation}\label{n=4 D1}
\left\{
\begin{aligned}
&a_3+d_{3}=k_3 e_3, \\
&a_{3}d_{3}-b_{3}c_{3}=e_3^2,\\
&c_{2}a_{3}+k_2e_2c_{3}=0
\end{aligned}
\right.
\end{equation}
in the Zariski open set $c_2c_3 \neq 0$ of $\mathbb{P}^1 \times \mathbb{P}^4$.
By the equations $a_3+d_{3}=k_3 e_3$ and $a_{3}d_{3}-b_{3}c_{3}=e_3^2$, we obtain the equation
\begin{equation*}
(-a_3^2+ k_3a_{3}e_{3}-e_3^2)-b_{3}c_{3}=0.
\end{equation*}
Note that the equation define a hypersurface of degree 2 in $\mathbb{P}^3$, which is isomorphic to $\mathbb{P}^1\times \mathbb{P}^1$.
We put the coordinate $([S_3:T_3],[U_3:V_3]) \in \mathbb{P}^1\times \mathbb{P}^1$ such that
\begin{align*}
(S_3U_3)(T_3V_3)&=-a_3^2+k_3a_{3}e_{3}-e_3^2=-(a_3-\alpha_3^{+} e_3 )(a_3-\alpha_3^{-} e_3 )\\
S_3V_3&=b_{3}\\
T_3U_3&=c_{3}
\end{align*}
where $\alpha_i^{+},\alpha_i^{-}$ are eigenvalues of a matrix of the semisimple conjugacy class $\mathcal{C}_i$.
Then, we obtain the following transformation from $\mathbb{P}^1\times \mathbb{P}^1$ to the hypersurface of degree $2$ on $\mathbb{P}^3$:
\begin{equation}\label{isomorphism to P1P1}
\begin{aligned}
a_3&=\frac{\alpha_3^{-} S_3U_3+\alpha_3^{+} T_3V_3}{\alpha_3^{+}-\alpha_3^{-}}, &  b_3&=S_3V_3,\\
c_3&=T_3U_3, & d_3&=\frac{\alpha_3^{+} S_3U_3+\alpha_3^{-} T_3V_3}{\alpha_3^{+}-\alpha_3^{-}}, \\
e_3&=\frac{ S_3U_3+T_3V_3}{\alpha_3^{+}-\alpha_3^{-}}.
\end{aligned}
\end{equation}

Therefore, 
the normalized matrices are defined by 
\begin{equation}\label{boundary eq n=4}
c_{2}(\alpha_3^{-} S_3U_3 + \alpha_3^{+}T_3V_3)+k_2(\alpha_3^+-\alpha_3^-)e_2(T_3U_3)=0
\end{equation}
in the Zariski open set $c_2T_3U_3\neq 0$ of $\mathbb{P}^1 \times (\mathbb{P}^1\times \mathbb{P}^1)$.

We consider the quotient by the torus group. 
The torus action on $\mathbb{P}^1 \times (\mathbb{P}^1\times \mathbb{P}^1)$ is
\begin{equation*}
\begin{aligned}
\begin{pmatrix}
a&0\\
0&a^{-1}
\end{pmatrix}
\curvearrowright
&([c_2:e_2],[S_3:T_3],[U_3:V_3])\\
& \longmapsto ([a^{-1}c_2:ae_2],[aS_3:a^{-1}T_3],[a^{-1}U_3:aV_3]).
\end{aligned}
\end{equation*}
We consider the $SL_2(\mathbb{C})$-linearization $L=\bigotimes_{i=1}^3 p_i^*(\mathcal{O}_{\mathbb{P}^4}(1))$ on $\overline{\mathrm{Rep}_{4,\boldsymbol{k}}}$.
We take a pull-back of $L$ via the embedding
\begin{equation}
p_{e_1}\colon \mathbb{P}^1 \times (\mathbb{P}^1\times \mathbb{P}^1) \hookrightarrow \overline{\mathrm{Rep}_{4,\boldsymbol{k}}}
\end{equation}
 defined by the matrices (\ref{normalization n=4}) and the transform (\ref{isomorphism to P1P1}).
Let $L_{e_1}$ be the pull-back of $L$ on $\mathbb{P}^1 \times (\mathbb{P}^1\times \mathbb{P}^1)$.
We obtain the $T$-linearization on $L_{e_1}$ induced by the $SL_2(\mathbb{C})$-linearization $L$ on $\overline{\mathrm{Rep}_{4,\boldsymbol{k}}}$.
We consider the dual action on $H^0(\mathbb{P}^1 \times (\mathbb{P}^1\times \mathbb{P}^1),L_{e_1})$.
We have the following basis of the subspace consisting of invariant sections:
\begin{equation}
\begin{aligned}
&s_1=b_1\otimes c^2_2\otimes S_3U_3,& s_2=b_1 \otimes c^2_2\otimes T_3V_3,\\
& s_3=b_1\otimes c_2 e_2\otimes T_3U_3 &
\end{aligned}
\end{equation}
where $b_1\in H^0(\mathbb{P}^1 \times (\mathbb{P}^1\times \mathbb{P}^1), (p_{e_1}\circ p_1)^*(\mathcal{O}_{\mathbb{P}^4}(1)))$ corresponding to the $(1,2)$-entry of the matrix $M_1$.
The sections have the relation 
\begin{equation*}
\alpha_3^{-} s_1 + \alpha_3^+s_2+k_2(\alpha_3^+-\alpha_3^-)s_3=0
\end{equation*}
by the equation (\ref{boundary eq n=4}).
Therefore, we obtain $E_1\cong\mathbb{P}^1$.
In the same way, we also obtain $E_i\cong\mathbb{P}^1$ ($i=2,3$).

We show that $E_1$ and $E_2$ intersect at one point.
We substitute $e_2=0$ for (\ref{n=4 D1}). 
Then, we have the following equations 
\begin{equation*}
\left\{
\begin{aligned}
&a_{3}+d_{3}=k_3 e_3, \\
&a_{3}d_{3}-b_{3}c_{3}=e_3^2,\\
&a_{3}=0.
\end{aligned}
\right.
\end{equation*}
The locus defined by the equations above is a quadric curve in $\mathbb{P}^2$, which is isomorphic to $\mathbb{P}^1$.
There are two unstable points in the locus, $[b_{3}:c_{3}:e_3]=[0:1:0]$ and $[b_{3}:c_{3}:e_3]=[1:0:0]$.
The intersection is the quotient of $\mathbb{P}^1$ minus the two points by the torus action.
Then, the intersection is a point.
In the same way, the intersection of $E_2$ and $E_3$ (resp. $E_3$ and $E_1$) is a point.
\end{proof}

\section{$n=5$}\label{section n=5}
Let
\begin{equation*}
\left(
\arraycolsep5pt
\left(
\begin{array}{@{\,}cc|c@{\,}}
a_1&b_1&\\
c_1&d_1&\\
\hline 
&& e_1
\end{array}
\right),
\arraycolsep5pt
\left(
\begin{array}{@{\,}cc|c@{\,}}
a_2&b_2&\\
c_2&d_2&\\
\hline 
&& e_2
\end{array}
\right),
\arraycolsep5pt
\left(
\begin{array}{@{\,}cc|c@{\,}}
a_3& b_3& \\
c_3& d_3&  \\
\hline 
&&e_3
\end{array}
\right),
\arraycolsep5pt
\left(
\begin{array}{@{\,}cc|c@{\,}}
a_4 & b_4 &\\
c_4 & d_4 &\\
\hline 
&& e_4
\end{array}
\right)
\right)
\in \overline{\mathrm{Rep}_{5,\boldsymbol{k}}}.
\end{equation*}
The compactification $\overline{\mathrm{Rep}_{5,\boldsymbol{k}}}$ is defined by the following equations in $(\mathbb{P}^4)^4$
\begin{equation}
a_i+d_i=k_ie_i,\ (i=1,2,3,4),
\end{equation}
\begin{equation}
a_id_i-b_ic_i=e_i^2,\ (i=1,2,3,4),
\end{equation}
\begin{equation}\label{Tr condition of n=5}
\mathrm{Tr}\left(\begin{pmatrix}
a_1&b_1\\
c_1&d_1
\end{pmatrix}
\begin{pmatrix}
a_2&b_2\\
c_2&d_2
\end{pmatrix}
\begin{pmatrix}
a_3&b_3\\
c_3&d_3
\end{pmatrix}
\begin{pmatrix}
a_4&b_4\\
c_4&d_4
\end{pmatrix}\right)=k_5e_1e_2e_3e_4.
\end{equation}

We consider the stability condition.
\begin{lem}
\textit{
The closures of orbits of properly semistable points contain the point
\begin{equation}\label{properly semistable1}
s_1= \left(
\arraycolsep5pt
\left(
\begin{array}{@{\,}cc|c@{\,}}
0&1&\\
0&0&\\
\hline 
&& 0
\end{array}
\right),
\arraycolsep5pt
\left(
\begin{array}{@{\,}cc|c@{\,}}
0&1&\\
0&0&\\
\hline 
&& 0
\end{array}
\right),
\arraycolsep5pt
\left(
\begin{array}{@{\,}cc|c@{\,}}
0& 0& \\
1& 0&  \\
\hline 
&&0 
\end{array}
\right),
\arraycolsep5pt
\left(
\begin{array}{@{\,}cc|c@{\,}}
0 & 0 &\\
1 & 0 &\\
\hline 
&& 0
\end{array}
\right)
\right)
\end{equation}
or
\begin{equation}\label{properly semistable2}
s_2= \left(
\arraycolsep5pt
\left(
\begin{array}{@{\,}cc|c@{\,}}
0&1&\\
0&0&\\
\hline 
&& 0
\end{array}
\right),
\arraycolsep5pt
\left(
\begin{array}{@{\,}cc|c@{\,}}
0&0&\\
1&0&\\
\hline 
&& 0
\end{array}
\right),
\arraycolsep5pt
\left(
\begin{array}{@{\,}cc|c@{\,}}
0& 0& \\
1& 0&  \\
\hline 
&&0 
\end{array}
\right),
\arraycolsep5pt
\left(
\begin{array}{@{\,}cc|c@{\,}}
0 & 1 &\\
0 & 0 &\\
\hline 
&& 0
\end{array}
\right)
\right).
\end{equation}
Expect for the points of the orbits of $s_1$ and $s_2$, the stabilizer groups of every points are finite.
Each stabilizer group of the orbits of $s_1$ and $s_2$ is conjugate to the torus group $T=\left\{ \begin{pmatrix}
a&0\\
0&a^{-1}
\end{pmatrix} \right\} $.
}
\end{lem}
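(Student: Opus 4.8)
The plan is to reduce all three assertions to a single classification, namely of the semistable points of $\overline{\mathrm{Rep}_{5,\boldsymbol{k}}}$ whose stabilizer is positive-dimensional. Since $SL_2(\mathbb{C})$ is reductive and $L$ is ample, standard geometric invariant theory tells us that the closure of every semistable orbit contains a unique closed orbit, and that a semistable point fails to be stable exactly when it admits a nontrivial $1$-PS $\lambda$ with $\mu^{L}(x,\lambda)=0$. For such a $\lambda$ the limit $y=\lim_{t\to 0}\lambda(t)\cdot x$ exists in the projective (hence complete) variety $\overline{\mathrm{Rep}_{5,\boldsymbol{k}}}$, lies in $\overline{SL_2(\mathbb{C})\cdot x}$, is again semistable, and is fixed by $\lambda(\mathbb{C}^{\times})$; thus $y$ is a semistable point with positive-dimensional stabilizer. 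So all three assertions follow once we show that the semistable points with positive-dimensional stabilizer are precisely the points of the orbits of $s_1$ and $s_2$, and that the corresponding stabilizers equal $T$.

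First I would classify the $T$-fixed semistable points, noting that after conjugation every nontrivial $1$-PS of $SL_2(\mathbb{C})$ has image the diagonal torus $T$, so a positive-dimensional stabilizer contains a conjugate of $T$. Under the conjugation action of $T$ on $\mathbb{P}(M_3(\mathbb{C}))$ the entries $a_i,d_i,e_i$ have weight $0$, $b_i$ has weight $+2$ and $c_i$ has weight $-2$; hence a point of $\overline{\mathcal{C}_i}$ is $T$-fixed iff all its nonzero coordinates share one weight, i.e. iff it is the nilpotent $N=\left(\begin{smallmatrix}0&1\\0&0\end{smallmatrix}\right)$, the nilpotent $D=\left(\begin{smallmatrix}0&0\\1&0\end{smallmatrix}\right)$, or a matrix with $b_i=c_i=0$, in which case the defining equations of $\overline{\mathcal{C}_i}$ force $e_i\neq 0$ and it is a non-nilpotent diagonal matrix. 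Writing $p,q,s$ for the numbers of factors equal to $N$, to $D$, and to a non-nilpotent diagonal matrix, we have $p+q+s=4$. In the notation of Proposition \ref{stability condition} one gets $m_1=\max(p,q)$, while $m_2=s$, because a non-nilpotent diagonal matrix is upper triangular with respect to $N$ and so satisfies $M_j*N=N*M_j=N$ by the Remark preceding Proposition \ref{stability condition}. The bound $2m_1+m_2\le 4$ then forces $p=q$, so $(p,q,s)\in\{(2,2,0),(1,1,2),(0,0,4)\}$.

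Next I would eliminate the last two cases using the trace relation (\ref{Tr condition of n=5}), whose right-hand side vanishes as soon as some $e_i=0$. In the case $(1,1,2)$ the product $A_1A_2A_3A_4$ contains the single rank-one factor $N$, hence has rank $\le 1$, and in every arrangement its trace is a nonzero monomial in the (nonzero) eigenvalues of the $\mathcal{C}_i$; so (\ref{Tr condition of n=5}) cannot hold and the case does not occur. The case $(0,0,4)$ is a simultaneously diagonal, i.e. reducible, representation, which for generic $\boldsymbol{k}$ is excluded because (\ref{Tr condition of n=5}) reduces to the single eigenvalue identity forbidden by the genericity of $\boldsymbol{k}$. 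This leaves $(p,q,s)=(2,2,0)$, two factors $N$ and two factors $D$; a direct trace computation shows that of the six arrangements only $NNDD$, $NDDN$, $DNND$, $DDNN$ satisfy (\ref{Tr condition of n=5}), the interleaved $NDND$ and $DNDN$ giving trace $1$. The Weyl element $w=\left(\begin{smallmatrix}0&1\\-1&0\end{smallmatrix}\right)$ swaps $N\leftrightarrow D$ projectively, so these four points form exactly two orbits, represented by $s_1=NNDD$ and $s_2=NDDN$; and since $g\cdot s_1$ is $T$-fixed only when $g$ normalizes $T$, the only $T$-fixed points of $SL_2(\mathbb{C})\cdot s_1$ are $s_1$ and $w\cdot s_1=DDNN$, so the orbits of $s_1$ and $s_2$ are distinct.

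Finally I would assemble the statements. For the stabilizers, $T$ fixes $s_1$ and $s_2$ by the weight computation, and any $g$ fixing $s_1$ must preserve both lines $\langle e_1\rangle=\operatorname{im}N$ and $\langle e_2\rangle=\operatorname{im}D$, hence is diagonal, giving $\operatorname{Stab}(s_1)=\operatorname{Stab}(s_2)=T$. The assertion on finiteness of stabilizers is the contrapositive of the classification: a positive-dimensional stabilizer forces $x$ onto the orbit of $s_1$ or $s_2$, so off these two orbits every stabilizer is finite. The assertion on orbit closures follows from the reduction in the first paragraph: a properly semistable $x$ degenerates under a suitable $1$-PS to a semistable $y$ with positive-dimensional stabilizer, and by the classification $y\in SL_2(\mathbb{C})\cdot s_1\cup SL_2(\mathbb{C})\cdot s_2$, whence $s_1$ or $s_2$ lies in $\overline{SL_2(\mathbb{C})\cdot x}$. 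I expect the main obstacle to be the classification step, specifically the trace bookkeeping in case $(1,1,2)$ and among the six $NND$-arrangements, together with invoking the genericity of $\boldsymbol{k}$ cleanly to kill the reducible case $(0,0,4)$, this being the one place where the numerical criterion of Proposition \ref{stability condition} and the actual geometry of $\overline{\mathrm{Rep}_{5,\boldsymbol{k}}}$ must be reconciled.
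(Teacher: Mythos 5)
Your proposal reaches the right conclusion and is organized differently from the paper, so let me first record the comparison. The paper works directly with a properly semistable point $x$: the numerical criterion forces $2m_1+m_2=4$, the case $(m_1,m_2)=(1,2)$ and the interleaved placements in the case $(m_1,m_2)=(2,0)$ are killed by the trace relation (\ref{Tr condition of n=5}), and for the remaining placements the explicit limit of $x$ under the destabilizing one-parameter subgroup is computed to be $s_1$ or $s_2$; finiteness of the other stabilizers is then read off from the fact that the closed orbits have minimal dimension among orbits in their closures. You instead classify the possible limits abstractly as the $T$-fixed semistable points via the weight decomposition of $\overline{\mathcal{C}_i}$; your triples $(p,q,s)=(2,2,0),(1,1,2),(0,0,4)$ match the paper's cases $(m_1,m_2)=(2,0),(1,2)$ plus the reducible case, and your elimination of $(1,1,2)$ and of the interleaved arrangements $NDND$, $DNDN$ is the same trace computation in different clothing. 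Two things your route buys: you explicitly dispose of the simultaneously diagonal case $(0,0,4)$ using genericity of $\boldsymbol{k}$ (the paper silently skips it, even though Proposition \ref{stability condition} as stated would misclassify such a tuple as stable while its stabilizer is $T$), and your normalizer-of-$T$ argument cleanly separates the orbits of $s_1$ and $s_2$. Minor quibble: in case $(1,1,2)$ the product contains two rank-one factors ($N$ and $D$), not one, though your conclusion that every cyclic arrangement has nonzero trace is correct.

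There is, however, one concrete gap, located in the finiteness assertion. You infer that a positive-dimensional stabilizer contains a conjugate of $T$ from the fact that every nontrivial one-parameter subgroup of $SL_2(\mathbb{C})$ has image a conjugate of $T$. That inference fails: the unipotent subgroups $gUg^{-1}$ with $U=\left\{\left(\begin{smallmatrix}1&b\\0&1\end{smallmatrix}\right)\right\}$ are positive-dimensional closed subgroups of $SL_2(\mathbb{C})$ containing no nontrivial torus, so your classification of $T$-fixed points does not by itself exclude a semistable point with unipotent stabilizer lying off the orbits of $s_1$ and $s_2$. The gap is easy to close. Either check directly that a point of $\overline{\mathcal{C}_i}$ fixed projectively by all of $U$ must be upper triangular with $a_i=d_i$, which together with $a_i+d_i=k_ie_i$, $a_id_i-b_ic_i=e_i^2$ and $k_i\neq\pm2$ forces it to be the nilpotent $N$, so that a $U$-fixed tuple is $(N,N,N,N)$ and hence unstable; or argue as the paper does, namely that stable points are properly stable by the Hilbert--Mumford criterion, while a properly semistable point outside $O_{s_1}\cup O_{s_2}$ has a non-closed orbit whose closure contains one of the two-dimensional closed orbits, so its orbit is three-dimensional and its stabilizer finite. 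For the first and third assertions of the lemma your argument is complete as written, since the limit point produced by a destabilizing one-parameter subgroup is genuinely fixed by a torus.
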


\begin{proof}
Let $x = (M_1,\ldots,M_4)$ be a property semistable point.
By Proposition \ref{stability condition}, we have $2m_1 +m_2=4$.
First, we consider the case where $m_1=1,m_2=2$.
We put
\begin{equation}
M_{i_1}= 
\arraycolsep5pt
\left(
\begin{array}{@{\,}cc|c@{\,}}
0&1&\\
0&0&\\
\hline 
&& 0
\end{array}
\right),
M_{i_2}=
\arraycolsep5pt
\left(
\begin{array}{@{\,}cc|c@{\,}}
*&*&\\
0&*&\\
\hline 
&& *
\end{array}
\right),
M_{i_3}=
\arraycolsep5pt
\left(
\begin{array}{@{\,}cc|c@{\,}}
*&*&\\
0&*&\\
\hline 
&& *
\end{array}
\right),
M_{i_4}=
\arraycolsep5pt
\left(
\begin{array}{@{\,}cc|c@{\,}}
*&*&\\
c_{i_4}&*&\\
\hline 
&& *
\end{array}
\right)
\end{equation}
where $\{ i_1,\ldots,i_4\}=\{1,\ldots,4\}$ and $c_{i_4}\neq 0$.
However, by the condition $c_{i_4}\neq 0$, the matrices do not satisfy the equation (\ref{Tr condition of n=5}).

Second, we consider the case where $m_1=2,m_2=0$.
We put
\begin{equation}\label{m_1=2 m_2=0}
M_{i_1}= 
\arraycolsep5pt
\left(
\begin{array}{@{\,}cc|c@{\,}}
0&1&\\
0&0&\\
\hline 
&& 0
\end{array}
\right),
M_{i_2}=
\arraycolsep5pt
\left(
\begin{array}{@{\,}cc|c@{\,}}
0&1&\\
0&0&\\
\hline 
&& 0
\end{array}
\right),
M_{i_3}=
\arraycolsep5pt
\left(
\begin{array}{@{\,}cc|c@{\,}}
*&*&\\
c_{i_3}&*&\\
\hline 
&& *
\end{array}
\right),
M_{i_4}=
\arraycolsep5pt
\left(
\begin{array}{@{\,}cc|c@{\,}}
*&*&\\
c_{i_4}&*&\\
\hline 
&& *
\end{array}
\right)
\end{equation}
where $\{ i_1,\ldots,i_4\}=\{1,\ldots,4\}$, $c_{i_3}\neq0$, and $c_{i_3}\neq 0$.
If $(i_1,i_2)=(1,3)$ or $(2,4)$, then the matrices do not satisfy the equation (\ref{Tr condition of n=5}).
Therefore, we consider the case where $(i_1,i_2)=(1,2)$, $(2,3)$, or $(3,4)$.
The 1-parameter subgroup (\ref{1-PS def}) acts on the matrices (\ref{m_1=2 m_2=0}).
For the matrices $M_{i_1}$ and $M_{i_2}$, the action is trivial.
The actions of the 1-parameter subgroup $\lambda_r(t)$ on $M_{i_3}$ and $M_{i_4}$ are
\begin{equation}
\begin{aligned}
\lambda_r(t) \cdot M_{i_3}&=
\arraycolsep5pt
\left(
\begin{array}{@{\,}cc|c@{\,}}
*&t^{2r} *&\\
t^{-2r}c_{i_3}&*&\\
\hline 
&& *
\end{array}
\right)
 &\lambda_r(t) \cdot M_{i_4}&=
\arraycolsep5pt
\left(
\begin{array}{@{\,}cc|c@{\,}}
*&t^{2r} *&\\
t^{-2r}c_{i_4}&*&\\
\hline 
&& *
\end{array}
\right)  \\
&=
\arraycolsep5pt
\left(
\begin{array}{@{\,}cc|c@{\,}}
t^{2r}*&t^{4r} *&\\
c_{i_3}&t^{2r}*&\\
\hline 
&& t^{2r}*
\end{array}
\right),
&
&=
\arraycolsep5pt
\left(
\begin{array}{@{\,}cc|c@{\,}}
t^{2r}*& t^{4r}*&\\
c_{i_4}&t^{2r}*&\\
\hline 
&& t^{2r}*
\end{array}
\right).
\end{aligned}
\end{equation}
Then, the limit $\lim_{t\rightarrow 0}\lambda_r \cdot M$ is the matrix (\ref{properly semistable1}) or (\ref{properly semistable2}).

Since the orbits of the points $s_1$ and $s_2$ are closed, the orbits have the maximum dimension of the stabilizer group, which is one dimension.
\end{proof}

We consider a resolution of properly semistable points. 
We take the blowing up along the orbits of $s_1$ and $s_2$:
\begin{equation}\label{first blowing up}
\widetilde{\overline{\mathrm{Rep}_{5,\boldsymbol{k}}}} \longrightarrow \overline{\mathrm{Rep}_{5,\boldsymbol{k}}}.
\end{equation}
The simultaneous action of $SL_2(\mathbb{C})$ on $\overline{\mathrm{Rep}_{5,\boldsymbol{k}}}$ 
induces an action on $\widetilde{\overline{\mathrm{Rep}_{5,\boldsymbol{k}}}}$.
By taking the blowing up (\ref{first blowing up}), the condition for stability and unstability is unchanging. 
On the other hand, the points of the exceptional divisors are stable points.
The points of orbits which are not closed are unstable points. 
Hence, there is no properly semistable point in $\widetilde{\overline{\mathrm{Rep}_{5,\boldsymbol{k}}}}$.  
(See \cite[Section 6]{Kir}).
We will show that the quotient of the blowing up is non-singular.
First, we describe the blowing up of $\overline{\mathrm{Rep}_{5,\boldsymbol{k}}}$ along the orbit of $s_1$.
Let $U_1$ and $U_2$ be the Zariski open sets $U_1=[b_1\neq0,b_2\neq0,c_3\neq0,c_4\neq0]$ and $U_2=[c_1\neq0,c_2\neq0,b_3\neq0,b_4\neq0]$ 
of $\overline{\mathrm{Rep}_{5,\boldsymbol{k}}} \subset \overline{\mathcal{C}}_1 \times \cdots \times \overline{\mathcal{C}}_4$.
Note that the orbit of $s_1$ is contained in $U_1\cup U_2$.
Since $\overline{\mathcal{C}}_i \cong \mathbb{P}^1 \times \mathbb{P}^1$ for $i=1,\ldots,4$ by the transformation (\ref{isomorphism to P1P1}),
we have 
\begin{equation}
U_i \subset \overline{\mathrm{Rep}_{5,\boldsymbol{k}}} \subset (\mathbb{P}^1 \times \mathbb{P}^1)^4 \mbox{ for $i=1,2$}.
\end{equation}
In the open sets $U_1$ and $U_2$, we put the following affine coordinates 
\begin{equation*}
([1:x_1],[y_1:1]),([1:x_2],[y_2:1]),([x_3:1],[1:y_3]),([x_4:1],[1:y_4]),
\end{equation*} 
and
\begin{equation*}
([z_1:1],[1:w_1]),([z_2:1],[1:w_2]),([1:z_3],[w_3:1]),([1:z_4],[w_4:1]),
\end{equation*} 
respectively.
In the open set $U_1$, the ideal of the orbit of $s_1$ is $(X_1,X_2,X_3,X_4,X_5)$ where 
\begin{align*}
X_0&:=e_1=\frac{ y_1+x_1}{\alpha_1^{+}-\alpha_1^{-}}, & X_1&:=e_2=\frac{ y_2+x_2}{\alpha_2^{+}-\alpha_2^{-}},\\
X_2&:=e_3=\frac{ y_3+x_3}{\alpha_3^{+}-\alpha_3^{-}}, & X_3&:=e_4=\frac{ y_4+x_4}{\alpha_4^{+}-\alpha_4^{-}},\\
X_4&:=x_1-x_2, & X_5&:=x_3-x_4.
\end{align*}
We can extend the torus action on $\overline{\mathrm{Rep}_{5,\boldsymbol{k}}}$ to the torus action on $\widetilde{\overline{\mathrm{Rep}_{5,\boldsymbol{k}}}}$ by
\begin{equation*}
\begin{pmatrix}
a&0\\
0&a^{-1}
\end{pmatrix}
\curvearrowright [X_0:X_1:X_2:X_3:X_4:X_5] \longmapsto [a^{-2}X_0:a^{-2}X_1:a^2X_2:a^2X_3:a^{-2}X_4:a^2X_5].
\end{equation*}
On the other hand, in the open set $U_2$, the ideal of the orbit of $s_1$ is $(Y_1,Y_2,Y_3,Y_4,Y_5)$ where 
\begin{align*}
Y_0&:=e_1=\frac{ z_1+w_1}{\alpha_1^{+}-\alpha_1^{-}}, & Y_1&:=e_2=\frac{ z_2+w_2}{\alpha_2^{+}-\alpha_2^{-}},\\
Y_2&:=e_3=\frac{ z_3+w_3}{\alpha_3^{+}-\alpha_3^{-}}, & Y_3&:=e_4=\frac{ z_4+w_4}{\alpha_4^{+}-\alpha_4^{-}},\\
Y_4&:=z_1-z_2, & Y_5&:=z_3-z_4.
\end{align*}
We can extend the torus action on $\overline{\mathrm{Rep}_{5,\boldsymbol{k}}}$ to the torus action on $\widetilde{\overline{\mathrm{Rep}_{5,\boldsymbol{k}}}}$ by
\begin{equation*}
\begin{pmatrix}
a&0\\
0&a^{-1}
\end{pmatrix}
\curvearrowright [Y_0:Y_1:Y_2:Y_3:Y_4:Y_5] \longmapsto [a^{2}Y_0:a^{2}Y_1:a^{-2}Y_2:a^{-2}Y_3:a^{2}Y_4:a^{-2}Y_5].
\end{equation*}
Hence, we have
\begin{equation*}
\widetilde{\overline{\mathrm{Rep}_{5,\boldsymbol{k}}}}_{s_1} 
\hookrightarrow (\overline{\mathrm{Rep}_{5,\boldsymbol{k}}} \setminus U_1\cup U_2) \cup (U_1 \times \mathbb{P}^5) \cup (U_2 \times \mathbb{P}^5)
\end{equation*}
where $\widetilde{\overline{\mathrm{Rep}_{5,\boldsymbol{k}}}}_{s_1}$ is the blowing up along the orbit of $s_1$.
The stabilizer group of any point in the exceptional divisor is
\begin{equation*}
\left\{
\begin{pmatrix}
1&0\\
0&1
\end{pmatrix}
,\begin{pmatrix}
-1&0\\
0&-1
\end{pmatrix}
\right\}.
\end{equation*}
This action is trivial. 
In the same way, we can describe the blowing up along the orbit of $s_2$.
 

\begin{thm}\label{thm}
\textit{
In the case of $n=5$, there exists a non-singular compactification of $\mathcal{R}_{5,\boldsymbol{k}}$ such that the boundary complex is a simplicial decomposition of sphere $S^3$.
}
\end{thm}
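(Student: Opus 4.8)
The plan is to take the GIT quotient of the resolved representation variety constructed above and read off its boundary directly. Concretely, set
\begin{equation*}
\widetilde{\overline{\mathcal{R}_{5,\boldsymbol{k}}}} := \mathrm{Proj}\, H^0\bigl(\widetilde{\overline{\mathrm{Rep}_{5,\boldsymbol{k}}}}, \widetilde{L}^{\otimes r}\bigr)^{Ad(SL_2(\mathbb{C}))},
\end{equation*}
where $\widetilde{L}$ is the linearization on the blow-up (\ref{first blowing up}) obtained from $L$ by twisting down along the exceptional divisors (Kirwan's partial desingularization, \cite[Section 6]{Kir}) so that the orbits of $s_1$ and $s_2$ are removed from the semistable locus. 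By the discussion following (\ref{first blowing up}), the blow-up has no strictly semistable points, so on the stable locus the quotient map is a geometric quotient and $\widetilde{\overline{\mathcal{R}_{5,\boldsymbol{k}}}}$ is a projective compactification of $\mathcal{R}_{5,\boldsymbol{k}}$.

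Next I would prove non-singularity in two parts. Away from the images of the exceptional divisors the argument is the same as in the case $n=4$: the relevant locus of $\widetilde{\overline{\mathrm{Rep}_{5,\boldsymbol{k}}}}$ is smooth with finite stabilizers, so Luna's slice theorem yields a smooth quotient, and along each finite boundary stratum $[e_i=0]$ one normalizes $M_i$ as in (\ref{nilpotent matrix}), passes to the $\mathbb{P}^1\times\mathbb{P}^1$ models (\ref{isomorphism to P1P1}), and quotients by the residual torus exactly as before. Near the exceptional divisors I would use the explicit coordinates $[X_0:\cdots:X_5]$ and $[Y_0:\cdots:Y_5]$ together with the torus weights $(-2,-2,2,2,-2,2)$ and $(2,2,-2,-2,2,-2)$ displayed above: the induced $\mathbb{C}^*$-action on each exceptional $\mathbb{P}^5$ has balanced weights (three of each sign), so its GIT quotient is a smooth projective toric fourfold, and one checks smoothness transversally to the exceptional divisor. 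This reduces the whole non-singularity statement to a local toric computation.

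It remains to describe the boundary divisor and its combinatorics. The boundary is the union of the four divisors $E_i$ (images of $[e_i=0]$, $i=1,\dots,4$) together with the divisors coming from the two exceptional loci over $s_1$ and $s_2$. Following the $n=4$ computation, each $E_i$ is cut out, after normalizing the nilpotent $M_i$, by the single trace relation (\ref{Tr condition of n=5}) on the product of the $\mathbb{P}^1\times\mathbb{P}^1$ factors, and I would identify each $E_i$ as an explicit rational threefold. The structural point is that the strictly semistable locus, where two of the nilpotent matrices coincide (the case $m_1=2,\,m_2=0$ of Proposition \ref{stability condition}), is precisely what the blow-up separates; this is what turns the intersection poset from that of a ball into that of a sphere, the exceptional divisors supplying the extra faces. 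I would then compute all multi-intersections $\bigcap_{i\in I}E_i$ and their intersections with the exceptional divisors, and match the resulting nerve with a simplicial decomposition of $S^3$ (realizing it either as $\partial\Delta^4$ or as a join of two triangles $S^1 * S^1\cong S^3$, according to the final count of components).

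The main obstacle is exactly this last step. One must verify that after the blow-up the union of boundary components is a normal crossings divisor --- in particular that the $E_i$ meet the exceptional divisors transversally and that no unexpected higher intersections survive --- and then read off the intersection poset precisely. Both the normal-crossing check at the exceptional divisors and the careful bookkeeping of which strata remain semistable under the constraint $4\ge 2m_1+m_2$ of Proposition \ref{stability condition} are where the real work lies; by contrast, the construction of the quotient and the verification of smoothness are routine given the explicit coordinates already in place.
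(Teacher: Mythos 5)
Your construction of the compactification (GIT quotient of the Kirwan blow-up along the orbits of $s_1$ and $s_2$) and your treatment of smoothness follow the paper's route, but your outline of the combinatorial step has a genuine gap: you assume that the boundary of $\widetilde{\overline{\mathcal{R}_{5,\boldsymbol{k}}}}$ already consists of six divisors whose nerve can be matched with a triangulation of $S^3$. In fact the paper shows that, while the intersections $E_i\cap E_{i+1}$ are irreducible, the intersections $E_1\cap E_3$ and $E_2\cap E_4$ are each \emph{reducible}, splitting into two components $E_{1,3}^{\pm}$ and $E_{2,4}^{\pm}$ (this comes from the factorization $(ta_4+sc_4)(c_2s-k_2e_2t)=0$ in (\ref{not irr eqs})). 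A double intersection with two irreducible components contributes two edges with the same pair of endpoints to the boundary complex, so the six-component configuration is not a simplicial complex and cannot be a simplicial decomposition of $S^3$. The paper therefore performs a \emph{second} blow-up, along $E_{1,3}^{+},E_{1,3}^{-},E_{2,4}^{+},E_{2,4}^{-}$, producing four further boundary divisors $\textit{ex}_{1,3}^{\pm},\textit{ex}_{2,4}^{\pm}$ and a total of ten components. This is not bookkeeping; it is the essential move that your proposal omits.

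Relatedly, neither of your two candidate models for the final complex ($\partial\Delta^4$, with five vertices, or a join of two triangles, with six) can occur: the correct complex has ten vertices. The eight divisors $E_1,\dots,E_4,\textit{ex}_{1,3}^{\pm},\textit{ex}_{2,4}^{\pm}$ form a triangulated $S^2$, and $\textit{ex}_1,\textit{ex}_2$ are joined to every vertex of that $S^2$ but not to each other (the orbits of $s_1$ and $s_2$ are disjoint), so the boundary complex is the suspension $S^0 * S^2\cong S^3$. To repair your argument you would need to (i) detect the reducibility of $E_1\cap E_3$ and $E_2\cap E_4$ from the explicit equations, (ii) insert the second blow-up, and (iii) verify nonemptiness and irreducibility of all double, triple and quadruple intersections among the resulting ten divisors (including which pairs, such as $\textit{ex}_{1,3}^{+}\cap E_4$, are empty); this is where the paper spends most of its effort.
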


\begin{proof}
The outline of the proof is as follows.
We put 
\begin{equation*}
\widetilde{\overline{\mathcal{R}_{5,\boldsymbol{k}}}}:=  \widetilde{\overline{\mathrm{Rep}_{5,\boldsymbol{k}}}}/\!/SL_2(\mathbb{C}).
\end{equation*}
We have the six components of the boundary divisor of $\widetilde{\overline{\mathcal{R}_{5,\boldsymbol{k}}}}$:
the quotients of the proper transformations of the divisors $[e_1=0], [e_2=0], [e_3=0], [e_4=0]$ of $\overline{\mathrm{Rep}_{5,\boldsymbol{k}}}$ 
and the quotients of the exceptional divisors associated with blowing up along $s_1$ and $s_2$.
We denote by $E_1,E_2,E_3,E_4$ and $\textit{ex}_1,\textit{ex}_2$ each component. 
In Step 1, we describe the components $E_1,E_2,E_3$ and $E_4$ explicitly.
In Step 2, we describe the intersections $E_i\cap E_{j},\ i\neq j$.
In particular, the intersections $E_i\cap E_{i+1}, i=1,2,3,4$ (where $E_5$ implies $E_1$) are nonempty and irreducible.
On the other hand, the intersections $E_i\cap E_{i+2}, i=1,2$ are not irreducible.
The intersection $E_i\cap E_{i+2}$ consists of two components, denoted by $E_{i,i+2}^{+},E_{i,i+2}^{-}$.
Then, we take the blowing up along the components $E_{1,3}^{+},E_{1,3}^{-},E_{2,4}^{+},E_{2,4}^{-}$:
\begin{equation}\label{second blowing up}
\widetilde{X} \longrightarrow X:= \widetilde{\overline{\mathcal{R}_{5,\boldsymbol{k}}}}.
\end{equation}
We use the same notation $E_i$ which is the proper transform of $E_i$.
We denote by $\textit{ex}_{1,3}^{+},\textit{ex}_{1,3}^{-},\textit{ex}_{2,4}^{+},\textit{ex}_{2,4}^{-}$ the exceptional divisors associated with the blowing up (\ref{second blowing up}).
Consequently, the components of the boundary divisor of the compactification $\widetilde{X}$ of $\mathcal{R}_{5,\boldsymbol{k}}$ are
\begin{equation*}
E_1,E_2,E_3,E_4, \textit{ex}_1,\textit{ex}_2,\textit{ex}_{1,3}^{+},\textit{ex}_{1,3}^{-},\textit{ex}_{2,4}^{+},\textit{ex}_{2,4}^{-}.
\end{equation*}
Next, we see how $ex_i$ and the other components intersect.
In Step 3, we describe the $2$-dimensional simplices and the $3$-dimensional simplices.
Finally, we can describe the boundary complex of the boundary divisor of the compactification of the character variety. 

\textbf{Step 1}.
We describe the component $E_i$ (i.e. $[e_i=0]/\!/SL_2(\mathbb{C})$) explicitly.
We consider the case where $e_1=0$.
Let $D_i$ be the divisor $[e_i=0]$ on $\overline{\mathrm{Rep}_{5,\boldsymbol{k}}}$ for $i=1,\ldots, 4$.
Let $(M_1,\ldots,M_4)$ be a point on $D_1$.
We normalize the matrix $M_1$ by the $SL_2(\mathbb{C})$-conjugate action as the matrix (\ref{nilpotent matrix}).
The stabilizer subgroup of the matrix is the group of upper triangular matrices.
From the stability, we obtain $c_{2}\neq 0$, $ c_{3}\neq 0$ or $c_4\neq 0$. 
In the case of $c_2\neq 0$,
the matrices of the divisor $D_1$ are normalized by the action of this stabilizer subgroup:
\begin{equation}
\left(
\arraycolsep5pt
\left(
\begin{array}{@{\,}cc|c@{\,}}
0&1&\\
0&0&\\
\hline 
&& 0
\end{array}
\right),
\arraycolsep5pt
\left(
\begin{array}{@{\,}cc|c@{\,}}
0&-e_2^2&\\
c_{2}^2& k_2 c_{2} e_2 \\
\hline 
&& c_{2} e_2 
\end{array}
\right),
\arraycolsep5pt
\left(
\begin{array}{@{\,}cc|c@{\,}}
a_{3}&b_{3}&\\
c_{3}&d_{3}&\\
\hline 
&& e_3 
\end{array}
\right),
\arraycolsep5pt
\left(
\begin{array}{@{\,}cc|c@{\,}}
a_{4}&b_{4}&\\
c_{4}&d_{4}&\\
\hline 
&& e_4 
\end{array}
\right)
\right).
\end{equation}
Then, we have the locus defined by the following equations
\begin{equation}\label{n=5 D1}
\left\{
\begin{aligned}
&a_3+d_{3}=k_3 e_3 \\
&a_{3}d_{3}-b_{3}c_{3}=e_3^2\\
&a_4+d_{4}=k_4 e_4 \\
&a_{4}d_{4}-b_{4}c_{4}=e_4^2\\
&c_2a_3a_4+k_2e_2c_3a_4+c_2b_3c_4+k_2e_2d_3c_4=0
\end{aligned}
\right.
\end{equation}
in $(\mathbb{P}^1 \times (\mathbb{P}^4 \times \mathbb{P}^4)) \cap [c_2\neq 0]$.
The locus defined by $a_i+d_{i}=k_i e_i$ and $a_{i}d_{i}-b_{i}c_{i}=e_i^2$ in $\mathbb{P}^4$ is isomorphic to $\mathbb{P}^1\times \mathbb{P}^1$.
We put the coordinates $S_3,T_3,U_3,V_3$ and $S_4,T_4,U_4,V_4$ of $(\mathbb{P}^1\times \mathbb{P}^1)^2$ in the same way as in Section \ref{section n=4}.
Then, the locus of the normalized matrices is defined by the following equation
\begin{align*}
&c_2(\alpha_3^{-} S_3U_3+\alpha_3^{+} T_3V_3)(\alpha_4^{-} S_4U_4+\alpha_4^{+} T_4V_4)\\
&\quad +k_2e_2(\alpha_3^{+}-\alpha_3^{-})(T_3U_3)(\alpha_4^{-} S_4U_4+\alpha_4^{+} T_4V_4)\\
&\qquad +c_2(\alpha_3^{+}-\alpha_3^{-})(\alpha_4^{+}-\alpha_4^{-})(S_3V_3)(T_4U_4)\\
&\qquad \quad + k_2e_2(\alpha_4^{+}-\alpha_4^{-})(\alpha_3^{+} S_3U_3+\alpha_3^{-} T_3V_3)(T_4U_4)=0
\end{align*}
in $ ( \mathbb{P}^1)^5  \cap [c_2\neq 0]$.
Let $D_1^{c_2\neq 0}$ be the Zariski open set of the hypersrface in $ (\mathbb{P}^1 )^5$.
The torus action on $D_1^{c_2\neq 0}$ is the following action:
\begin{align*}
\begin{pmatrix}
a&0\\
0&a^{-1}
\end{pmatrix}
&\curvearrowright
([c_2:e_2],[S_3:T_3],[U_3:V_3],[S_4:T_4],[U_4:V_4])\\
& \mapsto ([a^{-1}c_2:ae_2],[aS_3:a^{-1}T_3],[a^{-1}U_3:aV_3],[aS_3:a^{-1}T_3],[a^{-1}U_3:aV_3]).
\end{align*}

In the same way as in the case $c_2\neq0$, we have the Zariski open sets of the hypersurfaces in $(\mathbb{P}^1)^5$ corresponding to $c_3\neq 0$ and $c_4\neq 0$, 
denoted by $D_1^{c_3\neq 0}$ and $D_1^{c_4\neq 0}$.
We glue $D_1^{c_2\neq 0}$, $D_1^{c_3\neq 0}$ and $D_1^{c_4\neq 0}$, denoted by $D_1'$.
We take the blowing up (\ref{first blowing up}).
Let $\widetilde{D}_1'$ be the proper transform of $D_1'$.
Then, the component of the boundary divisor $E_1$ is the quotient of $\widetilde{D}_1'$ by the torus action.
Similarly, we may describe the components $E_j\ (j=2,3,4)$.

\textbf{Step 2}.
We denote by $D_{i,j}$ the intersection of the divisors $[e_i=0]$ and $[e_j=0]$ on $\overline{\mathrm{Rep}_{5,\boldsymbol{k}}}$.
First, we consider the intersection of $E_1$ and $E_2$. 
We substitute $e_2=0$ for (\ref{n=5 D1}).
Then, we have the locus defined by the following equations 
\begin{equation*}
\left\{
\begin{aligned}
&a_3+d_{3}=k_3 e_3 \\
&a_{3}d_{3}-b_{3}c_{3}=e_3^2\\
&a_4+d_{4}=k_4 e_4 \\
&a_{4}d_{4}-b_{4}c_{4}=e_4^2\\
&a_3a_4+b_3c_4=0
\end{aligned}
\right.
\end{equation*}
in $(\mathbb{P}^1 \times (\mathbb{P}^4)^2 )\cap [c_2\neq0]$.
By the transform (\ref{isomorphism to P1P1}), we have the Zariski open set of the hypersurface in $(\mathbb{P}^1 )^5$, denoted by $D_{12}^{c_2\neq 0}$.
Next, we consider the case where $c_3\neq 0$.
In the same way as in the case where $c_2\neq0$, we have the locus defined by the following equations
\begin{equation*}
\left\{
\begin{aligned}
&a_2+d_{2}=0 \\
&a_{2}d_{2}-b_{2}c_{2}=0\\
&a_4+d_{4}=k_4 e_4 \\
&a_{4}d_{4}-b_{4}c_{4}=e_4^2\\
&d_2c_3^2a_4-c_2e_3^2c_4+k_3d_2c_3e_3c_4=0
\end{aligned}
\right.
\end{equation*}
in $(\mathbb{P}^1 \times (\mathbb{P}^4)^2 )\cap [c_3\neq0]$.
Since we may put $\begin{pmatrix}
a&b\\
c&d
\end{pmatrix}=\begin{pmatrix}
st&s^2\\
-t^2&-st
\end{pmatrix}$ where $a+d=0,ad-be=0$, we have
\begin{equation*}
\left\{
\begin{aligned}
&a_4+d_{4}=k_4 e_4 \\
&a_{4}d_{4}-b_{4}c_{4}=e_4^2\\
&t(sc_3^2a_4-te_3^2c_4+k_3sc_3e_3c_4)=0.
\end{aligned}
\right.
\end{equation*}
By the transform (\ref{isomorphism to P1P1}), we have the Zariski open set of the hypersurface in $(\mathbb{P}^1 )^5$, denoted by $D_{1,2}^{c_3\neq 0}$.
The locus $D_{1,2}^{c_3\neq 0}$ is not irreducible.
Now, we take the blowing up along the orbits of $s_1$ and $s_2$.
Let $\widetilde{D}_{1,2}^{c_3\neq 0}$ be the proper transform of $D_{1,2}^{c_3\neq 0}$.
Since an orbit of a point of
\begin{equation}\label{t=0 minus intersection}
[t=0] \setminus ([t=0] \cap[sc_3^2a_4-te_3^2c_4+k_3sc_3e_3c_4]) \subset D_{1,2}^{c_3\neq 0}
\end{equation}
are not closed, the points of the inverse image of (\ref{t=0 minus intersection}) on $\widetilde{D}_{1,2}^{c_3\neq 0}$ are unstable (see \cite[Lemma 6.6]{Kir}).
Then, the quotient of $\widetilde{D}_{1,2}^{c_3\neq 0}$ by the torus action is irreducible.
Next, we consider the case where $c_4\neq 0$.
In the same way as in the case where $c_3\neq0$, we have the Zariski open set of the hypersurface in $(\mathbb{P}^1 )^5$, denoted by $D_{1,2}^{c_4\neq 0}$.
We glue $D_{1,2}^{c_2\neq 0}$, $D_{1,2}^{c_3\neq 0}$ and $D_{1,2}^{c_4\neq 0}$, denoted by $D_{1,2}'$.
We take the proper transform of $D_{1,2}'$ of the blowing up along the orbits of $s_1$ and $s_2$, denoted by $\widetilde{D}_{1,2}'$.
Then, the intersection of $E_1$ and $E_2$ is the quotient of $\widetilde{D}_{1,2}'$ by the torus action, denoted by $E_{1,2}$.
The intersection $E_{1,2}$ is irreducible.


Second, we consider the intersection of $E_1$ and $E_3$. 
We substitute $e_3=0$ for (\ref{n=5 D1}).
Then, we have the locus defined by the following equations 
\begin{equation*}
\left\{
\begin{aligned}
&a_3+d_{3}=0 \\
&a_{3}d_{3}-b_{3}c_{3}=0\\
&a_4+d_{4}=k_4 e_4 \\
&a_{4}d_{4}-b_{4}c_{4}=e_4^2\\
&c_2a_3a_4+k_2e_2c_3a_4+c_2b_3c_4+k_2e_2d_3c_4=0
\end{aligned}
\right.
\end{equation*}
in $(\mathbb{P}^1 \times (\mathbb{P}^4)^2)\cap [c_2\neq0]$.
We put $a_3=st, b_3=s^2, c_3=-t^2, d_3=-st$. 
Then, we have the equations
\begin{equation}\label{not irr eqs}
\left\{
\begin{aligned}
&a_4+d_{4}=k_4 e_4 \\
&a_{4}d_{4}-b_{4}c_{4}=e_4^2\\
&(ta_4+sc_4)(c_2s-k_2e_2t)=0.
\end{aligned}
\right.
\end{equation}
We denote the two components $[ta_4+sc_4=0]$ and $[c_2s-k_2e_2t=0]$ by $D_{1,3}^{c_2\neq0,+}$ and $D_{1,3}^{c_2\neq0,-}$.

\begin{atte}
Any point $(M_1,M_2,M_3,M_4)$ on $D_{1,3}^{c_2\neq0,+}$ is conjugate to the following matrices
\begin{equation}\label{D13+}
\left(
\arraycolsep5pt
\left(
\begin{array}{@{\,}cc|c@{\,}}
0&1&\\
0&0&\\
\hline 
&& 0
\end{array}
\right),
\arraycolsep5pt
\left(
\begin{array}{@{\,}cc|c@{\,}}
a_2&b_2\\
c_2&d_2  \\
\hline 
&& e_2
\end{array}
\right),
\arraycolsep5pt
\left(
\begin{array}{@{\,}cc|c@{\,}}
0&0&\\
1&0&\\
\hline 
&& 0 
\end{array}
\right),
\arraycolsep5pt
\left(
\begin{array}{@{\,}cc|c@{\,}}
0&b_4&\\
c_4&d_4&\\
\hline 
&&e_4  
\end{array}
\right)
\right).
\end{equation}
In fact, 
we normalize the third matrix $M_3$ instead of $M_2$. 
Then, we have
\begin{equation*}
M_3=\arraycolsep5pt
\left(
\begin{array}{@{\,}cc|c@{\,}}
0&1&\\
0&0&\\
\hline 
&& 0 
\end{array}
\right) \mbox{ or }
\arraycolsep5pt
\left(
\begin{array}{@{\,}cc|c@{\,}}
0&0&\\
1&0&\\
\hline 
&& 0 
\end{array}
\right).
\end{equation*}
In the former case, by the stability, we have $c_4\neq 0$.
However, the matrices do not satisfy the condition (\ref{Tr condition of n=5}).
In the latter case, the equation $ta_4+sc_4=0$ implies that $a_4=0$.
On the other hand, any point on $D_{1,3}^{c_2\neq0,-}$ is conjugate to the following matrices
\begin{equation}\label{D13-}
\left(
\arraycolsep5pt
\left(
\begin{array}{@{\,}cc|c@{\,}}
0&1&\\
0&0&\\
\hline 
&& 0
\end{array}
\right),
\arraycolsep5pt
\left(
\begin{array}{@{\,}cc|c@{\,}}
a_2&b_2\\
c_2&0  \\
\hline 
&& e_2
\end{array}
\right),
\arraycolsep5pt
\left(
\begin{array}{@{\,}cc|c@{\,}}
0&0&\\
1&0&\\
\hline 
&& 0 
\end{array}
\right),
\arraycolsep5pt
\left(
\begin{array}{@{\,}cc|c@{\,}}
a_4&b_4&\\
c_4&d_4&\\
\hline 
&&e_4  
\end{array}
\right)
\right).
\end{equation}
\end{atte}

We consider the cases where $c_3\neq 0$ and $c_4\neq 0$.
In the same way as in the case where $c_2\neq 0$, we have the Zariski open sets
\begin{equation*}
D_{1,3}^{c_3\neq0,+},  D_{1,3}^{c_3\neq0,-}, D_{1,3}^{c_4\neq0,+},  D_{1,3}^{c_4\neq0,-}
\end{equation*}
of the hypersurfaces in $(\mathbb{P}^1 )^5$.
We glue $D_{1,3}^{c_2\neq 0,+}$, $D_{1,3}^{c_3\neq 0,+}$ and $D_{1,3}^{c_4\neq 0,+}$ 
(resp. $D_{1,3}^{c_2\neq 0,-}$, $D_{1,3}^{c_3\neq 0,-}$ and $D_{1,3}^{c_4\neq 0,-}$), denoted by ${}'D_{1,3}^+$ (resp. ${}'D_{1,3}^-$).
We take the blowing up (\ref{first blowing up}).
Let ${}'\widetilde{D}_{1,3}^+$ and ${}'\widetilde{D}_{1,3}^-$ be the proper transforms of ${}'D_{1,3}^+$ and ${}'D_{1,3}^-$, respectively.
Then, the intersections of $E_1$ and $E_3$ are the quotients of ${}'\widetilde{D}_{1,3}^+$ and ${}'\widetilde{D}_{1,3}^-$ by the torus action, denoted by $E_{1,3}^+$ and $E_{1,3}^-$.

We consider the intersections $E_2\cap E_3$, $E_3\cap E_4$ and $E_1\cap E_4$.
In the same way as in the case $E_1 \cap E_2$, the intersections are irreducible, denoted by $E_{2,3}$, $E_{3,4}$ and $E_{1,4}$.

We consider the intersection of $E_2$ and $E_4$.
In the same way as in the case $E_1 \cap E_3$, the intersection $E_2\cap E_4$ is not irreducible.
The intersection has two components, denoted by $E_{2,4}^+$ and $E_{2,4}^-$.
Here, the components $E_{2,4}^+$ and $E_{2,4}^-$ correspond respectively to the following matrices
\begin{equation*}
\left(
\arraycolsep5pt
\left(
\begin{array}{@{\,}cc|c@{\,}}
a_1&b_1\\
c_1&d_1  \\
\hline 
&& e_1
\end{array}
\right),
\arraycolsep5pt
\left(
\begin{array}{@{\,}cc|c@{\,}}
0&1&\\
0&0&\\
\hline 
&& 0 
\end{array}
\right),
\arraycolsep5pt
\left(
\begin{array}{@{\,}cc|c@{\,}}
a_3&b_3&\\
c_3&0&\\
\hline 
&&e_3  
\end{array}
\right),
\arraycolsep5pt
\left(
\begin{array}{@{\,}cc|c@{\,}}
0&0&\\
1&0&\\
\hline 
&& 0
\end{array}
\right)
\right)
\end{equation*}
and
\begin{equation*}
\left(
\arraycolsep5pt
\left(
\begin{array}{@{\,}cc|c@{\,}}
0&b_1\\
c_1&d_1  \\
\hline 
&& e_1
\end{array}
\right),
\arraycolsep5pt
\left(
\begin{array}{@{\,}cc|c@{\,}}
0&1&\\
0&0&\\
\hline 
&& 0 
\end{array}
\right),
\arraycolsep5pt
\left(
\begin{array}{@{\,}cc|c@{\,}}
a_3&b_3&\\
c_3&d_3&\\
\hline 
&&e_3 
\end{array}
\right),
\arraycolsep5pt
\left(
\begin{array}{@{\,}cc|c@{\,}}
0&0&\\
1&0&\\
\hline 
&& 0
\end{array}
\right)
\right).
\end{equation*}

Now, we take the blowing up along the components $E_{1,3}^{+},E_{1,3}^{-},E_{2,4}^{+},E_{2,4}^{-}$:
\begin{equation*}
\widetilde{X} \longrightarrow X:= \widetilde{\overline{\mathcal{R}_{5,\boldsymbol{k}}}}.
\end{equation*}
We use the same notation $E_i$ which is the proper transforms of $E_i$.
We denote by $\textit{ex}_{1,3}^{+},\textit{ex}_{1,3}^{-},\textit{ex}_{2,4}^{+},\textit{ex}_{2,4}^{-}$ the quotients of the exceptional divisors associated with this blowing up.
Consequently, we have the ten components of the boundary divisor of the compactification $\widetilde{X}$ of $\mathcal{R}_{5,\boldsymbol{k}}$
\begin{equation*}
E_1,E_2,E_3,E_4, \textit{ex}_1,\textit{ex}_2,\textit{ex}_{1,3}^{+},\textit{ex}_{1,3}^{-},\textit{ex}_{2,4}^{+},\textit{ex}_{2,4}^{-},
\end{equation*}
and we obtain that the intersections
\begin{equation*}
E_1\cap E_2,\quad E_2\cap E_3,\quad E_3\cap E_4,\quad E_4\cap E_1
\end{equation*}
and
\begin{equation*}
E_1\cap \textit{ex}_{1,3}^{\pm},\ E_3\cap \textit{ex}_{1,3}^{\pm},\ E_2\cap \textit{ex}_{2,4}^{\pm},\ E_4\cap \textit{ex}_{2,4}^{\pm}
\end{equation*}
are nonempty and irreducible.

We describe the intersections of the other pairs.
We consider the intersection of $\textit{ex}_{1,3}^{+}$ and $E_4$.
If we substitute $e_4=0$ for the matrix (\ref{D13+}),
then we have $d_4=0$.
Moreover, we have $b_4=0$ or $c_4=0$.
Then, we obtain that
\begin{equation*}
{}'D_{1,3}^+ \cap [e_4=0] = \{s_1,s_2\} \cup [\mbox{points whose orbits are not closed}].
\end{equation*}
By the blowing up along $s_1$ and $s_2$, we obtain that the intersection of $E_{1,3}^+$ and $E_4$ is empty (see \cite[Lemma 6.6]{Kir}).
Then, the intersection of $\textit{ex}_{1,3}^{+}$ and $E_4$ is empty.
In the same way as above, the intersections
\begin{equation*}
\textit{ex}_{1,3}^{-} \cap E_2 ,\quad \textit{ex}_{2,4}^{+} \cap E_3 ,\quad \textit{ex}_{2,4}^{-} \cap E_1 
\end{equation*}
are empty.
On the other hand, the intersections 
\begin{equation*}
\textit{ex}_{1,3}^{+} \cap E_2 ,\ \textit{ex}_{1,3}^{-} \cap E_4 ,\ \textit{ex}_{2,4}^{+} \cap E_1 ,\ \textit{ex}_{2,4}^{-} 
\cap E_3,\ \textit{ex}_{1,3}^{+} \cap  \textit{ex}_{1,3}^{-} ,\ \textit{ex}_{2,4}^{+} \cap  \textit{ex}_{2,4}^{-} 
\end{equation*}
are nonempty and irreducible. 
Next, we consider the intersections of the pairs containing $\textit{ex}_1$ or $\textit{ex}_2$.
The orbit of the point $s_1$ (resp. $s_2$) is contained in the components $D_1,\ldots,D_4$ and $D_{1,3}^{\pm},D_{2,4}^{\pm}$, respectively.
Here, $D_{1,3}^{\pm}$ and $D_{2,4}^{\pm}$ are the irreducible components of $D_{1,3}$ and $D_{2,4}$.
Then, the intersections $\textit{ex}_i \cap E_j$ and $\textit{ex}_i \cap \textit{ex}_{k,k+2}^{\pm}$ are nonempty and irreducible for $i=1,2$, $j=1,\ldots,4$ and $k=1,2$.
On the other hand, the orbits of the point $s_1$ and $s_2$ are not intersect.
Then, the intersection of $\textit{ex}_1$ and $\textit{ex}_2$ is empty.

\textbf{Step 3}.
We draw the vertexes and the $1$-dimensional simplices except $\textit{ex}_1$ and $\textit{ex}_2$.
Then, we obtain the following figure
\begin{figure}
\unitlength 0.1in
\begin{picture}( 18.3600, 15.8000)(  5.7000,-19.4000)
%
{\color[named]{Black}{%
\special{pn 8}%
\special{pa 1524 456}%
\special{pa 696 1184}%
\special{pa 1542 1908}%
\special{pa 2368 1178}%
\special{pa 1524 456}%
\special{pa 696 1184}%
\special{fp}%
}}%
%
{\color[named]{White}{%
\special{pn 0}%
\special{sh 0.300}%
\special{ia 1524 456 40 34  0.0000000  6.2831853}%
}}%
{\color[named]{Black}{%
\special{pn 8}%
\special{pn 8}%
\special{ar 1524 456 40 34  0.0000000  6.2831853}%
}}%
%
{\color[named]{White}{%
\special{pn 0}%
\special{sh 0.300}%
\special{ia 2368 1178 38 34  0.0000000  6.2831853}%
}}%
{\color[named]{Black}{%
\special{pn 8}%
\special{pn 8}%
\special{ar 2368 1178 38 34  0.0000000  6.2831853}%
}}%
%
{\color[named]{White}{%
\special{pn 0}%
\special{sh 0.300}%
\special{ia 700 1188 40 34  0.0000000  6.2831853}%
}}%
{\color[named]{Black}{%
\special{pn 8}%
\special{pn 8}%
\special{ar 700 1188 40 34  0.0000000  6.2831853}%
}}%
%
{\color[named]{White}{%
\special{pn 0}%
\special{sh 0.300}%
\special{ia 1542 1908 38 32  0.0000000  6.2831853}%
}}%
{\color[named]{Black}{%
\special{pn 8}%
\special{pn 8}%
\special{ar 1542 1908 38 32  0.0000000  6.2831853}%
}}%
%
{\color[named]{White}{%
\special{pn 0}%
\special{sh 0.300}%
\special{ia 1812 1190 38 34  0.0000000  6.2831853}%
}}%
{\color[named]{Black}{%
\special{pn 8}%
\special{pn 8}%
\special{ar 1812 1190 38 34  0.0000000  6.2831853}%
}}%
%
{\color[named]{White}{%
\special{pn 0}%
\special{sh 0.300}%
\special{ia 1528 946 38 34  0.0000000  6.2831853}%
}}%
{\color[named]{Black}{%
\special{pn 8}%
\special{pn 8}%
\special{ar 1528 946 38 34  0.0000000  6.2831853}%
}}%
%
{\color[named]{White}{%
\special{pn 0}%
\special{sh 0.300}%
\special{ia 1252 1188 38 34  0.0000000  6.2831853}%
}}%
{\color[named]{Black}{%
\special{pn 8}%
\special{pn 8}%
\special{ar 1252 1188 38 34  0.0000000  6.2831853}%
}}%
%
{\color[named]{White}{%
\special{pn 0}%
\special{sh 0.300}%
\special{ia 1532 1428 40 34  0.0000000  6.2831853}%
}}%
{\color[named]{Black}{%
\special{pn 8}%
\special{pn 8}%
\special{ar 1532 1428 40 34  0.0000000  6.2831853}%
}}%
%
{\color[named]{Black}{%
\special{pn 8}%
\special{pa 1524 456}%
\special{pa 1252 1188}%
\special{fp}%
\special{pa 1252 1188}%
\special{pa 1542 1908}%
\special{fp}%
\special{pa 1542 1908}%
\special{pa 1808 1186}%
\special{fp}%
\special{pa 1808 1186}%
\special{pa 1524 456}%
\special{fp}%
}}%
%
{\color[named]{Black}{%
\special{pn 8}%
\special{pa 696 1184}%
\special{pa 2368 1178}%
\special{fp}%
}}%
%
{\color[named]{Black}{%
\special{pn 8}%
\special{pa 1524 456}%
\special{pa 1542 1908}%
\special{dt 0.045}%
}}%
%
{\color[named]{Black}{%
\special{pn 8}%
\special{pa 696 1184}%
\special{pa 1532 1428}%
\special{dt 0.045}%
\special{pa 1536 1434}%
\special{pa 2368 1178}%
\special{dt 0.045}%
\special{pa 2368 1178}%
\special{pa 1528 946}%
\special{dt 0.045}%
\special{pa 1528 946}%
\special{pa 696 1184}%
\special{dt 0.045}%
\special{pa 696 1184}%
\special{pa 700 1188}%
\special{dt 0.045}%
}}%
\put(11.8000,-4.9000){\makebox(0,0)[lb]{$E_1$}}%
\put(5.7000,-14.2000){\makebox(0,0)[lb]{$E_2$}}%
\put(17.4000,-19.7000){\makebox(0,0)[lb]{$E_3$}}%
\put(23.8600,-10.7800){\makebox(0,0)[lb]{$E_4$}}%
\put(10.3000,-13.7000){\makebox(0,0)[lb]{$\textit{ex}_{1,3}^{+}$}}%
\put(18.0500,-11.2400){\makebox(0,0)[lb]{$\textit{ex}_{1,3}^{-}$}}%
\put(14.8000,-16.3000){\makebox(0,0)[lb]{$\textit{ex}_{2,4}^{-}$}}%
\put(14.7000,-8.8000){\makebox(0,0)[lb]{$\textit{ex}_{2,4}^{+}$}}%
\end{picture}%
\caption{}
\end{figure} 
We consider the following sphere
\begin{equation*}
\mathbb{R}^4 \supset S^3 = \{ (x,y,z,w) \in \mathbb{R}^4 \mid x^2+y^2+z^2+w^2=1 \}.
\end{equation*}
We arrange the vertexes except $\textit{ex}_1$ and $\textit{ex}_2$ on $S^2=S^3\cap [w=0]$ and
arrange the vertexes $\textit{ex}_1$ and $\textit{ex}_2$ at $(0,0,0,1)$ and $(0,0,0,-1)$ respectively.
We glue together the vertex $\textit{ex}_i\ (i=1,2)$ and each vertex on $S^2=S^3\cap [w=0]$.

Next, we describe the $2$-dimensional simplices.
First, we consider the intersections $E_1\cap E_2\cap \textit{ex}^+_{1,3}$ and $E_2\cap E_3\cap \textit{ex}^+_{1,3}$.
The intersection $E_1\cap E_2 \cap E_3 =E_{1,3}^+\cap E_2$ is nonempty and irreducible in $\widetilde{\overline{\mathcal{R}_{5,\boldsymbol{k}}}}$.
We take the blowing up along $E_{1,3}^+$.
Then, the intersections $E_1\cap E_2\cap \textit{ex}^+_{1,3}$ and $E_2\cap E_3\cap \textit{ex}^+_{1,3}$ are irreducible.
Second, we consider the intersections $E_1\cap \textit{ex}^+_{1,3} \cap \textit{ex}^-_{1,3}$ and $E_3\cap \textit{ex}^+_{1,3} \cap \textit{ex}^-_{1,3}$.
We substitute $d_2=0$ for the matrices (\ref{D13+}).
Then, we have that $D^{c_2\neq 0,+}_{1,3}\cap [d_2=0]$ is irreducible.
Therefore, the intersection $E^+_{1,3}\cap E_{1,3}^-$ is irreducible.
We take the blowing up along $E_{1,3}^+$.
Then, the intersections $E_1\cap \textit{ex}^+_{1,3} \cap \textit{ex}^-_{1,3}$ and $E_3\cap \textit{ex}^+_{1,3} \cap \textit{ex}^-_{1,3}$ are irreducible.
Then, we glue together the triangles 
\begin{equation*}
(E_1,E_2,\textit{ex}^+_{1,3}), (E_2,E_3,\textit{ex}^+_{1,3}),  (E_1,\textit{ex}^+_{1,3},\textit{ex}^-_{1,3} )\mbox{ and } (E_3,\textit{ex}^+_{1,3},\textit{ex}^-_{1,3} )
\end{equation*}
in the graph of Figure 3.
In the same way as above, we glue together each triangle.
Then, we obtain that the complex of Figure 3 is a simplicial decomposition of $S^2$.
Third, we consider the intersection of 3-tuple of components of the boundary divisor containing $\textit{ex}_1$ or $\textit{ex}_2$.
The divisors $\textit{ex}_1$ and $\textit{ex}_2$ are the exceptional divisors of the blowing up along the orbits of $s_1$ and $s_2$.
The orbits of $s_1$ and $s_2$ are contained in $D_i \cap D_{i+1}$ ($i=1,\ldots,4$), $D_{1,3}^{+}$, $D_{1,3}^{-}$, $D_{2,4}^{+}$ and $D_{2,4}^{-}$, respectively.
Then, the intersections $E_i\cap E_{i+1}\cap \textit{ex}_j$, $E^+_{k,k+2}\cap \textit{ex}_j$ and $E^-_{k,k+2}\cap \textit{ex}_j$ are nonempty and irreducible 
for $i=1,\ldots,4$, $j=1,2$, and $k=1,2$.
We take the blowing up along $E_{1,3}^+$ and $E_{1,3}^-$.
Then, we can glue together the $3$-tuples which have either $\textit{ex}_i$ or $\textit{ex}_i$ in the graph. 

Lastly, we describe the $3$-dimensional simplices.
We can glue together the $4$-tuples of components of the boundary divisor 
such that the $4$-tuples have either $\textit{ex}_i$ or $\textit{ex}_i$ and $3$-tuples expect $\textit{ex}_i$ or $\textit{ex}_i$ are glued together.
On the other hand, the intersections of the $4$-tuples which have the vertexes expect $\textit{ex}_i$ or $\textit{ex}_i$ are empty.
Then, we obtain that the boundary complex of the compactification $\widetilde{X}$ of $\mathcal{R}_{5,\boldsymbol{k}}$ is simplicial decomposition of $S^3$.
\end{proof}


\begin{thebibliography}{99}

\bibitem{abc} M. S. M. Benjamin,
{\it Compactifications of a representation variety},
 J. Group Theory {\bf 14} (2011), no. 6, 947-963.

\bibitem{CHM}
M. A. A. de Cataldo, T. Hausel, L. Migliorini, 
{\it Topology of Hitchin systems and Hodge theory of character varieties: the case $A_1$}, 
Ann. of Math. {\bf 175} (2012), no. 3, 1329-1407.


\bibitem{For}
E. Formanek, 
{\it The invariants of $n\times n$ matrices. Invariant theory}, 
18-43, Lecture Notes in Math., 1278, Springer, Berlin, 1987. 

\bibitem{FK}
R. Fricke, F. Klein,
{\it Vorlesungen uber die Theorie der automorphen Funktionen. Band 1: Die gruppentheoretischen Grundlagen. Band II: Die funktionentheoretischen Ausfuhrungen und die Andwendungen}.
Bibliotheca Mathematica Teubneriana, Bande 3, 4 Johnson Reprint Corp., New York; B. G. Teubner Verlagsgesellschaft, Stuttg art 1965 Band I: xiv+634 pp.; Band II: xiv+668 pp. 



\bibitem{HLR}
T. Hausel, E. Letellier, F. Rodriguez-Villegas, 
{\it Arithmetic harmonic analysis on character and quiver varieties},
Duke Math. Journal, vol. {\bf 160} (2011) 323-400.

\bibitem{HR}
T. Hausel, F. Rodriguez-Villegas,
{\it Mixed Hodge polynomials of character varieties},
with an appendix by N. M. Katz, Invent. Math. {\bf 174} (2008), 555-624.

\bibitem{IIS}
M. Inaba, K. Iwasaki, M.-H. Saito, 
{\it Moduli of stable parabolic connections, Riemann-
Hilbert correspondence and geometry of Painlev\'e equation of type VI. I },
Publ. Res. Inst. Math. Sci. (2006), no. {\bf 4}, 987-1089.

\bibitem{IS}
M. Inaba, M.-H. Saito, 
{\it Moduli of regular singular parabolic connections of spectral type on smooth projective curves}.
In preparation.

\bibitem{IIS2}
M. Inaba, K. Iwasaki, M.-H. Saito, 
{\it Moduli of stable parabolic connections, Riemann-Hilbert correspondence and geometry of Painlev\'e equation of type VI. II}.
Moduli spaces and arithmetic geometry, 387-432, Adv. Stud. Pure Math., {\bf 45}, Math. Soc. Japan, Tokyo, 2006.

\bibitem{Iwa} K. Iwasaki,
{\it An area-preserving action of the modular group on cubic Surfaces and the Painlev\'e VI Equations }, Comm. Math. Phys., {\bf 242} (2003), 185-219.


\bibitem{Jim}
M. Jimbo,
{\it Monodromy problem and the boundary condition for some Painlev\'e equations}.
Publ. Res. Inst. Math. Sci. {\bf 18} (1982), no. 3, 1137-1161. 


\bibitem{Kir}
F. Kirwan,
{\it Partial desingularisations of quotients of nonsingular varieties and their Betti numbers}, 
Ann. of Math. {\bf 122} (1985), no. 1, 41-85.

\bibitem{Lawton}
S. Lawton,
{\it Generators, relations and symmetries in pairs of $3\times3$ unimodular matrices }, J. Algebra {\bf 313} (2) (2007) 782-801.

\bibitem{MFK} D. Mumford, J. Fogarty, F. Kirwan, 
{\it Geometric invariant theory}, Third edition. Ergebnisse der Mathematik und ihrer Grenzgebiete (2) , 34. Springer-Verlag, Berlin, 1994.


\bibitem{Newstead}
P. E. Newstead, 
{\it Introduction to moduli problems and orbit spaces}.
Tata Institute of Fundamental Research Lectures on Mathematics and Physics, 51. Tata Institute of Fundamental Research, Bombay; by the Narosa Publishing House, New Delhi, 1978. vi+183 pp.


\bibitem{Pay}
S. Payne,
{\it Boundary complexes and weight filtrations}.
arXiv:1109.4286.

\bibitem{Pro} 
C. Procesi,
{\it The invariant theory of $n\times n$ matrices}, 
Advances in Math. {\bf 19} (1976), no. 3, 306-381.

\bibitem{Sim90}
C. Simpson,
{\it Harmonic bundles on noncompact curves}.
J. Amer. Math. Soc. {\bf 3} (1990), no. 3, 713--770. 

\bibitem{Sim}
C. Simpson,
{\it Moduli of representations of the fundamental group of a smooth projective variety. I},
Inst. Hautes \'Etudes Sci. Publ. Math. No. {\bf 79} (1994), 47-129.

\bibitem{Sim2}
C. Simpson,  
{\it Moduli of representations of the fundamental group of a smooth projective variety. II}, 
Inst. Hautes \'Etudes Sci. Publ. Math. No. {\bf 80} (1994), 5-79 (1995).

\bibitem{Sim3}
C. Simpson,
{\it Towards the boundary of the character variety},
Recent progress in the theory of Painlev\'e equations: algebraic, asymptotic and topological aspects, CNRS-JSPS, IRMA, Strasbourg, 4--8 november 2013.

\bibitem{Ste}
D. A. Stepanov, 
{\it A remark on the dual complex of a resolution of singularities}. 
 Uspekhi Mat. Nauk {\bf 61} (2006), no. 1(367), 185--186.

\bibitem{Thu}
A. Thuillier,
{\it G\'eom\'etrie toro\"\i dale et g\'eom\'etrie analytique non archim\'edienne. {A}pplication au type d'homotopie de certains sch\'emas formels}.
Manuscripta Math. {\bf 123} (2007), no. 4, 381--451.


\end{thebibliography}
\end{document}